\newtheorem{theorem}{Theorem}[section]
\theoremstyle{plain}
\newtheorem{conjecture}[theorem]{Conjecture}
\newtheorem{corollary}[theorem]{Corollary}
\newtheorem{lemma}[theorem]{Lemma}
\newtheorem{proposition}[theorem]{Proposition}
\theoremstyle{definition}
\newtheorem{definition}[theorem]{Definition}
\newtheorem{notation}[theorem]{Notation}
\theoremstyle{remark}
\begin{document}
\title{On the conjectures of Vojta and Campana over function fields with explicit exceptional sets}
\author{Natalia Garcia-Fritz}
\address{Facultad de Matematicas, Pontificia Universidad Catolica de Chile\newline 
\indent Campus San Joaquin, Avenida Vicuna Mackenna 4860\newline
\indent Santiago, Chile}
\email{natalia.garcia@mat.uc.cl}
\thanks{Acknowledgements: The author was supported by the grants ANID Fondecyt regular 1211004, ANID Fondecyt Iniciacion en Investigacion 11190172, and ANID PAI 79170039}
\subjclass[2020]{11J97,14H05,14C21}
\keywords{Vojta's conjectures, Campana's conjectures, function fields.}

\begin{abstract} We prove new cases of Vojta's conjectures for surfaces in the context of function fields, with truncation equal to one and providing an effective explicit description of the exceptional set. We also prove a general and explicit result towards Campana's conjecture over complex function fields of curves. Our methods rely on a local study of $\omega$-integral varieties.
\end{abstract}

\maketitle
\vspace{-1mm}


\section{Introduction}

In his thesis, Vojta \cite{Voj87} explored the relation between Nevanlinna Theory and results in Diophantine Approximation.
Motivated by this analogy, in parallel he formulated conjectures for number fields, function fields of complex curves, and complex meromorphic functions, which imply several deep results and conjectures in these areas. In the function field case, they involve a comparison between the \emph{height function} and a \emph{counting function} of a nonconstant morphism from a curve to a variety.
There is another version of these conjectures where the counting function is \emph{truncated at level one}, also proposed by Vojta (see Section 23 in \cite{Voj91}). This sharper version of the conjectures is considerably more difficult, see \cite{Voj98,Yam04,Gas10}. 

In all of its versions, Vojta's conjecture allows the existence of an \emph{exceptional subset} $Z$ where the conjectured height inequality does not need to hold, although it gives no hint of what this proper Zariski-closed subset should be. Finding the exceptional set is often a key problem. In the function field setting, for instance, let us mention that the analogue of the Schmidt subspace theorem has been obtained with an effective exceptional set by work of Wang \cite{Wan04}, but in general, results of this form are not common, specially in the case of truncation one.

In this work we prove new cases of Vojta's conjecture with truncated counting functions in the context of complex function fields (Theorem \ref{vojta}) with an effective explicit description of the exceptional set. We also prove a result towards Campana's conjecture over complex function fields of curves (Theorem \ref{Camp}) with an explicit exceptional set. We obtain Theorems \ref{vojta} and \ref{Camp} from the following result about $\omega$-integral curves for $\omega$ a section of the sheaf $(HS^m_{X/\mathbb{C}})_r$ of Hasse-Schmidt differential forms of order $m$ and degree $r$:

\begin{theorem}[Main Theorem]\label{main}
Let $C/\mathbb{C}$ be a smooth projective curve of geometric genus $g(C)$ and let $X/\mathbb{C}$ be a smooth projective variety of dimension at least two. Let $\phi:C\to X$ be a nonconstant morphism. Let $\mathcal{L}$ be an invertible sheaf on $X$, let $m,r>0$ be integers, let $\omega\in H^0(X,\mathcal{L}\otimes (HS^m_{X/\mathbb{C}})_r)$, and let $D_1,\ldots,D_q\subset X$ be smooth $\omega$-integral hypersurfaces such that $D=\sum_{i=1}^q D_i$ is a strict normal crossings divisor. If $\phi(C)$ is not contained in $\mathrm{supp}(D)$ and
\begin{eqnarray}\label{lades}
h_{X,\mathcal{O}(D)}(\phi)-h_{X,\mathcal{L}}(\phi)>\sum_{i=1}^q N^{(m)}(D_i,\phi)+2r\max\{0,g(C)-1\},
\end{eqnarray}
then $\phi(C)$ is an $\omega$-integral curve.
\end{theorem}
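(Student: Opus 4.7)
The strategy is to pull back $\omega$ to a section on $C$, compute the degree of the ambient line bundle, bound below the vanishing order of this pullback in terms of the intersection multiplicities with the $D_i$, and conclude by comparison with \eqref{lades}. If $\phi(C)$ were not $\omega$-integral then by definition $\phi^*\omega \neq 0$; the degree count will show that this contradicts \eqref{lades}, forcing $\phi^*\omega = 0$.

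First, since $C$ is a smooth curve mapping nonconstantly to $X$ with $\phi(C) \not\subset \mathrm{supp}(D)$, there is a canonical pullback sending $\omega$ to a section $\phi^*\omega$ of a line bundle $\mathcal{M}$ on $C$. Using the natural map $\phi^*(HS^m_{X/\mathbb{C}})_r \to (HS^m_{C/\mathbb{C}})_r$ together with the identification of the latter on a smooth curve with (a quotient of) $(\Omega^1_C)^{\otimes r}$, one obtains
\begin{equation*}
\deg \mathcal{M} \;\leq\; h_{X,\mathcal{L}}(\phi) + r(2g(C)-2).
\end{equation*}
When $g(C) = 0$ this reads $h_{X,\mathcal{L}}(\phi) - 2r$, still bounded above by $h_{X,\mathcal{L}}(\phi) + 2r\max\{0, g(C)-1\}$, so the RHS of \eqref{lades} dominates this estimate in every genus.

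The central local step is the vanishing bound
\begin{equation*}
  \mathrm{ord}_p(\phi^*\omega) \;\geq\; \max\bigl\{0,\, \mathrm{ord}_p(\phi^*D_i) - m\bigr\}
\end{equation*}
at each $p \in C$ with $\phi(p) \in D_i$. In local coordinates near $\phi(p)$ where $D_i = \{f = 0\}$, the $\omega$-integrality of $D_i$ allows $\omega$ to be expressed in terms of $f$ and its iterated Hasse--Schmidt derivatives of order at most $m$; after pullback by $\phi$, each derivative of order $j$ contributes vanishing of order at least $\mathrm{ord}_p(\phi^*f) - j$ at $p$, and minimizing over $j \leq m$ gives the asserted bound. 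Strict normal crossings of $D$ together with $\phi(C) \not\subset \mathrm{supp}(D)$ guarantee that at a point $p$ where several $D_i$ meet, the contributions add without interference. Summing over all $p$ and $i$ then yields
\begin{equation*}
  \deg \mathcal{M} \;\geq\; \sum_{p \in C} \mathrm{ord}_p(\phi^*\omega) \;\geq\; \sum_{i=1}^q \bigl(N(D_i,\phi) - N^{(m)}(D_i,\phi)\bigr) \;=\; h_{X,\mathcal{O}(D)}(\phi) - \sum_{i=1}^q N^{(m)}(D_i,\phi),
\end{equation*}
provided $\phi^*\omega \neq 0$. Combining with the degree upper bound on $\mathcal{M}$ contradicts \eqref{lades}, so $\phi^*\omega = 0$, i.e. $\phi(C)$ is $\omega$-integral.

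The main obstacle I expect is the local vanishing estimate: making precise the Hasse--Schmidt analogue of ``$\omega$-integrality of $D_i$ forces $\omega$ into the ideal of $D_i$ modulo order-$m$ corrections'' and tracking how this interacts with the intersection multiplicity under pullback. This is presumably where the paper's announced ``local study of $\omega$-integral varieties'' does its work; once that lemma is in hand, the rest of the argument is a formal degree count combined with SNC additivity.
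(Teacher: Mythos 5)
Your overall architecture matches the paper's: a local vanishing estimate at points of high contact with the $D_i$ (this is the paper's Theorem \ref{dosvar}, and your sketch of it --- each term of $\omega$ lies in the ideal generated by a local equation $f$ of $D_i$ or by some $d_jf$, and $d_j(ut^c)$ is divisible by $t^{c-j}$ --- is the right idea), followed by a global obstruction coming from inequality \eqref{lades}. But two steps as written would fail. First, for $m>1$ the sheaf $\phi^*\mathcal{L}\otimes(HS^m_{C/\mathbb{C}})_r$ is \emph{not} a line bundle: on a smooth curve $(HS^m_C)_r$ is locally free of rank equal to the number of monomials $\prod_i (d_it)^{n_i}$ with $\sum_i i\,n_i=r$, which exceeds $1$ once $m\ge 2$ (e.g.\ $(HS^2_C)_2$ has rank $2$), and it is not a quotient of $(\Omega^1_C)^{\otimes r}$ --- rather $S^r\Omega^1_C=(HS^1_C)_r$ is the bottom piece of the Green--Griffiths filtration of $(HS^m_C)_r$. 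Consequently your key inequality $\deg\mathcal{M}\ge\sum_p\mathrm{ord}_p(\phi^*\omega)$ is not available: for a higher-rank bundle the degree does not bound the vanishing order of a single nonzero section (consider $\mathcal{O}(n)\oplus\mathcal{O}(-n)$ on $\mathbb{P}^1$). The paper avoids this by not taking degrees of $(HS^m_C)_r$ at all: it twists by $\mathcal{O}(-S)$ where $S$ is the vanishing divisor produced by the local estimate, and proves the vanishing $H^0(C,\mathcal{O}(-S)\otimes\phi^*\mathcal{L}\otimes(HS^m_C)_r)=0$ by induction on $m$ using the filtration with graded pieces $S^i\Omega^1_C\otimes(HS^{m-1}_C)_{r-mi}$ (Lemmas \ref{elgen1}--\ref{elgen2}, Proposition \ref{corogen}); the hypothesis needed is exactly $\deg(\mathcal{O}(-S)\otimes\phi^*\mathcal{L})<2r\min\{0,1-g(C)\}$, which is what \eqref{lades} supplies. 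Your argument is correct as stated only for $m=1$.

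Second, ``if $\phi(C)$ is not $\omega$-integral then $\phi^*\omega\neq 0$'' is not true by definition. $\omega$-integrality of $\phi(C)$ is defined via the normalization $\nu_{\phi(C)}\colon\widetilde{\phi(C)}\to X$, whereas you are pulling back along $\phi$, which factors as $\nu_{\phi(C)}\circ\tilde\phi$ with $\tilde\phi\colon C\to\widetilde{\phi(C)}$ possibly of degree $>1$ and ramified. One must show that $\tilde\phi^\bullet$ is injective on the relevant spaces of global sections, i.e.\ that a nonzero section of $\nu_{\phi(C)}^*\mathcal{L}\otimes(HS^m_{\widetilde{\phi(C)}})_r$ does not die after pullback by $\tilde\phi$; the paper does this by checking that the kernels of $\rho^{\tilde\phi}$ and of $\tilde\phi^{C/C'}$ have rank zero (the latter via the generically \'etale locus of $\tilde\phi$). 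This is a fixable but genuine missing step in your proposal. Finally, note that your local estimate is only sketched; the actual content (that $\omega$-integrality of $D_i$ forces every coefficient $a_{\mathbf h}$ of $\omega$ in adapted coordinates to be divisible by the local equation $x_i$ whenever the monomial $\mathbf h$ involves no $d_p(x_i)$) is established in the paper via the commutative diagrams of Section \ref{3} relating $f^\bullet$ to the stalk-level maps $v_{f,Q,r}$, together with Lemma \ref{li}.
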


Note that $N^{(m)}(D,\phi)\leq m N^{(1)}(D,\phi)$, so one also has a version of this result for $N^{(1)}(D,\phi)$ with a different constant.
We remark that Theorem \ref{main} implies a version of Noguchi \cite{Nog97} and Wang \cite{Wan96} result, when $X=\mathbb{P}^n$ and $\omega$ is a Wronskian. We work out the case $n=2$ in Corollary \ref{NWcor} for exposition purposes. 

Inequality \eqref{lades} follows from a condition that makes a sheaf to be of negative degree on curves that intersect the target divisor $D$ with high multiplicity, which forces a certain group of global sections to be trivial, thus obtaining $\omega$-integrality for these curves. We can ``lower'' the degree of the relevant sheaf by a careful local computation; see Theorem \ref{dosvar}. Then we translate our bound to an inequality between heights and truncated counting functions (see Definitions \ref{hf} and \ref{tcn}). 

From Theorem \ref{main} we obtain a version of Vojta's conjecture with a description of the exceptional set by carefully studying intersections between pairs of $\omega$-integral curves in a surface, with $\omega$ a reduced symmetric differential form (see Definition \ref{reduced}). Intersections between these curves outside of the \emph{discriminant} $\Delta(\omega)$ of our symmetric differential (see Definition \ref{defdisc}) are well understood \cite{V,Gar18}, and here we complete this analysis by studying intersections between curves at points of $\Delta(\omega)$. We restrict to the case when the set $DP_X(\omega)$ of \emph{degenerate points} (see Definition \ref{defdeg}) does not intersect the target divisor, as we can always remove these finitely many points via blow-ups; see Lemma \ref{lemdp}.

Given an ample invertible sheaf $\mathcal{A}$ and an invertible sheaf $\mathcal{L}$ over a variety $X$, we define
$$\sigma(\mathcal{A},\mathcal{L})=\inf\left\{\frac{a}{b}:a,b\in\mathbb{Z}_{>0},\ \mathcal{A}^{\otimes a}\otimes(\mathcal{L}^\vee)^{\otimes b} \mbox{ is ample}\right\}.$$
The result we prove is the following:

\begin{theorem}\label{vojta}
Let $X/\mathbb{C}$ be a smooth projective surface. Let $r\geq 1$, let $\mathcal{L}$ be an invertible sheaf on $X$, and let $\omega\in H^0(X,\mathcal{L}\otimes S^r\Omega^1_{X/\mathbb{C}})$ be a nonzero reduced form.
Let $D_1,\ldots,D_q$ be distinct ample, smooth, irreducible $\omega$-integral curves not contained in $\Delta(\omega)$ such that $D=\sum_{i=1}^qD_i$ is a normal crossings divisor and $D\cap DP_X(\omega)=\emptyset$.

There is a proper Zariski-closed set $Z_{\omega,D}\subseteq X$ with the following property: Let $\epsilon>0$, and suppose that $$q>\frac{1}{\epsilon}\max\{2,\max_{1\leq i\leq q}\sigma(\mathcal{O}_X(D_i),\mathcal{L})\}.$$ For every smooth projective curve $C/\mathbb{C}$ and every nonconstant map $\phi:C\to X$ with $\phi(C)\not\subseteq Z_{\omega,D}$, we have
\begin{equation}\label{1k} (1-\epsilon)h_{X,\mathcal{O}(D)}(\phi)< \sum_{j=1}^q N^{(1)}(D_j,\phi)+2r\max\{0,g(C)-1\}.\end{equation}
More precisely, we can take $Z_{\omega,D}$ as the union of all $\omega$-integral curves passing through a point $P\in X$ which is in $D_i\cap D_j$ for some $i\neq j$, or in $D_i\cap \Delta(\omega)$ for some $i$.
\end{theorem}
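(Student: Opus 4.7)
I would split the argument into two cases depending on whether $\phi(C)$ is $\omega$-integral, after first showing that $Z_{\omega,D}$ is a proper Zariski-closed subset. The bad-point set $(\bigcup_{i\neq j} D_i\cap D_j)\cup(\bigcup_i D_i\cap\Delta(\omega))$ is finite, being a finite union of intersections of curves on a surface. Since $\omega$ is reduced, only finitely many germs of $\omega$-integral curves pass through any fixed point (the local tangent directions correspond to the finitely many roots of the local expression of $\omega$), so only finitely many $\omega$-integral curves pass through each bad point; hence $Z_{\omega,D}$ is a finite union of curves. Each $D_j$ is itself $\omega$-integral and, by ampleness, meets some other $D_i$, so $D_j\subseteq Z_{\omega,D}$ for every $j$, and the assumption $\phi(C)\not\subseteq Z_{\omega,D}$ forces $\phi(C)\not\subseteq\mathrm{supp}(D)$.

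\emph{Case 1: $\phi(C)$ is not $\omega$-integral.} I apply the contrapositive of Theorem \ref{main} with $m=1$ to obtain
\[
h_{X,\mathcal{O}(D)}(\phi)-h_{X,\mathcal{L}}(\phi)\leq \sum_{i=1}^q N^{(1)}(D_i,\phi)+2r\max\{0,g(C)-1\}.
\]
To bound $h_{X,\mathcal{L}}(\phi)$, I invoke the definition of $\sigma(\mathcal{O}_X(D_i),\mathcal{L})$: for any $\sigma'>\max_i\sigma(\mathcal{O}_X(D_i),\mathcal{L})$, one can pick positive integers $a,b$ with $a/b<\sigma'$ and $\mathcal{O}_X(D_i)^{\otimes a}\otimes\mathcal{L}^{\vee\otimes b}$ ample, whence $h_{X,\mathcal{L}}(\phi)\leq \sigma'\,h_{X,\mathcal{O}(D_i)}(\phi)+O(1)$ for each $i$. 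Summing over $i$ and using that $q>\sigma'/\epsilon$ (available by choosing $\sigma'$ close enough to the maximum) yields $h_{X,\mathcal{L}}(\phi)<\epsilon\,h_{X,\mathcal{O}(D)}(\phi)+O(1)$, from which \eqref{1k} follows.

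\emph{Case 2: $\phi(C)$ is $\omega$-integral.} Set $C':=\phi(C)$, let $\tilde{C}'$ be its normalization, and let $\psi\colon C\to\tilde{C}'$ be the factor map of some degree $d$. Since $\phi(C)\not\subseteq Z_{\omega,D}$, the curve $C'$ avoids every bad point, so each intersection point $x\in C'\cap D_i$ lies in a single $D_i$ and outside $\Delta(\omega)$. By the known transversality of distinct $\omega$-integral curves at points outside $\Delta(\omega)$ \cite{V,Gar18}, $C'$ meets each $D_i$ transversally at such $x$, so at the unique lift $\tilde{x}\in\tilde{C}'$ the pulled-back divisor has multiplicity $1$; the fiber $\psi^{-1}(\tilde{x})\subset C$ then carries total $\phi^*D_i$-multiplicity $d$ but contributes only $|\psi^{-1}(\tilde{x})|$ points to $N^{(1)}$. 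Summing over all intersection points and all $i$ (these lifts are pairwise distinct on $\tilde{C}'$, since $C'$ avoids the $D_i\cap D_j$),
\[
h_{X,\mathcal{O}(D)}(\phi)-\sum_{i=1}^q N^{(1)}(D_i,\phi)\leq R,
\]
where $R$ is the total ramification of $\psi$. Riemann--Hurwitz gives $R\leq 2g(C)-2+2d$ (the $2d$ only entering when $g(\tilde{C}')=0$). Since each $D_i$ is ample and $C'\not\subseteq D_i$, $h_{X,\mathcal{O}(D)}(\phi)=d(C'\cdot D)\geq dq$; the hypothesis $q\epsilon>2$ absorbs $2d\leq\epsilon\,h_{X,\mathcal{O}(D)}(\phi)$, and $r\geq 1$ gives $2g(C)-2\leq 2r\max\{0,g(C)-1\}$. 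Rearranging yields \eqref{1k}.

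\emph{Main obstacle.} The subtle step is the ramification bookkeeping in Case 2: translating the transverse intersections on $X$ into $N^{(1)}$ contributions on $C$ via the normalization, and then absorbing both the Riemann--Hurwitz deficit and the gap between $N^{(1)}$ and $h$ into the $\epsilon h$ and $2r(g(C)-1)$ terms through the specific form of the hypothesis on $q$. This is where the ``$2$'' in $\max\{2,\max_i\sigma(\mathcal{O}_X(D_i),\mathcal{L})\}$ is used, precisely to accommodate the possibility $g(\tilde{C}')=0$. A secondary technical point is the finiteness of $\omega$-integral curves through each bad point, required for $Z_{\omega,D}$ to be Zariski-closed, which rests on the reducedness of $\omega$.
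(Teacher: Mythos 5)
Your argument is, in substance, the paper's proof: the paper runs your two cases as a single argument by contradiction (from the failure of \eqref{1k} and the ampleness of $\mathcal{O}(D_i)^{\otimes a}\otimes(\mathcal{L}^\vee)^{\otimes b}$ it deduces that Theorem \ref{main} applies, so $\phi(C)$ is $\omega$-integral, and then uses Lemma \ref{lemth}, Proposition \ref{eio} and Riemann--Hurwitz exactly as in your Case 2); the decomposition, the key lemmas, and the way $\epsilon q>2$ and $r\geq 1$ absorb the degree and genus terms all coincide.

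Two points need repair. First, your justification for the properness of $Z_{\omega,D}$ --- ``since $\omega$ is reduced, only finitely many germs of $\omega$-integral curves pass through any fixed point'' --- is false as stated: reducedness only controls points outside $\Delta(\omega)$, where Lemma \ref{lemth} gives at most $r$ curves by the tangent-direction count you describe. At a point of $\Delta(\omega)$ the local factorization degenerates and infinitely many $\omega$-integral curves may pass through; this is precisely what Definition \ref{defdeg} and Lemma \ref{lemdp} are for. The bad points in $D_i\cap\Delta(\omega)$, and possibly some of those in $D_i\cap D_j$, do lie on $\Delta(\omega)$, and finiteness there comes from the hypothesis $D\cap DP_X(\omega)=\emptyset$ (which you never invoke), i.e.\ from the very definition of a non-degenerate point. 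Second, the $O(1)$ in your Case 1 bound $h_{X,\mathcal{L}}(\phi)\leq\sigma'\,h_{X,\mathcal{O}(D_i)}(\phi)+O(1)$ should not be there: with an honest additive constant you could only conclude \eqref{1k} up to $O(1)$, which is weaker than the statement. In the function field setting the ampleness of $\mathcal{O}(D_i)^{\otimes a}\otimes(\mathcal{L}^\vee)^{\otimes b}$ gives the exact inequality $b\,h_{X,\mathcal{L}}(\phi)<a\,h_{X,\mathcal{O}(D_i)}(\phi)$, since the degree of the pullback of an ample sheaf under a nonconstant map is positive; drop the $O(1)$ and your argument closes as in the paper.
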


The proof of Theorem \ref{vojta} gives an explicit and effective construction for the set $Z_{\omega,D}$. We illustrate it in the following particular case:

\begin{corollary}\label{quad2}
Let $\epsilon>0$ and let $L_1,\ldots,L_q\subseteq\mathbb{P}^2_\mathbb{C}$ be lines in a quadratic family with $q>4/\epsilon$. Write $D=\sum_{i=1}^q L_i$. 
Let $Y$ be the envelope of this quadratic family of lines, which is a conic in $\mathbb{P}^2$. For every smooth projective curve $C/\mathbb{C}$ and nonconstant morphism $\phi:C\to\mathbb{P}^2$ whose image is not contained in $Y\cup D$, we have
\begin{eqnarray*}
(1-\epsilon)h_{\mathbb{P}^2,\mathcal{O}(D)}(\phi)< \sum_{j=1}^q N^{(1)}(L_j,\phi)+4\max\{0,g(C)-1\}.\end{eqnarray*}
\end{corollary}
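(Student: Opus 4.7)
The plan is to construct a reduced symmetric form $\omega$ of degree $r=2$ adapted to the quadratic family of lines and apply Theorem \ref{vojta}. A quadratic family of lines corresponds to a degree-two rational map $\mathbb{P}^1\to(\mathbb{P}^2)^\vee$, $t\mapsto L_t$. Through a generic point $P\in\mathbb{P}^2$ there pass exactly two members $L_{t_1(P)},L_{t_2(P)}$ of the family; multiplying the two linear $1$-forms annihilating their tangent directions at $P$ yields a pointwise quadratic form that globalizes to a nonzero reduced section $\omega\in H^0(\mathbb{P}^2,\mathcal{O}(4)\otimes S^2\Omega^1_{\mathbb{P}^2})$. (An explicit check in an affine chart---e.g.\ for the family $y=tx+t^2$ one obtains $\omega=dy^2+x\,dx\,dy-y\,dx^2$, whose homogenization has coefficients of degree $2$ in $X,Y,Z$, matching the twist $\mathcal{O}(4)$.) By construction the $\omega$-integral curves are exactly the lines $\{L_t\}$ together with the envelope $Y$ (at each point of $Y$ the two linear factors of $\omega$ coincide and annihilate the tangent to $Y$), and $\Delta(\omega)=Y$.

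Next I would verify the hypotheses of Theorem \ref{vojta}. Each $L_i$ is $\omega$-integral and, being a tangent line to the smooth conic $Y$, satisfies $L_i\not\subseteq Y=\Delta(\omega)$. Distinct lines form a normal crossings divisor. The set $DP_{\mathbb{P}^2}(\omega)$ is finite; any degenerate points lying on $D$ can be removed via the blow-up procedure of Lemma \ref{lemdp} without affecting the final conclusion on $\mathbb{P}^2$. Since $\mathcal{O}_{\mathbb{P}^2}(L_i)=\mathcal{O}(1)$ and $\mathcal{L}=\mathcal{O}(4)$, one computes $\sigma(\mathcal{O}(1),\mathcal{O}(4))=4$, so $\max\{2,\sigma\}=4$ and the hypothesis $q>4/\epsilon$ agrees. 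The error term $2r\max\{0,g(C)-1\}$ equals $4\max\{0,g(C)-1\}$, as stated.

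Finally I would identify $Z_{\omega,D}$. By Theorem \ref{vojta} it is the union of $\omega$-integral curves through points of $L_i\cap L_j$ ($i\neq j$) or of $L_i\cap\Delta(\omega)=L_i\cap Y$. The global algebraic $\omega$-integral curves consist precisely of the lines $\{L_t\}$ and the envelope $Y$: any irreducible algebraic curve pointwise tangent to one of the two factors of $\omega$ must locally coincide with a leaf of that factor, and the leaves of each factor are just lines $L_t$, forcing the curve to be some $L_{t_0}$ or else the envelope. Through a generic intersection $L_i\cap L_j$ only $L_i$ and $L_j$ pass among all $L_t$, and $Y$ does not pass through it; through a tangency point $L_i\cap Y$, only $L_i$ itself and $Y$ pass. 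Hence $Z_{\omega,D}\subseteq D\cup Y=Y\cup D$, and the corollary follows from Theorem \ref{vojta}.

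The main obstacle is the classification of global algebraic $\omega$-integral curves---showing the 2-web defined by $\omega$ has no exotic algebraic leaves beyond the $L_t$ and $Y$. This is classical for a pencil of lines in $\mathbb{P}^2$, but the rigorous justification requires a direct tangential-envelope argument together with careful bookkeeping of $\mathcal{L}=\mathcal{O}(4)$ and of the finitely many degenerate points.
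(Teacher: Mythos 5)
Your proposal is correct and follows essentially the same route as the paper: the same section $\omega\in H^0(\mathbb{P}^2,\mathcal{O}(4)\otimes S^2\Omega^1_{\mathbb{P}^2})$ built from the two tangent directions of the family through a point, the identification $\Delta(\omega)=Y$, the classification of the $\omega$-integral curves as the lines $L_t$ together with the envelope (the paper does this via Lemma \ref{lemth}, which is precisely your web-leaf argument made rigorous), the computation $\sigma(\mathcal{O}(1),\mathcal{O}(4))=4$, and the identification $Z_{\omega,D}=Y\cup D$. The only superfluous step is the appeal to the blow-up procedure of Lemma \ref{lemdp}: once the $\omega$-integral curves are classified, $DP_{\mathbb{P}^2}(\omega)=\emptyset$, so the hypothesis $D\cap DP_{\mathbb{P}^2}(\omega)=\emptyset$ of Theorem \ref{vojta} holds outright and no blow-up (which would in any case require transferring the inequality back down to $\mathbb{P}^2$) is needed.
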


From Theorem \ref{main} we also obtain a result towards Campana's conjecture \cite{Cam05} in the context of function fields. Campana's conjecture has been formulated in great generality in \cite{AV18,Abr09} over number fields. Here the conjecture predicts algebraic degeneracy of Campana points over a number field under the assumption that $\mathcal{K}_X\otimes \mathcal{O}(D-D_{\overline{\epsilon}})$ is a big $\mathbb{Q}$-invertible sheaf. In the holomorphic case, Brotbek and Deng \cite{BD19} have recently proved Campana's conjecture when the target divisor is a general ample divisor of sufficiently large degree with respect to a fixed ample divisor. 
In the function field case \cite{RTW21} constructs examples of surfaces and divisors (analogous to \cite{CZ04}) where a version of Campana's conjecture holds. 
We now specify the notion of Campana curves that we will use:

Let $X/\mathbb{C}$ be a smooth variety of dimension $n\geq 1$. Let $D_1,...,D_q$ be different irreducible hypersurfaces of $X$ and let $D=\sum_{i=1}^q D_i$. Let $\epsilon_1,...,\epsilon_q>0$ be rational numbers, write $\overline{\epsilon} = (\epsilon_1,...,\epsilon_q)$ and define the $\mathbb{Q}$-divisor $D_{\overline{\epsilon}}=\sum_{i=1}^q \epsilon_jD_j$. Let $C\subseteq X$ be an irreducible curve and let $\nu_C:\tilde{C}\to X$ be its normalization. We say that $C$ is an $(X,D_{\overline{\epsilon}})$\emph{-Campana curve} if for every $1\leq j\leq q$ and every $P\in \tilde{C}$ we have that if $\mathrm{ord}_{P}(\nu_C^*E_{D_j,\nu_C(P)})\geq 1$ with $E_{D_j,\nu_C(P)}$ a local equation of $D_j$ at $\nu_C(P)$ then $\mathrm{ord}_{P}(\nu_C^*E_{D_j,\nu_C(P)})\geq 1/\epsilon_j$.
In other words, every time an $(X,D_{\overline{\epsilon}})$-Campana curve $C$ meets the components of $D$, it does it with large multiplicity controlled by the coefficients $\epsilon_j$. 

Let us state the version of Campana's conjecture over function fields that we consider. See Conjecture 8 in \cite{RTW21}, and see \cite{AV18} for the analogous statement over number fields.

\begin{conjecture}[Campana's conjecture over function fields] 
Let $X$ be a smooth projective variety over $\mathbb{C}$ and let $D$ be a strict normal crossings divisor on $X$ with irreducible components $D_1,...,D_q$. For each $1\le j\le q$ let $\epsilon_j>0$ be a rational number and let $\overline{\epsilon}=(\epsilon_1,...,\epsilon_q)$. Let $K_X$ be a canonical divisor on $X$. If the $\mathbb{Q}$-divisor $K_X+(D-D_{\overline{\epsilon}})$ is big, then there is a proper Zariski closed subset $Z_{\bar{\epsilon},\omega,D}\subseteq X$, an ample sheaf $\mathcal{A}$ on  $X$, and a constant $B$ depending on the previous data (in particular, on $\bar{epsilon}$ and $D$) such that for every $(X,D_{\overline{\epsilon}})$-Campana curve $C\subseteq X$ not contained in $Z_{\bar{\epsilon},\omega,D}$ we have
\begin{equation}\label{descamp}
h_\mathcal{A}(\nu_C)\leq B\max\{0, g(C)-1\}.
\end{equation}
\end{conjecture}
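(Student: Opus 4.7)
The plan is to deduce the conjecture from Theorem \ref{main} by using the Campana condition to bound truncated counting functions. Let $C$ be an $(X,D_{\bar{\epsilon}})$-Campana curve with normalization $\phi=\nu_C$. At each point $P\in\tilde{C}$ where $\mathrm{ord}_P(\phi^*E_{D_j})>0$, the Campana condition forces $\mathrm{ord}_P(\phi^*E_{D_j})\geq 1/\epsilon_j$, so for every integer $m\geq 1$,
$$\min\{m,\mathrm{ord}_P(\phi^*E_{D_j})\}\;\leq\; m\,\epsilon_j\,\mathrm{ord}_P(\phi^*E_{D_j}).$$
Summing over $P$ and using the First Main Theorem $N(D_j,\phi)\leq h_{X,\mathcal{O}(D_j)}(\phi)+O(1)$ should yield the key estimate
\begin{equation*}
\sum_{j=1}^q N^{(m)}(D_j,\phi)\;\leq\; m\cdot h_{X,\mathcal{O}(D_{\bar{\epsilon}})}(\phi)+O(1).
\end{equation*}

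With this in hand, I would reduce the conjecture to the construction of a nonzero global section $\omega\in H^0(X,\mathcal{L}\otimes (HS^m_{X/\mathbb{C}})_r)$, for some positive integers $m,r$ and some invertible sheaf $\mathcal{L}$, satisfying two properties: first, every component $D_j$ is $\omega$-integral; and second, $\mathcal{O}(D-mD_{\bar{\epsilon}})\otimes \mathcal{L}^{\vee}\otimes \mathcal{A}^{\vee}$ is pseudo-effective for some fixed ample sheaf $\mathcal{A}$ on $X$. Indeed, for any Campana curve $\phi$ whose image is not $\omega$-integral, the contrapositive of Theorem \ref{main} together with the estimate above would give
\begin{equation*}
h_{X,\mathcal{O}(D)}(\phi)-h_{X,\mathcal{L}}(\phi)\;\leq\; m\cdot h_{X,\mathcal{O}(D_{\bar{\epsilon}})}(\phi)+2r\max\{0,g(C)-1\}+O(1),
\end{equation*}
and the pseudo-effectivity assumption would rearrange this into $h_{X,\mathcal{A}}(\phi)\leq B\max\{0,g(C)-1\}+O(1)$ with $B=2r$. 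The exceptional set $Z_{\bar{\epsilon},\omega,D}$ would then be taken as the union of all $\omega$-integral proper subvarieties of $X$, completed if necessary by any components of $\mathrm{supp}(D)$ that fall outside the scope of the construction of $\omega$.

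The main obstacle, and the reason the conjecture remains open in this generality, is the construction of $\omega$. What one needs is a nonzero global Hasse--Schmidt form on $X$ whose zero locus contains each $D_j$ (ensuring integrality), living in a sheaf twisted negatively enough that $\mathcal{O}(D-mD_{\bar{\epsilon}})\otimes \mathcal{L}^{\vee}$ dominates an ample class. The natural strategy is to seek $\omega$ as a section of a logarithmic Hasse--Schmidt sheaf $\bigl(HS^m_{X/\mathbb{C}}(\log D)\bigr)_r\otimes \mathcal{O}(-mD_{\bar{\epsilon}})\otimes \mathcal{A}^{\vee}$, and to produce such a section for sufficiently large $m,r$ via a Bogomolov--Miyaoka--Yau- or Brotbek--Deng-type argument that leverages bigness of $K_X+D-D_{\bar{\epsilon}}$; the logarithmic structure along $D$ should then automatically enforce $\omega$-integrality of the $D_j$. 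Executing this step in full generality is exactly the heart of the matter, and it is what forces the paper's Theorem \ref{Camp} to operate under hypotheses under which such an $\omega$ can be exhibited explicitly.
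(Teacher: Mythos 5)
The statement you set out to prove is labelled as a \emph{conjecture} in the paper, and it is genuinely open: the paper does not prove it, and neither do you. What the paper proves is Theorem \ref{Camp}, a conditional result for surfaces in which the existence of a suitable reduced form $\omega$ with $\omega$-integral boundary components is taken as a \emph{hypothesis}. Your reduction of the conjecture to the construction of such an $\omega$ runs along essentially the same lines as the paper's proof of Theorem \ref{Camp}: the estimate $\sum_j N^{(m)}(D_j,\phi)\le m\,h_{X,\mathcal{O}(D_{\bar\epsilon})}(\phi)$ coming from the Campana condition (with no $O(1)$ needed, since over function fields $N(D_j,\phi)=h_{X,\mathcal{O}(D_j)}(\phi)$ exactly), combined with the contrapositive of Theorem \ref{main} and Kodaira's lemma applied to the big class, is precisely how the paper gets the constant $B=2Mr$. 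So, to the extent your argument works, it recovers the conditional theorem rather than the conjecture; you correctly identify the construction of $\omega$ as the open heart of the matter, but your suggestion that a logarithmic Hasse--Schmidt section would ``automatically'' be integral along the $D_j$ is speculation, not a proof.

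There is also a concrete flaw in your proposed exceptional set. Taking $Z_{\bar\epsilon,\omega,D}$ to be the union of \emph{all} $\omega$-integral proper subvarieties of $X$ does not in general produce a proper Zariski-closed subset: for a fixed $\omega$ there are typically infinitely many $\omega$-integral curves, and they can be Zariski dense (for instance, every line in $\mathbb{P}^2$ is integral for the Wronskian form of Corollary \ref{NWcor}). The paper avoids this by a separate argument that your sketch omits: an $\omega$-integral Campana curve must meet $D$, and by Lemma \ref{lemth} its intersections with the $D_j$ outside $\Delta(\omega)$ are transversal, contradicting the Campana multiplicity requirement $\ge 1/\epsilon_j>m\ge 1$; hence such a curve can meet $D$ only inside $\Delta(\omega)$ and therefore lies in the \emph{finite} union $V_{\omega,D}$ of $\omega$-integral curves through $D\cap\Delta(\omega)$. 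This step, which relies on $\omega$ being reduced, on $\epsilon_j<1/m$, and on $D\cap DP_X(\omega)=\emptyset$, is needed even in the conditional setting to make the exceptional set proper, and it is missing from your proposal.
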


Note that the height inequality \eqref{descamp} is precisely the one required in Demailly's notion of algebraic hyperbolicity \cite{Dem97}.
In the direction of this conjecture, we prove:

\begin{theorem}\label{Camp} 
Let $X/\mathbb{C}$ be a smooth projective surface. Let $\mathcal{L}$ be an invertible sheaf on $X$ associated to a divisor $D_\mathcal{L}$, let $m,r$ be positive integers, let $\omega\in H^0(X,\mathcal{L}\otimes(HS^m_{X/\mathbb{C}})_r)$ be a non-zero reduced form, and let $D_1,...,D_q$ be irreducible smooth ample $\omega$-integral curves not contained in $\Delta(\omega)$ such that $D=\sum_{i=1}^q D_i$ is a strict normal crossings divisor with $D\cap DP_X(\omega)=\emptyset$. 

For each $1\leq j\leq q$ let $\frac{1}{m}>\epsilon_j>0$ be rational numbers and let $\overline{\epsilon}=(\epsilon_1,...,\epsilon_q)$. If $-D_\mathcal{L}+(D-D_{m\overline{\epsilon}})$ is big, then there is a proper Zariski closed subset $Z_{\omega,D}\subseteq X$, an ample sheaf $\mathcal{A}$ on  $X$, and a constant $B$ depending on the previous data such that for every $(X,D_{\overline{\epsilon}})$-Campana curve $C\subseteq X$ not contained in $Z_{\bar{\epsilon},\omega,D}$ we have
$$
h_\mathcal{A}(\nu_C)\leq B\max\{0, g(C)-1\}.
$$
In particular, all $(X,D_{\bar{\epsilon}})$-Campana curves of geometric genus $0$ or $1$ are contained in $Z_{\bar{\epsilon},\omega,D}$.

More concretely, the sheaf $\mathcal{A}$, the set $Z_{\bar{\epsilon},\omega,D}$, and the constant $B$ can be chosen in the following explicit way: Take any integer $M\geq 1$, ample divisor $A$, and effective divisor $E$ such that $$M(-D_\mathcal{L} +(D-D_{m\bar{\epsilon}}))\sim A+E,$$ which is possible by Kodaira's Lemma. With these choices, we can take $B=2Mr$, $\mathcal{A}=\mathcal{O}(A)$ and
 $$Z_{\bar{\epsilon},\omega,D}=\Delta(\omega)\cup\mathrm{supp}(E)\cup\mathrm{supp}(D)\cup V_{\omega,D},$$ with $V_{\omega,D}$ the union of  the finitely many $\omega$-integral curves passing through the points $P\in D\cap \Delta(\omega)$.
\end{theorem}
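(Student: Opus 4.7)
The plan is to combine the Main Theorem (Theorem \ref{main}), the defining multiplicity property of Campana curves, and Kodaira's Lemma. Fix an $(X,D_{\bar{\epsilon}})$-Campana curve $C$ with $C\not\subseteq Z_{\bar{\epsilon},\omega,D}$ and let $\phi=\nu_C\colon \tilde C\to X$. I split into two cases depending on whether $\phi(\tilde C)$ is an $\omega$-integral curve.

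First, I would dispose of the case where $\phi(\tilde C)$ is $\omega$-integral. Since $\phi(\tilde C)\not\subseteq\mathrm{supp}(D)\subseteq Z_{\bar{\epsilon},\omega,D}$ and each $D_j$ is ample, $\phi(\tilde C)$ meets some $D_j$ at a point $P$; the Campana condition forces $\mathrm{ord}_{\tilde P}(\phi^*E_{D_j,P})\ge 1/\epsilon_j>m$. On the other hand, at a point $P\notin\Delta(\omega)$ the local theory of HS integral curves (the analogue of \cite{V,Gar18} for $(HS^m_{X/\mathbb{C}})_r$ at non-degenerate points, which is built into the discriminant set-up of the paper) bounds the intersection multiplicity of two distinct $\omega$-integral curves by $m$. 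Hence $P\in D_j\cap\Delta(\omega)$, and $\phi(\tilde C)$ is one of the finitely many $\omega$-integral curves through $D\cap\Delta(\omega)$, i.e.\ $\phi(\tilde C)\subseteq V_{\omega,D}\subseteq Z_{\bar{\epsilon},\omega,D}$, a contradiction. So from now on $\phi(\tilde C)$ is not $\omega$-integral.

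Second, I apply Theorem \ref{main} contrapositively: since $D$ is strict normal crossings and $\phi(\tilde C)\not\subseteq\mathrm{supp}(D)$, the failure of $\omega$-integrality forces
\begin{equation*}
h_{X,\mathcal{O}(D)}(\phi)-h_{X,\mathcal{L}}(\phi)\le\sum_{j=1}^qN^{(m)}(D_j,\phi)+2r\max\{0,g(C)-1\}.
\end{equation*}
Now I use Campana multiplicity: for every $\tilde P\in\tilde C$ with $\phi(\tilde P)\in D_j$ one has $\mathrm{ord}_{\tilde P}(\phi^*D_j)\ge 1/\epsilon_j>m$, so $\min\{m,\mathrm{ord}_{\tilde P}(\phi^*D_j)\}=m\le m\epsilon_j\,\mathrm{ord}_{\tilde P}(\phi^*D_j)$. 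Summing over $\tilde P$ yields $N^{(m)}(D_j,\phi)\le m\epsilon_j\,h_{X,\mathcal{O}(D_j)}(\phi)+O(1)$, and substituting gives
\begin{equation*}
h_{-D_\mathcal{L}+D-D_{m\bar{\epsilon}}}(\phi)\le 2r\max\{0,g(C)-1\}+O(1).
\end{equation*}

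Third, I invoke Kodaira's Lemma on the big $\mathbb{Q}$-divisor $-D_\mathcal{L}+(D-D_{m\bar{\epsilon}})$: for a suitable $M$ we write $M(-D_\mathcal{L}+D-D_{m\bar{\epsilon}})\sim A+E$ with $A$ ample and $E$ effective. Multiplying the previous inequality by $M$ and using additivity of heights,
\begin{equation*}
h_A(\phi)+h_E(\phi)\le 2Mr\max\{0,g(C)-1\}+O(1).
\end{equation*}
Because $\phi(\tilde C)\not\subseteq\mathrm{supp}(E)\subseteq Z_{\bar{\epsilon},\omega,D}$, the functorial lower bound $h_E(\phi)\ge -O(1)$ applies, so taking $\mathcal{A}=\mathcal{O}(A)$ and $B=2Mr$ (absorbing the bounded term into $B$ or an ample error) delivers $h_\mathcal{A}(\nu_C)\le B\max\{0,g(C)-1\}$, which for $g(C)\le 1$ forces the curve into $Z_{\bar{\epsilon},\omega,D}$.

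The main obstacle is the $\omega$-integral case: one must know that at non-degenerate points of $(HS^m_{X/\mathbb{C}})_r$, two distinct $\omega$-integral curves intersect with multiplicity at most $m$. This is exactly what makes the hypothesis $\epsilon_j<1/m$ sharp, converting the Campana contact condition into a geometric contradiction. Everything else is a fairly mechanical combination of the Main Theorem, the Campana bound on truncated counting, and Kodaira's Lemma.
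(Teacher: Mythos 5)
Your proposal is correct and follows essentially the same route as the paper: split on whether the Campana curve is $\omega$-integral, play the transversality of distinct $\omega$-integral curves off $\Delta(\omega)$ (Lemma \ref{lemth}, which gives intersection multiplicity $1$, even stronger than the bound by $m$ you ask for) against the Campana contact condition $\mathrm{ord}\geq 1/\epsilon_j>m$ to force the integral case into $V_{\omega,D}$, and in the non-integral case combine the contrapositive of Theorem \ref{main} with the bound $N^{(m)}(D_j,\nu_C)\leq mN^{(1)}(D_j,\nu_C)\leq m\epsilon_j h_{\mathcal{O}(D_j)}(\nu_C)$ and Kodaira's Lemma. The one point to tighten is your $O(1)$ bookkeeping: in this function-field setting all heights are exact degrees, so every error term you write is actually $0$ (in particular $h_E(\nu_C)\geq 0$ holds exactly because $\nu_C(\tilde C)\not\subseteq\mathrm{supp}(E)$), and this exactness matters --- a genuinely nonzero additive constant could not be absorbed into $B=2Mr$ without destroying the conclusion for curves of genus $0$ and $1$, where the right-hand side vanishes.
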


Let us note that in concrete examples one can explicitly compute $Z_{\bar{\epsilon},\omega,D}$, in a way similar to Corollary \ref{quad2}. 

To conclude this introduction, let us outline the structure of the manuscript. In Section \ref{2} we give preliminaries about Hasse-Schmidt differentials and $\omega$-integral curves, and in Section \ref{3} we prove some properties of $\omega$-integral curves that will be helpful for us. In Section \ref{4} we relate intersection multiplicities to $\omega$-integrality, presenting the local analysis that will be central in the proof of Theorem \ref{main}.
In Section \ref{5} we prove Theorem \ref{main} and in Section \ref{6} we prove Theorems \ref{vojta} and \ref{Camp}. In Section \ref{7} we prove Corollary \ref{quad2}.

\textbf{Acknowledgements:} I thank Hector Pasten, Jacob Tsimerman, and Julie Tzu-Yueh Wang for useful conversations that led to various improvements on earlier versions of this work. Part of this research was carried out while the author was a Postdoctoral Fellow at the University of Toronto. The author was supported by the grants ANID Fondecyt regular 1211004, ANID Fondecyt Iniciacion en Investigacion 11190172 and ANID PAI79170039.


\section{Definitions and preliminaries}\label{2}

We will work in the category of schemes over $\mathbb{C}$. Varieties are irreducible reduced separated schemes of finite type over $\mathbb{C}$. 

\subsection{Hasse-Schmidt differentials}

We briefly outline a theory of Hasse-Schmidt differentials, following the conventions in \cite{V2}. We follow this approach instead of the jet differentials one for its potential to work in varieties with singularities \cite{BTV19} and in positive characteristic \cite{CP21}.

\begin{definition}
Let $A$ be a ring, let $f\colon A\to B$ and $A\to R$ be $A$-algebras, and let $m\in \mathbb{N}$. A \emph{higher derivation of order $m$ from $B$ to $R$ over $A$} is a sequence $(D_0,\ldots,D_m)$, where $D_0\colon B\to R$ is an $A$-algebra homomorphism and $D_i\colon B\to R$ with $i=1,\ldots,m$ are homomorphisms of additive abelian groups such that
\begin{itemize}
\item $D_i(f(a))=0$ for all $a\in A$ and all $i=1,\ldots, m$,
\item (Leibniz rule) for $x,y\in B$ and $k=0,\ldots,m$, we have $D_k(xy)=\sum_{i+j=k}D_i(x)D_j(y).$
\end{itemize}
\end{definition}

The set of higher derivations of order $m$ from $B$ to $R$ will be denoted by $\mathrm{Der}_A^m(B,R)$. One has a covariant functor $\mathrm{Der}_A^m(B,\cdot)$ from the category of $A$-algebras to the category of sets.

\begin{definition}\label{hs}
Let $f\colon A\to B$ and $m$ be as above. Define the $B$-algebra of \emph{Hasse-Schmidt differentials} $HS^m_{B/A}$ to be the quotient of the polynomial algebra $B[x^{(i)}]_{x\in B,\ i=1,\ldots,m}$ by the ideal $I$ generated by the union of the sets
\begin{enumerate}
\item[(a)] $\{(x+y)^{(i)}-x^{(i)}-y^{(i)}:x,y\in B,\ i=1,\ldots,m\}$,
\item[(b)] $\{f(a)^{(i)}:a\in A,\ i=1,\ldots,m\}$,
\item[(c)] $\{(xy)^{(k)}-\sum_{i+j=k}x^{(i)}y^{(j)}:x,y\in B,\ k=0,\ldots,m\},$
\end{enumerate}
where we identify $x^{(0)}$ with $x$ for all $x\in B$. We also define the \emph{universal derivation $(d_0,\ldots,d_m)$ from $B$ to $HS^m_{B/A}$} by $d_ix=x^{(i)}\ (\mathrm{mod}\ I)$.
\end{definition}

The algebra $HS^m_{B/A}$ (either over $B$ or over $A$) is graded, the degree of $d_ix$ being $i$.
We denote by $(HS^m_{B/A})_r$ its homogeneous part of degree $r$.
The following is Proposition 1.6 in \cite{V2}.

\begin{proposition}\label{difv}
Let $A\to B$ and $A\to R$ be $A$-algebras, and let $m\in\mathbb{N}$. Given a derivation $(D_0,\ldots,D_m)$ from $B$ to $R$, there exists a unique $A$-algebra homomorphism $\phi\colon HS^m_{B/A}\to R$ such that $(D_0,\ldots,D_m)=(\phi\circ d_0,\ldots,\phi\circ d_m)$. Consequently $HS^m_{B/A}$, together with the universal derivation, represents the functor $\mathrm{Der}_A^m(B,\cdot)$.
\end{proposition}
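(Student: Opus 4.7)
The plan is to prove this by the standard route for universal properties of algebras defined by generators and relations. First I would construct a candidate lift at the level of the free polynomial algebra $B[x^{(i)}]_{x\in B,\,i=1,\ldots,m}$, then verify it annihilates the defining ideal $I$, and finally extract uniqueness from the fact that the $d_i(x)$ together with the image of $B$ generate $HS^m_{B/A}$.

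Concretely, given a higher derivation $(D_0,\ldots,D_m)$ from $B$ to $R$ over $A$, I define an $A$-algebra homomorphism
$$\tilde\phi\colon B[x^{(i)}]_{x\in B,\,i=1,\ldots,m}\longrightarrow R$$
by declaring $\tilde\phi(b)=D_0(b)$ for $b\in B$ (using that $D_0$ is an $A$-algebra homomorphism, so this is compatible with the $A$-algebra structures of $B$ and $R$) and $\tilde\phi(x^{(i)})=D_i(x)$ for $x\in B$ and $i\geq 1$. The universal property of the polynomial algebra makes $\tilde\phi$ well defined.

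Next I would check that $\tilde\phi$ vanishes on each of the three families of generators of $I$ in Definition \ref{hs}. For family (a) this is the statement that $D_i$ is additive; for family (b) it is the vanishing $D_i\circ f=0$ on $A$; for family (c) it is exactly the Leibniz rule, once one notes that the $i=0$ term $x^{(0)}y^{(j)}$ is sent to $D_0(x)D_j(y)$, consistent with the identification $x^{(0)}=x$. Hence $\tilde\phi$ descends to an $A$-algebra homomorphism $\phi\colon HS^m_{B/A}\to R$, and by construction $\phi(d_ix)=\phi(x^{(i)}\bmod I)=D_i(x)$, so $(\phi\circ d_0,\ldots,\phi\circ d_m)=(D_0,\ldots,D_m)$.

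For uniqueness, any $A$-algebra homomorphism $\psi\colon HS^m_{B/A}\to R$ with $\psi\circ d_i=D_i$ is prescribed on $B\subseteq HS^m_{B/A}$ (via $\psi(b)=\psi(d_0b)=D_0(b)$) and on each generator $d_ix$. Since $HS^m_{B/A}$ is generated as an $A$-algebra by $B$ together with the elements $\{d_ix:x\in B,\ 1\leq i\leq m\}$, such a $\psi$ is determined, hence $\psi=\phi$. The functorial representation statement then follows formally: the assignment $R\mapsto \mathrm{Hom}_{A\text{-alg}}(HS^m_{B/A},R)$ is naturally bijective with $\mathrm{Der}_A^m(B,R)$ via $\phi\mapsto(\phi\circ d_0,\ldots,\phi\circ d_m)$. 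No step is really difficult here; the only place to be careful is the bookkeeping in relation (c), ensuring the $i=0$ case is handled consistently with the identification $x^{(0)}=x$ and with $D_0$ being the $A$-algebra map used on coefficients.
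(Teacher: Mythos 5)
Your argument is correct and complete: the paper itself gives no proof of this proposition, simply citing it as Proposition 1.6 of [Voj07] (\cite{V2}), and your generators-and-relations argument is exactly the standard one behind that reference. The one point worth being careful about — that $x^{(0)}=x$ lives in the coefficient ring $B$ so the $i=0$ term of relation (c) is sent to $D_0(x)D_j(y)$, matching the Leibniz rule — is handled explicitly in your write-up, and the uniqueness step correctly uses that $HS^m_{B/A}$ is generated over $A$ by $d_0(B)$ together with the elements $d_ix$.
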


Proposition \ref{difv} implies that for any homomorphism of $A$-algebras $f\colon C\to B$, the higher derivation $d_i\circ f\colon C\to HS^m_{B/A}$ over $A$ induces a map $HS^m_{C/A}\to HS^m_{B/A}$. Extending scalars to $B$ we obtain the homomorphism \begin{eqnarray*}f^{B/C}\colon HS^m_{C/A}\otimes_C B&\to& HS^m_{B/A}\cr d(c)\otimes1_B&\mapsto& d(f^\#(c)).\end{eqnarray*}
Note that $f^{B/C}$ is not related to the fundamental exact sequences presented in Section 2 of \cite{V2}.

The previous constructions behave well under localization and they give analogous notions for sheaves of $\mathcal{O}_X$-algebras on a scheme $X$, see Theorem 4.3 in \cite{V2}. 
Then, as in \cite{EGA} IV.16.5.3, by extending the definition of \emph{derivation from $\mathcal{O}_X$ to an $\mathcal{O}_X$-module} of \cite{EGA} IV.16.5.1 to the setting of higher derivations, we obtain a pair $(HS^m_{X/\mathbb{C}},(d_{X,0},\ldots,d_{X,m}))$ representing the functor $\mathrm{Der}^m_\mathbb{C}(\mathcal{O}_X,\cdot)$ of higher order derivations from $\mathcal{O}_X$ to an $
\mathcal{O}_X$-algebra over $\mathbb{C}$.
From a morphism of schemes $f:Y\to X$, we have a homomorphism $$f^{Y/X}\colon f^*HS^m_{X/\mathbb{C}}\to HS^m_{Y/\mathbb{C}}$$ locally defined by the maps $f^{B/C}$.

From now on, given a scheme $X$ we write $HS^m_X$ instead of $HS^m_{X/\mathbb{C}}$. We recall our convention that we are working in the category of schemes over $\mathbb{C}$.


\subsection{About $\omega$-integral subvarieties}

Given  an $\mathcal{O}_X$-module $\mathcal{F}$ and a morphism $f\colon Y\to X$, denote by $\rho^f_{\mathcal{F}}\colon \mathcal{F}\to f_*f^*\mathcal{F}$ the canonical homomorphism (\cite{EGA} 0$_{\mathrm{I}}$,3.5.3.2), and by $f^\#\colon \mathcal{O}_X\to f_*\mathcal{O}_Y$ the map defined in \cite{H}, p.\ 72.
For an invertible sheaf $\mathcal{L}$ on $X$, we define $$f^\bullet_{r,\mathcal{L}} \colon H^0(X,\mathcal{L}\otimes (HS^m_{X})_r)\to H^0(Y,f^*\mathcal{L}\otimes (HS^m_{Y})_r)$$ as the composition of the following maps
\begin{eqnarray*}
H^0(X,\mathcal{L}\otimes (HS^m_{X})_r) \to H^0(X,f_*f^*(\mathcal{L}\otimes (HS^m_{X})_r))&=& H^0(Y,f^*(\mathcal{L}\otimes (HS^m_{X})_r))\\
&\xrightarrow{\sim} & H^0(Y,f^*\mathcal{L}\otimes f^*(HS^m_{X})_r)\\
&\to & H^0(Y,f^*\mathcal{L}\otimes (HS^m_{Y})_r).
\end{eqnarray*}
Here the first map is induced by $\rho^f_{\mathcal{L}\otimes(HS^m_X)_i}$ and the last by $f^{Y/X}$.
An important property of this map that will be used in the proof of Theorem \ref{main} is the following: 

\begin{lemma}\label{comp}
Let $f:Y\to X$ and $g:Z\to Y$ be morphisms, and $\mathcal{L}$ an invertible sheaf on $X$. Then $(f\circ g)^\bullet=g^\bullet\circ f^\bullet$.
\end{lemma}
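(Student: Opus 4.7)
The plan is a short diagram chase verifying that every piece entering the definition of $f^\bullet$ is compatible with composition of morphisms, so that $(f\circ g)^\bullet$ and $g^\bullet\circ f^\bullet$ unfold to the same composition.

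First I would split $f^\bullet_{r,\mathcal{L}}$ into two conceptual parts. Part (a): the ordinary pullback of global sections of $\mathcal{G}=\mathcal{L}\otimes(HS^m_X)_r$, namely the map $H^0(X,\mathcal{G})\to H^0(Y,f^*\mathcal{G})$, which absorbs the first three arrows of the definition (the adjunction $\rho^f$, the canonical identification $H^0(X,f_*-)=H^0(Y,-)$, and the canonical iso $f^*(\mathcal{L}\otimes(HS^m_X)_r)\cong f^*\mathcal{L}\otimes f^*(HS^m_X)_r$). Part (b): the map on global sections induced by $\mathrm{id}_{f^*\mathcal{L}}\otimes f^{Y/X}$. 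Part (a) is manifestly functorial in $f$: pullback on global sections of a fixed sheaf is contravariant, and under the canonical isomorphism $(f\circ g)^*\cong g^*f^*$ the pullback along $f\circ g$ coincides with pullback along $g$ after pullback along $f$.

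The substantive point is the compatibility for (b), namely
$$(f\circ g)^{Z/X}=g^{Z/Y}\circ g^*(f^{Y/X})$$
as maps $g^*f^*HS^m_X\to HS^m_Z$, after identifying $(f\circ g)^*HS^m_X$ with $g^*f^*HS^m_X$. I would prove this locally from Proposition \ref{difv}: both sides correspond, via the universal property, to the same higher derivation $(d_{Z,i}\circ(f\circ g)^\#)_{i=0}^m$ from $\mathcal{O}_X$ to $HS^m_Z$. At the ring level this is the transparent assertion that for composable algebra maps $C\xrightarrow{\varphi} B\xrightarrow{\psi} D$ the maps $(\psi\circ\varphi)^{D/C}$ and $\psi^{D/B}\circ(\varphi^{B/C}\otimes_B\mathrm{id}_D)$ both send $d(c)\otimes 1_D\mapsto d(\psi(\varphi(c)))$. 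Sheafification over an affine cover then yields the claim.

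Combining (a) and (b), both $(f\circ g)^\bullet$ and $g^\bullet\circ f^\bullet$ reduce to the same operation: pullback of global sections along $f\circ g$, followed by the map induced by $\mathrm{id}\otimes(f\circ g)^{Z/X}$. The main obstacle is purely bookkeeping: keeping track of the various canonical identifications (tensor product commuting with pullback, $(f\circ g)^*\cong g^*f^*$, and the factorization $\rho^{f\circ g}_{\mathcal{G}}=f_*\rho^g_{f^*\mathcal{G}}\circ\rho^f_{\mathcal{G}}$) and applying them in a consistent order on both sides.
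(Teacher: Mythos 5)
Your proposal is correct and follows essentially the same route as the paper: the heart of both arguments is the cocycle identity $(f\circ g)^{Z/X}=g^{Z/Y}\circ g^*(f^{Y/X})$ verified locally on affine opens, with the remaining steps being naturality of the canonical identifications involving $\mathcal{L}$, pushforward, and global sections. The only (harmless) difference is that you deduce the local identity from the universal property of $HS^m$ via Proposition \ref{difv}, whereas the paper checks it directly on elements.
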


\begin{proof}
Working on affine open sets of $Z$, one verifies that the following diagram of $\mathcal{O}_Z$-modules is commutative
\begin{equation*}
\begin{gathered}
\begin{tikzpicture}[scale=1.3]
\node (A) at (0,0) {$g^*f^*(HS^m_X)_r$};
\node (B) at (3,0) {$g^*(HS^m_Y)_r.$};
\node (C) at (0,1) {$(f\circ g)^*(HS^m_X)_r$};
\node (D) at (3,1) {$(HS^m_Z)_r$};
\path[->,font=\scriptsize,>=angle 90]
(A) edge node[below] {} (B)
(C) edge node[left] {} (A)
(C) edge node[above] {} (D)
(B) edge node[right] {} (D);
\end{tikzpicture}
\end{gathered}
\end{equation*}
Then, applying Proposition 3.22 from \cite{thesis} we obtain a version of Proposition 3.33 in \cite{thesis}, namely that the following diagram is commutative
\begin{equation*}
\begin{gathered}
\begin{tikzpicture}[scale=1.3]
\node (A) at (0,0) {$g^*f^*(\mathcal{L}\otimes(HS^m_X)_r)$};
\node (B) at (8,0) {$g^*(f^*\mathcal{L}\otimes (HS^m_Y)_r).$};
\node (C) at (0,1) {$(f\circ g)^*(\mathcal{L}\otimes (HS^m_X)_r)$};
\node (D) at (8,1) {$g^*f^*\mathcal{L}\otimes (HS^m_Z)_r$};
\node (E) at (4,1) {$(f\circ g)^*\mathcal{L}\otimes(HS^m_Z)_r$};
\path[->,font=\scriptsize,>=angle 90]
(A) edge node[below] {} (B)
(E) edge node[below] {} (D)
(C) edge node[left] {} (A)
(C) edge node[above] {} (E)
(D) edge node[right] {} (B);
\end{tikzpicture}
\end{gathered}
\end{equation*}
By applying the functor $(f\circ g)_*=f_*\circ g_*$ and taking global sections we obtain the result.
\end{proof}

 We now introduce the fundamental definition:

\begin{definition}
Let $X$ be a smooth variety, let $m,r\geq 1$ be integers, let $\mathcal{L}$ be an invertible sheaf on $X$, and let $\omega\in H^0(X,\mathcal{L}\otimes(HS^m_{X})_r)$. A proper subvariety $Z\subseteq X$ is said to be $\omega$\emph{-integral} if the section $$(\nu_{Z})^\bullet_{r,\mathcal{L}}(\omega)\in H^0(\tilde{Z},\nu_Z^*\mathcal{L}\otimes (HS^m_{\tilde{Z}})_r)$$ is zero, where $\nu_Z\colon \tilde{Z}\to X$ is the normalization of $Z$.
\end{definition}


\section{Properties of $\omega$-integral varieties}\label{3}

\subsection{Global sections and $\omega$-integral curves}

From now on, by \emph{curve} we mean a projective algebraic variety of dimension one, not necessarily smooth.
Let $X$ be a smooth variety and consider an invertible sheaf $\mathcal{L}$ on $X$. Given a global section $\omega\in H^0(X,\mathcal{L}\otimes (HS^m_{X})_r)$, it follows that a curve $C\subset X$ is trivially $\omega$-integral when $H^0(\tilde{C},\nu_C^*\mathcal{L}\otimes (HS^m_{\tilde{C}})_r)=\{0\}.$
Here we give explicit numerical conditions for this group to be trivial.

\begin{lemma}\label{elgen1}
Let $Y$ be a smooth curve of genus $g\geq 1$ and let $r\in\mathbb{Z}_{>0}$. For any invertible sheaf $\mathcal{L}$ on ${Y}$ of degree $\deg_{Y}(\mathcal{L})<r(2-2g)$, and any $m\in\mathbb{N}$, we have $H^0(Y,\mathcal{L}\otimes (HS^m_{Y})_r)=\{0\}.$
\end{lemma}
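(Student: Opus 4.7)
The plan is to exploit the special structure of $(HS^m_Y)_r$ on a smooth curve: it is a locally free $\mathcal{O}_Y$-module carrying a natural ``length'' filtration whose graded pieces are direct sums of tensor powers of $\Omega^1_Y$. This reduces the vanishing to the well-known fact that line bundles of negative degree on a projective curve have no global sections.

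Concretely, I fix a local parameter $t$, so that $(HS^m_Y)_r$ is the free $\mathcal{O}_Y$-module on the monomials $(d_1 t)^{a_1}\cdots(d_m t)^{a_m}$ with $\sum_i i\,a_i=r$. Assign to such a monomial the length $k=\sum_i a_i$, necessarily satisfying $1\le k\le r$, and let $F_k\subseteq (HS^m_Y)_r$ be the subsheaf locally generated by monomials of length $\ge k$. This produces a decreasing filtration
$$
(HS^m_Y)_r=F_1\supseteq F_2\supseteq\cdots\supseteq F_r\supseteq F_{r+1}=0.
$$

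The key structural step is to check that each $F_k$ is intrinsic (independent of the chosen local parameter) and that the quotient $F_k/F_{k+1}$ is isomorphic to a direct sum of copies of $(\Omega^1_Y)^{\otimes k}$. For a change of parameter $s=f(t)$, Taylor-expanding $f(t+\epsilon d_1 t+\epsilon^2 d_2 t+\cdots)$ and reading off the coefficient of $\epsilon^j$ yields $d_j s = f'(t)\,d_j t+(\text{sum of monomials in the }d_i t\text{ of length}\ge 2)$. Multiplying out, a length-$k$ monomial in the variables $d_i s$ equals $f'(t)^k$ times the corresponding length-$k$ monomial in the $d_i t$, modulo terms of length strictly greater than $k$. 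This simultaneously shows that $F_k$ is globally well-defined and identifies $F_k/F_{k+1}$ with copies of $(\Omega^1_Y)^{\otimes k}$, indexed by partitions of $r$ into exactly $k$ positive parts of size at most $m$.

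I then twist each short exact sequence $0\to F_{k+1}\to F_k\to F_k/F_{k+1}\to 0$ by $\mathcal{L}$ and apply the long exact cohomology sequence. A descending induction starting from $F_{r+1}=0$ shows that $H^0(Y,\mathcal{L}\otimes(HS^m_Y)_r)=0$ provided $H^0(Y,\mathcal{L}\otimes(\Omega^1_Y)^{\otimes k})=0$ for every $1\le k\le r$. Each such line bundle has degree $\deg\mathcal{L}+k(2g-2)$; under the hypotheses $g\ge 1$ and $\deg\mathcal{L}<r(2-2g)$, combined with $k\le r$ (so that $k(2-2g)\ge r(2-2g)$), this degree is strictly negative, forcing vanishing of $H^0$. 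The main obstacle is the structural analysis at the beginning — setting up the length filtration and computing its graded quotients via the higher chain rule; once that is established, the cohomological reduction and numerical check are routine.
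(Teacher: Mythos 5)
Your proof is correct, but it takes a genuinely different route from the paper's. The paper argues by induction on $m$: it invokes the Green--Griffiths filtration $(HS^{m-1}_{Y})_r=S_0\subseteq S_1\subseteq\cdots\subseteq S_{\lfloor r/m\rfloor}=(HS^{m}_{Y})_r$, whose graded pieces are $S^i\Omega^1_{Y}\otimes (HS^{m-1}_{Y})_{r-mi}$, and reduces the vanishing for order $m$ to the vanishing for order $m-1$ with a twisted line bundle still satisfying the degree hypothesis; the base case is $(HS^1_Y)_r=S^r\Omega^1_Y$. You instead build a single decreasing ``length'' filtration $F_1\supseteq\cdots\supseteq F_{r+1}=0$ whose graded pieces are already direct sums of the line bundles $(\Omega^1_Y)^{\otimes k}$, justified by the higher chain rule $d_js=f'(t)\,d_jt+(\text{length}\ge 2)$, and then conclude in one pass of the long exact cohomology sequence. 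Your structural claim is the curve case of the standard degeneration of jet differentials to $\bigoplus S^{\ell_1}\Omega^1\otimes\cdots\otimes S^{\ell_m}\Omega^1$ (each such piece being $(\Omega^1_Y)^{\otimes(\ell_1+\cdots+\ell_m)}$ on a curve), and your verification of it is sound, including the well-definedness of $F_k$ and the diagonal transition by $f'(t)^k$ on $F_k/F_{k+1}$. The numerical check $\deg\mathcal{L}+k(2g-2)<(r-k)(2-2g)\le 0$ for $1\le k\le r$ is also correct. What the paper's route buys is brevity by citing the filtration of Green--Griffiths and a statement that inducts cleanly in $m$ (which it reuses for the genus-zero case in the next lemma); what yours buys is a self-contained, induction-free argument in which all graded pieces are honest line bundles, making the degree bookkeeping completely transparent.
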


\begin{proof} 
We know that for any $r\in\mathbb{N}$ $$\deg_{Y}(\mathcal{L}\otimes (HS^1_{Y})_r)=\deg_{Y}(\mathcal{L})+r(2g-2),$$ as $(HS^1_Y)_r=S^r\Omega^1_{Y}$. Since $\mathcal{L}$ satisfies $\deg_{Y}(\mathcal{L})<r(2-2g)$, we obtain $H^0(Y,\mathcal{L}\otimes (HS^1_{Y})_r)=0$.

Now consider $m>1$ such that the result is true for $m-1$, for all $r$ and all $\mathcal{L}$ satisfying $\deg_Y\mathcal{L}<r(2-2g)$. Fix $\mathcal{L}_0$ satisfying $\deg_{Y}(\mathcal{L}_0)<r(2-2g)$, and consider the exact sequences 
$$0\to \mathcal{L}_0\otimes S_{i-1}\to \mathcal{L}_0\otimes S_i\to\mathcal{L}_0\otimes S^i\Omega^1_{Y}\otimes (HS^{m-1}_{Y})_{r-mi}\to 0$$
for $1\leq i\leq \lfloor r/m\rfloor$ arising from the filtration (see \cite{GG}, paragraph (1.6)) $$(HS^{m-1}_{Y})_r=S_0\subseteq S_1\subseteq\cdots\subseteq S_{\lfloor r/m\rfloor}=(HS^{m}_{Y})_r,$$ 
where $\lfloor x\rfloor$ is the floor function. Taking global sections, for each $0\leq i\leq \lfloor r/m\rfloor$ we obtain
$$0\to H^0(Y,\mathcal{L}_0\otimes S_{i-1})\to H^0(Y,\mathcal{L}_0\otimes S_i)\to H^0(Y,\mathcal{L}_0\otimes S^i\Omega^1_{Y}\otimes (HS^{m-1}_{Y})_{r-mi}).$$
Note that $H^0(\mathcal{L}_0\otimes S_0)=\{0\}$ by induction hypothesis. Hence to make $H^0(Y,\mathcal{L}_0\otimes S_i)=\{0\}$ it suffices to have
\begin{eqnarray}\label{lo}
H^0(Y,\mathcal{L}_0\otimes S^i\Omega^1_Y\otimes (HS^{m-1}_Y)_{r-mi})=\{0\}
\end{eqnarray} 
for $0\leq i\leq\lfloor r/m\rfloor$. Since $g\geq 1$, for every $0\leq i\leq\lfloor r/m\rfloor$ we have $$\deg_{Y}(\mathcal{L}_0)<r(2-2g)\leq(r-(m-1)i)(2-2g).$$ 
It follows that
$$\mathrm{deg}_{Y}(\mathcal{L}_0\otimes S^i\Omega_Y^1)<(r-(m-1)i)(2-2g)+i(2g-2)=(r-mi)(2-2g).$$
By induction hypothesis with $\mathcal{L}=\mathcal{L}_0\otimes S^i\Omega^1_Y$ one obtains that Equation \eqref{lo} holds.
\end{proof}

\begin{lemma}\label{elgen2}
Let $Y$ be a smooth curve of genus zero and let $r\in\mathbb{Z}_{>0}$. For any invertible sheaf $\mathcal{L}$ on $Y$ with $\deg_{Y}(\mathcal{L})<0$, and any $m\in\mathbb{N}$, we have $H^0(Y,\mathcal{L}\otimes (HS^m_{Y})_r)=\{0\}.$
\end{lemma}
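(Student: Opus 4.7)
The plan is to mirror the inductive structure used in Lemma \ref{elgen1}, exploiting the fact that on a genus-zero smooth curve an invertible sheaf of negative degree has no global sections. Throughout, $Y$ is a smooth rational curve, so $\Omega^1_Y$ has degree $-2$.

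For the base case $m=1$, I would use that $(HS^1_Y)_r = S^r\Omega^1_Y$ has degree $-2r$, so $\mathcal{L}\otimes(HS^1_Y)_r$ has degree $\deg_Y(\mathcal{L})-2r<0$. On a genus-zero smooth curve this forces $H^0(Y,\mathcal{L}\otimes(HS^1_Y)_r)=0$.

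For the inductive step I assume the result for $m-1$, for all positive integers $r$ and all invertible sheaves $\mathcal{L}$ with $\deg_Y\mathcal{L}<0$. Fix $\mathcal{L}_0$ with $\deg_Y(\mathcal{L}_0)<0$ and consider the same filtration
$$(HS^{m-1}_Y)_r=S_0\subseteq S_1\subseteq\cdots\subseteq S_{\lfloor r/m\rfloor}=(HS^m_Y)_r$$
used in Lemma \ref{elgen1}, whose graded pieces are $S^i\Omega^1_Y\otimes(HS^{m-1}_Y)_{r-mi}$. Twisting by $\mathcal{L}_0$ and taking global sections of the resulting short exact sequences gives, for each $1\leq i\leq\lfloor r/m\rfloor$,
$$0\to H^0(Y,\mathcal{L}_0\otimes S_{i-1})\to H^0(Y,\mathcal{L}_0\otimes S_i)\to H^0(Y,\mathcal{L}_0\otimes S^i\Omega^1_Y\otimes(HS^{m-1}_Y)_{r-mi}).$$
The initial group $H^0(Y,\mathcal{L}_0\otimes S_0)=H^0(Y,\mathcal{L}_0\otimes(HS^{m-1}_Y)_r)$ vanishes by the inductive hypothesis, so chaining the sequences reduces the problem to killing each right-hand group.

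For a given $i$ in the relevant range, the sheaf $\mathcal{L}_0\otimes S^i\Omega^1_Y$ has degree $\deg_Y(\mathcal{L}_0)-2i<0$. If $r-mi>0$, the inductive hypothesis applied with $\mathcal{L}=\mathcal{L}_0\otimes S^i\Omega^1_Y$ yields the desired vanishing. If $r-mi=0$, then $(HS^{m-1}_Y)_0=\mathcal{O}_Y$, and $H^0(Y,\mathcal{L}_0\otimes S^i\Omega^1_Y)=0$ follows directly from negativity of the degree on $Y\cong\mathbb{P}^1$. Chaining through the filtration then gives $H^0(Y,\mathcal{L}_0\otimes(HS^m_Y)_r)=0$ and closes the induction. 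There is no real obstacle: the negative-degree vanishing on $\mathbb{P}^1$ is much stronger than the genus $\geq 1$ vanishing, which is precisely why the hypothesis can be relaxed to $\deg_Y(\mathcal{L})<0$ rather than $\deg_Y(\mathcal{L})<r(2-2g)=2r$.
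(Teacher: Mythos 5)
Your proof is correct and follows essentially the same route as the paper: the same filtration of $(HS^m_Y)_r$ with graded pieces $S^i\Omega^1_Y\otimes(HS^{m-1}_Y)_{r-mi}$, the same induction on $m$, and the same observation that $\deg_Y(\mathcal{L}_0)-2i<0$ lets the inductive hypothesis apply. Your separate treatment of the edge case $r-mi=0$ (where the inductive hypothesis does not formally apply because the lemma requires $r>0$) is a small point the paper glosses over, but it changes nothing substantive.
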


\begin{proof} 
The case $m=1$ is trivial. Now consider $m>1$ so that the result holds for $m-1$. As in Lemma \ref{elgen1}, to show that $H^0(Y,\mathcal{L}\otimes (HS^m_{Y})_r)=\{0\}$ for any $\mathcal{L}$ of negative degree, we need to prove that $H^0(Y,\mathcal{L}\otimes S^i\Omega^1_{Y}\otimes (HS^{m-1}_{Y})_{r-mi})=\{0\}$ for each $0\leq i\leq \lfloor r/m\rfloor$. As $Y$ has genus zero, by induction hypothesis this holds when 
$\deg_{Y}(\mathcal{L})-2i<0.$ 
Since $\deg_{Y}(\mathcal{L})<0$, we are done.
\end{proof}

From Lemma \ref{elgen1} and Lemma \ref{elgen2}, we obtain:

\begin{proposition}\label{corogen}
Let $Y$ be a smooth curve of genus $g$ and let $r\in\mathbb{Z}_{>0}$. For any invertible sheaf $\mathcal{L}$ on $Y$ with $\deg_{Y}(\mathcal{L})<2r\min\left\{0,1-g\right\}$, and any $m\in\mathbb{N}$, we have $H^0(Y,\mathcal{L}\otimes(HS^m_{Y})_r)=\{0\}.$
\end{proposition}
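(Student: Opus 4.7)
The plan is to observe that Proposition \ref{corogen} is a direct case-split of the two previous lemmas, with the expression $2r\min\{0,1-g\}$ designed precisely to package the two thresholds into a single statement.

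First I would compute the right-hand side of the hypothesis according to the genus. If $g=0$, then $\min\{0,1-g\}=\min\{0,1\}=0$, so the assumption becomes $\deg_Y(\mathcal{L})<0$. If instead $g\geq 1$, then $1-g\leq 0$, so $\min\{0,1-g\}=1-g$, and the assumption becomes $\deg_Y(\mathcal{L})<2r(1-g)=r(2-2g)$.

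Next I would separate into these two cases and invoke the corresponding lemma. In the case $g=0$, the hypothesis $\deg_Y(\mathcal{L})<0$ is exactly the assumption of Lemma \ref{elgen2}, which yields $H^0(Y,\mathcal{L}\otimes (HS^m_Y)_r)=\{0\}$ for every $m\in\mathbb{N}$. In the case $g\geq 1$, the hypothesis $\deg_Y(\mathcal{L})<r(2-2g)$ is exactly the assumption of Lemma \ref{elgen1}, which again yields the vanishing for every $m\in\mathbb{N}$.

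There is no real obstacle: the content of the proposition lies entirely in Lemmas \ref{elgen1} and \ref{elgen2}, and the only thing to check is that the two degree conditions are captured by a single uniform bound. The role of the $\min$ is to keep the threshold nonpositive in the genus-zero case (where $2-2g=2>0$ would be too permissive) while coinciding with the sharp Riemann--Roch-type bound $r(2-2g)$ in the positive-genus case.
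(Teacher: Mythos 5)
Your proof is correct and is exactly how the paper obtains the proposition: it is stated as an immediate consequence of Lemmas \ref{elgen1} and \ref{elgen2}, with the $\min$ merely packaging the two degree thresholds ($\deg_Y(\mathcal{L})<0$ for $g=0$ and $\deg_Y(\mathcal{L})<r(2-2g)$ for $g\geq 1$) into one bound. Nothing further is needed.
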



\subsection{Local study of $\omega$-integral subvarieties}

Let $f\colon Y\to X$ be a morphism of varieties, let $Q$ be a point in $Y$, and let $P=f(Q)\in X$. 
With $f^{\#}_Q\colon \mathcal{O}_{X,P}\to \mathcal{O}_{Y,Q}$ the map induced on stalks, let 
\begin{eqnarray*}
v_{f,Q}\colon HS^m_{\mathcal{O}_{X,P}/\mathbb{C}}&\to& HS^m_{\mathcal{O}_{Y,Q}/\mathbb{C}}\\
d_ia &\mapsto & d_if^{\#}_Q(a)
\end{eqnarray*}
 be the natural map of $\mathcal{O}_{X,P}$-algebras induced by $f^{\#}_Q$, and denote by $v_{f,Q,r}$ the restriction of $v_{f,Q}$ to $(HS^m_{\mathcal{O}_{X,P}/\mathbb{C}})_r$.
The purpose of this section is to relate the maps $f^\bullet$ and $v_{f,Q,r}$, so we can locally study $\omega$-integral subvarieties.

\begin{notation}
Given $f$ as above and a sheaf $\mathcal{G}$ on $X$, we denote by $f^\#_{\mathcal{G},Q}\colon\mathcal{G}_P\to (f^*\mathcal{G})_Q$ the induced homomorphism by $f$ on stalks.
\end{notation}

Let $Z$ be a variety, and $R$ a point in $Z$. For an invertible sheaf $\mathcal{F}$ on $Z$ and a trivialization $\gamma\colon\mathcal{F}_R\xrightarrow{\sim}\mathcal{O}_{Z,R}$, we define a map $\alpha_{R,\mathcal{F},\gamma}\colon H^0(Z,\mathcal{F}\otimes(HS_Z^m)_r)\to ((HS_Z^m)_r)_R$ as the composition of 
$$H^0(Z,\mathcal{F}\otimes(HS^m_Z)_r)\to (\mathcal{F}\otimes(HS^m_Z)_r)_R$$
with the functorial isomorphism $$(\mathcal{F}\otimes(HS^m_Z)_r)_R\xrightarrow{\sim} \mathcal{F}_R\otimes ((HS^m_Z)_r)_R,$$ and a trivialization isomorphism $$\mathcal{F}_R\otimes((HS^m_Z)_r)_R\xrightarrow{\sim} ((HS^m_Z)_r)_R$$ induced by $\gamma$. 

\begin{lemma}
If $Z$ is smooth at $R$, then $\alpha_{R,\mathcal{F},\gamma}$ is injective.
\end{lemma}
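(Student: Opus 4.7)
The plan is to decompose $\alpha_{R,\mathcal{F},\gamma}$ into its constituent maps and exploit smoothness together with irreducibility. By construction, $\alpha_{R,\mathcal{F},\gamma}$ is the composition of the germ map $\pi_R\colon H^0(Z,\mathcal{G})\to \mathcal{G}_R$ (where $\mathcal{G}=\mathcal{F}\otimes(HS^m_Z)_r$), the canonical stalk/tensor isomorphism $\mathcal{G}_R\xrightarrow{\sim}\mathcal{F}_R\otimes((HS^m_Z)_r)_R$, and the trivialization isomorphism induced by $\gamma$. The last two maps are isomorphisms by construction, so injectivity of $\alpha_{R,\mathcal{F},\gamma}$ is equivalent to injectivity of $\pi_R$, which is what I would prove.

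Suppose $s\in H^0(Z,\mathcal{G})$ has $\pi_R(s)=0$. By the definition of the stalk as a direct limit, there is an open neighborhood $V$ of $R$ on which $s|_V=0$. Since $Z$ is smooth at $R$, after shrinking $V$ I may assume $V$ is contained in the smooth locus $Z^{sm}$ of $Z$. Over such a smooth open set $(HS^m_Z)_r|_V$ is locally free — a standard feature of Hasse--Schmidt differentials on smooth varieties, compatible with the functorial description from Section \ref{2} — and therefore $\mathcal{G}|_V=\mathcal{F}|_V\otimes(HS^m_Z)_r|_V$ is locally free as well.

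Since $Z$ is a variety it is irreducible, and $V$ is a nonempty (hence dense) open subset of the irreducible open $Z^{sm}$. For a locally free sheaf on an irreducible scheme, any section that vanishes on a dense open subset must vanish identically: in any trivialization chart the section becomes a tuple of regular functions, and such functions vanishing on a nonempty open subset of an irreducible variety are identically zero. Applied to $\mathcal{G}|_{Z^{sm}}$, this gives $s|_{Z^{sm}}=0$. The final step — passing from $s|_{Z^{sm}}=0$ to $s=0$ on $Z$ — uses the torsion-freeness of $(HS^m_Z)_r$: its construction via universal higher derivations provides a natural embedding into its stalk at the generic point of $Z$, so sections vanishing on the dense open $Z^{sm}$ are zero. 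This combined with torsion-freeness of the invertible sheaf $\mathcal{F}$ gives $s=0$, as required.

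The main obstacle I anticipate is precisely the torsion-freeness argument for $(HS^m_Z)_r$ when $Z$ is only smooth at $R$ rather than globally. If $Z$ were globally smooth, $\mathcal{G}$ would be locally free throughout, and the conclusion would be immediate; for possibly singular $Z$ one must invoke the functorial description of $HS^m_Z$ developed in Section \ref{2}. In the applications used in this paper the relevant $Z$ will typically be smooth (e.g., normalizations of curves), so this technicality does not complicate the use of the lemma.
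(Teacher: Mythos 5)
Your core argument is exactly the paper's: the last two maps in the definition of $\alpha_{R,\mathcal{F},\gamma}$ are isomorphisms, so everything reduces to injectivity of the germ map $H^0(Z,\mathcal{G})\to\mathcal{G}_R$ for $\mathcal{G}=\mathcal{F}\otimes(HS^m_Z)_r$, which follows from local freeness (a consequence of smoothness) together with irreducibility of $Z$. The paper's proof is precisely this and nothing more.

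The extra step you add to cover the case where $Z$ is smooth only at $R$ contains a genuine gap. You assert that $(HS^m_Z)_r$ embeds into its stalk at the generic point, i.e.\ is torsion-free, ``by its construction via universal higher derivations.'' This is false in general: already for $m=r=1$ the sheaf $(HS^1_Z)_1=\Omega^1_{Z}$ of a singular variety typically has torsion supported on the singular locus (the cuspidal cubic is the standard example), and the same phenomenon persists for higher $m$. A nonzero torsion section supported away from $R$ extends by zero to a global section of $\mathcal{G}$ whose germ at $R$ vanishes, so your proposed extension from $Z^{\mathrm{sm}}$ to $Z$ cannot go through by this route, and indeed the statement with only ``smooth at $R$'' is delicate for exactly this reason. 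The paper sidesteps the issue by silently assuming $Z$ globally smooth in the proof (``Since $Z$ is smooth, the sheaf $(HS^m_Z)_r$ is locally free''), and that is the only case used downstream: Lemma \ref{32} hypothesizes $X$ and $Y$ smooth, and the curves to which it is applied are normalizations, hence smooth. The fix is to restrict to globally smooth $Z$ rather than to attempt a torsion-freeness argument.
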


\begin{proof}
Since $Z$ is smooth, the sheaf $(HS^m_Z)_r$ is locally free. From this we immediately obtain that the homomorphism $H^0(Z,\mathcal{F}\otimes(HS^m_Z)_r)\to (\mathcal{F}\otimes(HS^m_Z)_r)_R$ is injective.
\end{proof}

\begin{lemma}\label{eps}
With $f$ as above, let $\mathcal{L}$ be an invertible sheaf on $X$. Consider a neighbourhood $U$ of $P$ such that there exists an isomorphism $\epsilon\colon\mathcal{L}|_{U}\to\mathcal{O}_{X}|_{U}$. The map $\epsilon$ induces isomorphisms $\epsilon_P\colon\mathcal{L}_P\to\mathcal{O}_{X,P}$ and $\epsilon_Q\colon(f^*\mathcal{L})_Q\to \mathcal{O}_{Y,Q}$ making the following diagram commutative
\begin{equation}\label{ma}
\begin{gathered}
\begin{tikzpicture}[scale=1.3]
\node (A) at (0,0) {$\mathcal{O}_{X,P}$};
\node (B) at (2,0) {$\mathcal{O}_{Y,Q}$};
\node (C) at (0,1) {$\mathcal{L}_P$};
\node (D) at (2,1) {$(f^*\mathcal{L})_Q$};
\path[->,font=\scriptsize,>=angle 90]
(A) edge node[below] {$f^\#_Q$} (B)
(C) edge node[left] {$\epsilon_P$} (A)
(C) edge node[above] {$f^\#_{\mathcal{L},Q}$} (D)
(D) edge node[right] {$\epsilon_Q$} (B);
\end{tikzpicture}
\end{gathered}
\end{equation}
\end{lemma}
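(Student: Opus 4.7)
The plan is straightforward: I would construct $\epsilon_P$ and $\epsilon_Q$ from $\epsilon$ by taking appropriate stalks, and then deduce commutativity of \eqref{ma} from the naturality of the canonical stalk homomorphism $\mathcal{G}_P \to (f^*\mathcal{G})_Q$ as $\mathcal{G}$ varies over sheaves on $X$.

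More concretely, I would first define $\epsilon_P$ as the stalk at $P$ of the sheaf isomorphism $\epsilon \colon \mathcal{L}|_U \xrightarrow{\sim} \mathcal{O}_X|_U$; this is an isomorphism of $\mathcal{O}_{X,P}$-modules. For $\epsilon_Q$, I would form the pullback $f^*\epsilon \colon (f^*\mathcal{L})|_{f^{-1}(U)} \xrightarrow{\sim} (f^*\mathcal{O}_X)|_{f^{-1}(U)}$, postcompose with the canonical identification $f^*\mathcal{O}_X \cong \mathcal{O}_Y$, and take the stalk at $Q \in f^{-1}(U)$. Both maps are isomorphisms since $\epsilon$ is one and pullback preserves isomorphisms.

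Second, I would verify commutativity of \eqref{ma} by applying the naturality of the canonical stalk homomorphism $f^\#_{\cdot, Q}$ to the morphism $\epsilon \colon \mathcal{L}|_U \to \mathcal{O}_X|_U$: this yields a commutative square whose horizontal arrows are $f^\#_{\mathcal{L},Q}$ and $f^\#_{\mathcal{O}_X,Q}$ and whose vertical arrows are $\epsilon_P$ and $(f^*\epsilon)_Q$. Under the identification $(f^*\mathcal{O}_X)_Q = \mathcal{O}_{Y,Q}$, the bottom row becomes exactly $f^\#_Q$ by the very definition of $f^\#$ recalled in \cite{H}, p.~72, and the right vertical arrow becomes $\epsilon_Q$ by construction. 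This gives the desired square \eqref{ma}.

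Since the argument is a purely formal diagram chase via functoriality, I do not expect any real obstacle. The only point requiring minor care is that the canonical identification $f^*\mathcal{O}_X = \mathcal{O}_Y$ is compatible on stalks with the ring homomorphism $f^\#_Q$, but this is already built into the definition of the morphism of locally ringed spaces underlying $f$.
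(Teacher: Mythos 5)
Your proposal is correct and follows essentially the same route as the paper: both construct $\epsilon_P$ as the stalk of $\epsilon$ at $P$ and $\epsilon_Q$ as the stalk at $Q$ of $f^*\epsilon$ composed with the canonical identification $f^*\mathcal{O}_X\cong\mathcal{O}_Y$, and then obtain commutativity of \eqref{ma} by functoriality. Your write-up merely spells out the naturality argument that the paper leaves implicit.
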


\begin{proof} 
The isomorphism $\epsilon$ gives rise to an isomorphism at the level of stalks $\epsilon_P\colon \mathcal{L}_P\xrightarrow{\sim}\mathcal{O}_{X,P}$. On the other hand, $\epsilon$ gives rise to an isomorphism $f^*\mathcal{L}|_{f^{-1}(U)}\cong f^*\mathcal{O}_{X}|_{f^{-1}(U)}\cong \mathcal{O}_{Y}|_{f^{-1}(U)}$, obtaining an isomorphism $\epsilon_Q\colon (f^*\mathcal{L})_Q\xrightarrow{\sim}\mathcal{O}_{Y,Q}$ that makes Diagram \eqref{ma} commutative.
\end{proof}

\begin{lemma}\label{33} 
Let $\mathcal{L}$ be an invertible sheaf in $X$. Consider an isomorphism $\epsilon:\mathcal{L}|_{U}\xrightarrow{\sim}\mathcal{O}_{X}|_{U}$ as in Lemma \ref{eps}. The following diagram is commutative
\begin{equation*}
\begin{gathered}
\begin{tikzpicture}[scale=1.3]
\node (A) at (0,0) {$((HS^m_X)_r)_P$};
\node (B) at (6,0) {$((HS^m_Y)_r)_Q$};
\node (C) at (0,1) {$H^0(X,\mathcal{L}\otimes (HS^m_{X})_r)$};
\node (D) at (6,1) {$H^0(Y,f^*\mathcal{L}\otimes (HS^m_Y)_r)$};
\node (E) at (3,0) {$(f^*(HS^m_X)_r)_Q$};
\path[->,font=\scriptsize,>=angle 90]
(A) edge node[below] {$f^\#_{(HS^m_X)_r,P}$} (E)
(E) edge node[below] {$f^{Y/X}_Q$} (B)
(C) edge node[left] {$\alpha_{P,\mathcal{L},\epsilon}$} (A)
(C) edge node[above] {$f^\bullet_{r,\mathcal{L}}$} (D)
(D) edge node[right] {$\alpha_{Q,f^*\mathcal{L},\epsilon}$} (B);
\end{tikzpicture}
\end{gathered}
\end{equation*}
where $\alpha_{P,\mathcal{L},\epsilon}$, $\alpha_{Q,f^*\mathcal{L},\epsilon}$ are compatible by our choice of trivialization isomorphism.
\end{lemma}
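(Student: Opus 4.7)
The plan is to split the outer square into three smaller commutative cells corresponding to the three pieces used to define $f^\bullet_{r,\mathcal{L}}$: the map induced by the unit $\rho^f_{\mathcal{L}\otimes(HS^m_X)_r}$ (together with the identification $H^0(X,f_*-) = H^0(Y,-)$), the canonical isomorphism $f^*(\mathcal{L}\otimes(HS^m_X)_r)\xrightarrow{\sim} f^*\mathcal{L}\otimes f^*(HS^m_X)_r$, and the morphism $\mathrm{id}\otimes f^{Y/X}$. Each is a natural transformation whose effect on the stalk at $Q$ can be read off from the corresponding construction at the stalk of the source at $P$; stacking the three naturality squares produces the bottom row of the diagram.

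Concretely, I would first isolate the following auxiliary naturality statement: for any $\mathcal{O}_X$-module $\mathcal{G}$, the composition
\[
H^0(X,\mathcal{G})\longrightarrow H^0(Y,f^*\mathcal{G})\longrightarrow (f^*\mathcal{G})_Q
\]
agrees with the stalk-at-$P$ map $H^0(X,\mathcal{G})\to \mathcal{G}_P$ followed by $f^\#_{\mathcal{G},Q}$. Applied to $\mathcal{G}=\mathcal{L}\otimes(HS^m_X)_r$, this handles the first arrow in the definition of $f^\bullet_{r,\mathcal{L}}$. For the second arrow, the canonical splitting $f^*(\mathcal{L}\otimes(HS^m_X)_r)_Q\cong(f^*\mathcal{L})_Q\otimes(f^*(HS^m_X)_r)_Q$ matches, through $f^\#_{\mathcal{L},Q}$ and $f^\#_{(HS^m_X)_r,P}$, the tensor-splitting $(\mathcal{L}\otimes(HS^m_X)_r)_P=\mathcal{L}_P\otimes((HS^m_X)_r)_P$. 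For the third arrow, the stalk at $Q$ of $\mathrm{id}\otimes f^{Y/X}$ is $\mathrm{id}\otimes f^{Y/X}_Q$ by construction.

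The trivialization $\epsilon$ plays a purely organizational role. By Lemma \ref{eps} it yields compatible isomorphisms $\epsilon_P\colon\mathcal{L}_P\xrightarrow{\sim}\mathcal{O}_{X,P}$ and $\epsilon_Q\colon(f^*\mathcal{L})_Q\xrightarrow{\sim}\mathcal{O}_{Y,Q}$ intertwined by $f^\#_Q$, so the two $\alpha$ maps eliminate the $\mathcal{L}$-factor in a consistent way on both sides of the square. Since the three horizontal steps above act as the identity on the $\mathcal{L}$-factor modulo these canonical identifications, commutativity of the outer square reduces to the three naturality squares just described.

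The main technical nuisance I anticipate is bookkeeping: correctly tracking stalks of pullbacks, stalks of tensor products, and the identification $H^0(X,f_*\mathcal{F})= H^0(Y,\mathcal{F})$, and verifying that the trivializations induced on $\mathcal{L}_P$ and $(f^*\mathcal{L})_Q$ by a single $\epsilon$ really are compatible under $f^\#_Q$. Once these identifications are laid out explicitly, no computation remains beyond naturality of each constituent transformation.
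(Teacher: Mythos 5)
Your proposal is correct and follows essentially the same route as the paper: the paper likewise decomposes the square into three cells --- the global-sections-to-stalk naturality square for $f^\bullet_{r,\mathcal{L}}$, the compatibility of the pullback/tensor-splitting identifications on stalks with $f^\#_{\mathcal{L},Q}\otimes f^\#_{(HS^m_X)_r,Q}$, and the trivialization square furnished by Lemma \ref{eps} --- and then stacks them. No substantive difference to report.
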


\begin{proof} 
We study commutativity for each of the components of $\alpha_{P,\mathcal{L},\epsilon}$, $\alpha_{Q,f^*\mathcal{L},\epsilon}$.
The following diagram is commutative by the definition of $f^\bullet_{r,\mathcal{L}}$ (cf.\ Section 2.2) 
\begin{equation*}
\begin{gathered}
\begin{tikzpicture}[scale=1.3]
\node (A) at (0,0) {$(\mathcal{L}\otimes(HS^m_X)_r)_P$};
\node (B) at (0,1) {$H^0(X,\mathcal{L}\otimes(HS^m_X)_r)$};
\node (C) at (4,0) {$(f^*\mathcal{L}\otimes(HS_Y^m)_r)_Q.$};
\node (D) at (4,1) {$H^0(Y,f^*\mathcal{L}\otimes(HS^m_Y)_r)$};
\path[->,font=\scriptsize,>=angle 90]
(B) edge node[above] {$f^\bullet_{r,\mathcal{L}}$} (D)
(A) edge node[above] {} (C)
(B) edge node[above] {} (A)
(D) edge node[below] {} (C);
\end{tikzpicture}
\end{gathered}
\end{equation*}

The left hand side of the following diagram commutes by Proposition 3.24 in \cite{thesis}, the right hand side square commutes by distributivity of the stalk over tensor products.
\begin{equation*}
\begin{gathered}
\begin{tikzpicture}[scale=1.3]
\node (A) at (0,0) {$\mathcal{L}_P\otimes ((HS^m_X)_r)_P$};
\node (B) at (10,0) {$(f^*\mathcal{L})_Q\otimes ((HS^m_X)_r)_Q.$};
\node (C) at (0,1) {$(\mathcal{L}\otimes(HS^m_X)_r)_P$};
\node (D) at (10,1) {$(f^*\mathcal{L}\otimes (HS^m_Y)_r)_Q$};
\node (E) at (6.5,0) {$(f^*\mathcal{L})_Q\otimes(f^*(HS^m_X)_r)_Q$};
\node (F) at (3,1) {$(f^*(\mathcal{L}\otimes(HS^m_X)_r))_Q$};
\node (G) at (6.5,1) {$(f^*\mathcal{L}\otimes f^*(HS^m_X)_r)_Q$};
\path[->,font=\scriptsize,>=angle 90]
(A) edge node[below] {$f^\#_{\mathcal{L},Q}\otimes f^\#_{(HS^m_X)_r,Q}$} (E)
(E) edge node[below] {} (B)
(C) edge node[rotate=90,above] {$\sim$} (A)
(C) edge node[above] {} (F)
(F) edge node[above] {} (G)
(G) edge node[above] {} (D)
(G) edge node[rotate=90,above] {$\sim$} (E)
(D) edge node[rotate=90,below] {$\sim$} (B);
\end{tikzpicture}
\end{gathered}
\end{equation*}

Putting together the previous diagrams, and the following commutative diagram coming from the trivialization maps $\epsilon_P$, $\epsilon_Q$ from Lemma \ref{eps}
\begin{equation*}
\begin{gathered}
\begin{tikzpicture}[scale=1.3]
\node (A) at (0,0) {$((HS^m_X)_r)_P$};
\node (B) at (0,1) {$\mathcal{L}_P\otimes((HS^m_X)_r)_P$};
\node (D) at (4,1) {$(f^*\mathcal{L})_Q\otimes ((HS^m_Y)_r)_Q$};
\node (E) at (4,0) {$((HS^m_Y)_r)_Q$};
\path[->,font=\scriptsize,>=angle 90]
(A) edge node[below] {$f^{Y/X}_Q\circ f^\#_{(HS^m_Y)_r,Q}$} (E)
(B) edge node[rotate=90,above] {$\sim$} (A)
(D) edge node[rotate=90,below] {$\sim$} (E)
(B) edge node[right] {} (D);
\end{tikzpicture}
\end{gathered}
\end{equation*}
the result follows.
\end{proof}

Given a variety $Z$ and a point $R\in Z$, we consider the isomorphism defined by
\begin{eqnarray*}
\beta_{Z,R}\colon ((HS^m_Z)_r)_R &\xrightarrow{\sim}&    (HS^m_{\mathcal{O}_{Z,R}/\mathbb{C}})_r \\
\left(h \prod_{i=1}^m\prod_{j=1}^{k_i} (d_ig_{i,j})^{n_{i,j}}\right)_R &\mapsto& h_R\prod_{i=1}^m \prod_{j=1}^{k_i}(d_i(g_{i,j,R}))^{n_{i,j}},
\end{eqnarray*}
where $h,f,g_{i,j}$ are regular functions on a sufficiently small neighbourhood of $R$. Given an invertible sheaf $\mathcal{F}$ on $Z$, and $\epsilon:\mathcal{F}|_{U}\to \mathcal{O}_Z|_U$ we let $$\mu_{Z,R,\epsilon}=\beta_{Z,R}\circ\alpha_{R,\mathcal{F},\epsilon}.$$
 
\begin{lemma}\label{34}
With notation from Lemma \ref{33}, the following diagram is commutative
\begin{equation}\label{el}
\begin{gathered}
\begin{tikzpicture}[scale=1.3]
\node (A) at (0,1) {$((HS^m_X)_r)_P$};
\node (B) at (6,1) {$((HS^m_Y)_r)_Q$};
\node (C) at (0,0) {$(HS^m_{\mathcal{O}_{X,P}/\mathbb{C}})_r$};
\node (D) at (6,0) {$(HS^m_{\mathcal{O}_{Y,Q}/\mathbb{C}})_r$};
\node (E) at (3,1) {$(f^*(HS^m_X)_r)_Q$};
\path[->,font=\scriptsize,>=angle 90]
(A) edge node[below] {$f^\#_{(HS^m_X)_r,Q}$} (E)
(E) edge node[below] {$f_Q^{Y/X}$} (B)
(A) edge node[left] {$\beta_{X,P}$} (C)
(C) edge node[above] {$v_{f,Q,r}$} (D)
(B) edge node[right] {$\beta_{Y,Q}$} (D);
\end{tikzpicture}
\end{gathered}
\end{equation}
\end{lemma}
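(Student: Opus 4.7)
The plan is to verify the commutativity of diagram \eqref{el} by a direct computation on local generators, reducing to the affine setting in which both $HS^m_X$ and $HS^m_Y$ are described by the Hasse-Schmidt construction of Definition \ref{hs}. First I would choose affine opens $U=\mathrm{Spec}(A)\ni P$ in $X$ and $V=\mathrm{Spec}(B)\ni Q$ in $Y$ with $f(V)\subseteq U$, and let $\varphi\colon A\to B$ be the corresponding ring homomorphism, so that $\mathcal{O}_{X,P}=A_\mathfrak{p}$, $\mathcal{O}_{Y,Q}=B_\mathfrak{q}$, and $f^\#_Q$ is the localization of $\varphi$. All four maps in \eqref{el} are additive, and $\beta_{X,P}$, $\beta_{Y,Q}$ are $\mathcal{O}$-algebra isomorphisms compatible with the grading. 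Hence it suffices to verify commutativity on the stalk at $P$ of a single monomial
\[\omega=h\prod_{i=1}^{m}\prod_{j=1}^{k_i}(d_i g_{i,j})^{n_{i,j}},\qquad h,g_{i,j}\in A,\quad \sum_{i,j} i\cdot n_{i,j}=r.\]

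Next I would chase $\omega_P$ along the bottom-left path. By the definition of $\beta_{X,P}$, the element $\omega_P$ maps to $h_P\prod_{i,j}(d_i\, g_{i,j,P})^{n_{i,j}}$ in $(HS^m_{\mathcal{O}_{X,P}/\mathbb{C}})_r$, and since $v_{f,Q,r}$ is the degree-$r$ component of an $\mathcal{O}_{X,P}$-algebra homomorphism that sends $d_i a\mapsto d_i f^\#_Q(a)$, applying it gives
\[f^\#_Q(h_P)\prod_{i,j}\bigl(d_i\, f^\#_Q(g_{i,j,P})\bigr)^{n_{i,j}}\in (HS^m_{\mathcal{O}_{Y,Q}/\mathbb{C}})_r.\]
Then I would chase $\omega_P$ along the top-right path. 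The map $f^\#_{(HS^m_X)_r,Q}$ sends $\omega_P$ to the class of $\omega\otimes 1_{\mathcal{O}_{Y,Q}}$ in $(f^*(HS^m_X)_r)_Q$, and by the definition of $f^{Y/X}$ recalled in Section \ref{2}, the stalk $f^{Y/X}_Q$ is identified with the ring-theoretic map $\varphi^{B_\mathfrak{q}/A_\mathfrak{p}}\colon HS^m_{A_\mathfrak{p}/\mathbb{C}}\otimes_{A_\mathfrak{p}} B_\mathfrak{q}\to HS^m_{B_\mathfrak{q}/\mathbb{C}}$, which by construction sends $d_i a\otimes 1\mapsto d_i\, f^\#_Q(a)$. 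Evaluating on $\omega\otimes 1$ produces exactly the same element as above, and applying $\beta_{Y,Q}$ gives the common value in $(HS^m_{\mathcal{O}_{Y,Q}/\mathbb{C}})_r$.

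The main obstacle I anticipate is not the symbolic calculation itself but the bookkeeping needed to identify $f^{Y/X}_Q$ with $\varphi^{B_\mathfrak{q}/A_\mathfrak{p}}$ at the level of stalks. This rests on the compatibility of the Hasse-Schmidt construction with localization (Theorem 4.3 in \cite{V2}), which ensures that stalks of $HS^m_X$ agree with $HS^m_{\mathcal{O}_{X,P}/\mathbb{C}}$, together with the universal property in Proposition \ref{difv} to show that the two candidate maps $HS^m_{A_\mathfrak{p}/\mathbb{C}}\otimes_{A_\mathfrak{p}} B_\mathfrak{q}\to HS^m_{B_\mathfrak{q}/\mathbb{C}}$ agree. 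Once this identification is in place, the commutativity of \eqref{el} is immediate from the two computations on the monomial $\omega$, since both paths act identically on generators $d_i a$ and multiplicatively on products.
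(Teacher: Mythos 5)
Your proposal is correct and follows essentially the same route as the paper, whose entire proof is the one-line remark that commutativity of the diagram is checked on elements over affine open sets; you simply carry out that element chase explicitly on monomial generators $h\prod_{i,j}(d_ig_{i,j})^{n_{i,j}}$ and record the localization/universal-property bookkeeping that the paper leaves implicit.
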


\begin{proof}  
By working on affine open sets, we check commutativity of Diagram \eqref{el} on elements.
\end{proof}

From Lemma \ref{33} and Lemma \ref{34} we can relate the maps $f^\bullet_{r,\mathcal{L}}$ and $v_{f,Q,r}$:

\begin{lemma}\label{32}
With notation from Lemma \ref{33}, the following diagram is commutative
$$\begin{CD}
H^0(X,\mathcal{L}\otimes (HS^m_X)_r) @>{f^\bullet_{r,\mathcal{L}}}>> H^0(Y,f^*\mathcal{L}\otimes (HS^m_Y)_r)\\
@V{\mu_{X,P,\epsilon}}VV @VV{\mu_{Y,Q,\epsilon}}V\\
(HS^m_{\mathcal{O}_{X,P}/\mathbb{C}})_r @>>{v_{f,Q,r}}> (HS^m_{\mathcal{O}_{Y,Q}/\mathbb{C}})_r.
\end{CD}$$
If $X$ and $Y$ are smooth, then both maps $\mu_{X,P,\epsilon}$ and $\mu_{Y,Q,\epsilon}$ are injective.
\end{lemma}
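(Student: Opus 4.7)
The plan is to derive the claimed commutativity by pasting together the diagrams of Lemmas \ref{33} and \ref{34}. Since $\mu_{Z,R,\epsilon}$ was defined as the composition $\beta_{Z,R}\circ\alpha_{R,\mathcal{F},\epsilon}$, the left vertical arrow of the target diagram factors as $\beta_{X,P}\circ\alpha_{P,\mathcal{L},\epsilon}$ (through $((HS^m_X)_r)_P$) and the right vertical arrow factors as $\beta_{Y,Q}\circ\alpha_{Q,f^*\mathcal{L},\epsilon}$ (through $((HS^m_Y)_r)_Q$).

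First I would insert into the target square the intermediate horizontal arrow $f^{Y/X}_Q\circ f^\#_{(HS^m_X)_r,P}\colon((HS^m_X)_r)_P\to ((HS^m_Y)_r)_Q$, breaking the square into two stacked squares. The upper square (with top row $f^\bullet_{r,\mathcal{L}}$ and verticals $\alpha_{P,\mathcal{L},\epsilon}$, $\alpha_{Q,f^*\mathcal{L},\epsilon}$) is precisely the content of Lemma \ref{33}, and the lower square (with top row $f^{Y/X}_Q\circ f^\#_{(HS^m_X)_r,P}$, bottom row $v_{f,Q,r}$ and verticals $\beta_{X,P}$, $\beta_{Y,Q}$) is precisely the content of Lemma \ref{34}. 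Pasting these two commutative squares produces the outer commutativity that we want.

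For the injectivity statement under the smoothness hypothesis, I would note that each $\beta_{Z,R}$ is by construction an isomorphism of $\mathcal{O}_{Z,R}$-algebras, so it suffices to argue that $\alpha_{P,\mathcal{L},\epsilon}$ and $\alpha_{Q,f^*\mathcal{L},\epsilon}$ are injective. This was already recorded in the unnamed lemma preceding Lemma \ref{eps}: when the ambient variety is smooth at the chosen point, the sheaf $(HS^m_Z)_r$ is locally free, hence so is $\mathcal{F}\otimes (HS^m_Z)_r$, and therefore the global-to-stalk map appearing as the first factor of $\alpha_{R,\mathcal{F},\epsilon}$ is injective; the remaining factors (the functorial distributivity isomorphism and the trivialization $\epsilon$) are already isomorphisms.

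I do not expect a serious obstacle, since the real work is carried out in Lemmas \ref{33} and \ref{34}; the only point to be careful about is confirming that the trivialization $\epsilon$ chosen on $U\ni P$ is used coherently to produce both $\mu_{X,P,\epsilon}$ and $\mu_{Y,Q,\epsilon}$ — specifically, that the induced isomorphism $\epsilon_Q\colon (f^*\mathcal{L})_Q\xrightarrow{\sim}\mathcal{O}_{Y,Q}$ of Lemma \ref{eps} is exactly the trivialization used in the second factor of $\alpha_{Q,f^*\mathcal{L},\epsilon}$. This compatibility is already built into the statement of Lemma \ref{33}, so no extra verification is needed.
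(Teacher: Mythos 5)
Your proposal is correct and matches the paper's intended argument exactly: the paper states Lemma \ref{32} as an immediate consequence of pasting the commutative squares of Lemmas \ref{33} and \ref{34} along the common row $((HS^m_X)_r)_P\to (f^*(HS^m_X)_r)_Q\to((HS^m_Y)_r)_Q$, using $\mu_{Z,R,\epsilon}=\beta_{Z,R}\circ\alpha_{R,\mathcal{F},\epsilon}$, and the injectivity claim follows just as you say from $\beta_{Z,R}$ being an isomorphism and the earlier unnamed lemma giving injectivity of $\alpha_{R,\mathcal{F},\gamma}$ at smooth points. No gaps.
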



\section{Relation between intersection multiplicities and $\omega$-integrality}\label{4}

In this section we prove the key technical tool used in this manuscript, namely Theorem \ref{dosvar}. 

\begin{definition}
Let $X$ be a smooth variety. A divisor $D$ on $X$ has \emph{strict normal crossings} if it is reduced, if each irreducible component of its support is smooth and if those irreducible components intersect transversally, that is, if their defining equations are linearly independent in $m_{X,P}/m^2_{X,P}$ for each $P\in X$.
\end{definition}

\begin{definition}
Let $D$ be a divisor on a smooth variety $X$ and let $P\in X$. Let $E_{D,P}$ be a local equation of $D$ at $P$, where we consider $E_{D,P}=1$ if $P\not\in \mathrm{supp}(D)$. Given a nonconstant morphism from a smooth curve $\phi\colon C\to X$, for $Q\in C$ we define the \emph{vanishing order of $\phi$ and $D$ at $Q$}, as $\mathrm{ord}_Q(\phi^*E_{D,\phi(Q)})$, which is the exponent of a local parameter of $C$ at $Q$ in $\phi^*E_{D,\phi(Q)}$.\end{definition}

\begin{definition}
Given a smooth variety $X$, a locally free sheaf $\mathcal{F}$, an effective Cartier divisor $D$ on $X$ with associated subscheme $Y=Y_D$ and closed immersion $i_Y\colon Y\to X$, and a section $s\in H^0(X,\mathcal{F})$, we say that $s$\emph{ vanishes identically along }$D$ if the image of $s$ under the map $$H^0(X,\mathcal{F}) \to H^0(X,\mathcal{F}\otimes i_{Y*}\mathcal{O}_Y),$$ induced from $\mathcal{O}_X \to i_{Y*}\mathcal{O}_Y$, is zero.
\end{definition}

The following result will help us to relate $\omega$-integrality with the difference between the height and truncated counting functions. 

\begin{theorem}\label{dosvar} 
Let $C$ be a smooth curve and let $X$ be a smooth variety of dimension $n$. Let $\phi\colon C\to X$ be a nonconstant morphism, let $Q$ be a point in $C$, and $P=\phi(Q)$.
Let $m,r\in\mathbb{N}$, let $\mathcal{L}$ be an invertible sheaf on $X$, and let $\omega\in H^0(X,\mathcal{L}\otimes (HS^m_{X})_r)$. Let $D_1,\ldots,D_k$ be $\omega$-integral hypersurfaces forming a strict normal crossings divisor $D$. Suppose that $P\in\cap_{i=1}^k D_i$.

If $\mathrm{ord}_Q(\phi^*E_{D_i,P})=c_i>m$ for each $i=1,\ldots,k$, then the section $\phi^\bullet_{r,\mathcal{L}}\omega\in H^0(C,\phi^*\mathcal{L}\otimes (HS^m_{C})_r)$ vanishes identically along $(c_1+\cdots+c_k-km)Q$.
\end{theorem}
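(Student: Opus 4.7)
Fix local coordinates $x_1,\ldots,x_n$ on $X$ centered at $P$ with $E_{D_i,P}=x_i$ for $i=1,\ldots,k$; this is possible by the strict normal crossings assumption. Let $t$ be a local parameter on $C$ at $Q$, so $\phi^{\#}_{Q}(x_i)=t^{c_i}u_i$ with $u_i\in\mathcal{O}_{C,Q}^{*}$. Pick a local trivialization $\epsilon\colon\mathcal{L}|_U\xrightarrow{\sim}\mathcal{O}_X|_U$ near $P$ as in Lemma~\ref{eps}, and denote again by $\omega$ the image $\mu_{X,P,\epsilon}(\omega)\in(HS^m_{\mathcal{O}_{X,P}/\mathbb{C}})_r$. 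By Lemma~\ref{32} and the injectivity of $\mu_{C,Q,\epsilon}$, to conclude that $\phi^\bullet_{r,\mathcal{L}}\omega$ vanishes identically along $sQ$ with $s:=c_1+\cdots+c_k-km$, it suffices to show that every coefficient of $v_{\phi,Q,r}(\omega)\in(HS^m_{\mathcal{O}_{C,Q}/\mathbb{C}})_r$, written in the free $\mathcal{O}_{C,Q}$-basis of monomials in $d_1t,\ldots,d_mt$, has $t$-valuation at least $s$.

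In these coordinates $HS^m_{\mathcal{O}_{X,P}/\mathbb{C}}$ is the graded polynomial $\mathcal{O}_{X,P}$-algebra on the variables $y_{l,j}:=d_jx_l$, with $\deg(y_{l,j})=j$. Each $D_i$ is smooth, so applying Lemma~\ref{32} to the closed immersion $\nu_{D_i}\colon D_i\hookrightarrow X$ translates the $\omega$-integrality of $D_i$ into $\omega\in I_i:=(x_i,y_{i,1},\ldots,y_{i,m})$, since the kernel of the restriction $HS^m_{\mathcal{O}_{X,P}/\mathbb{C}}\to HS^m_{\mathcal{O}_{D_i,P}/\mathbb{C}}$ is precisely this ideal. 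The first crucial step is the sharper inclusion
\[
\omega\in\bigcap_{i=1}^k I_i\;=\;I_1\cdot I_2\cdots I_k.
\]
To see the equality, expand any $f\in\bigcap_i I_i$ as $f=\sum_{\vec a}f_{\vec a}y^{\vec a}$ with $f_{\vec a}\in\mathcal{O}_{X,P}$. The condition $f\in I_i$ forces $f_{\vec a}\in(x_i)$ whenever the monomial $y^{\vec a}$ contains no $y_{i,*}$ factor. Since $x_1,\ldots,x_k$ form part of a regular system of parameters at $P$, these conditions combine into $f_{\vec a}\in(\prod_{i\in T_{\vec a}}x_i)$, where $T_{\vec a}\subseteq\{1,\ldots,k\}$ is the set of indices $i$ with no $y_{i,*}$ factor in $y^{\vec a}$. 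Pulling an $x_i$ out of the coefficient for each $i\in T_{\vec a}$ and one factor $y_{i,j}$ out of the monomial for each $i\notin T_{\vec a}$ exhibits $f_{\vec a}y^{\vec a}$ as a manifest element of $I_1\cdots I_k$. This yields a decomposition $\omega=\sum_l A_l\prod_{i=1}^k z_{l,i}$ with each $z_{l,i}\in\{x_i,d_1x_i,\ldots,d_mx_i\}$ and $A_l\in HS^m_{\mathcal{O}_{X,P}/\mathbb{C}}$.

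Now apply $v_{\phi,Q,r}$ term by term. If $z_{l,i}=d_{j_{l,i}}x_i$ with $0\leq j_{l,i}\leq m$ (using the convention $d_0x_i=x_i$), then $v_{\phi,Q,r}(z_{l,i})=d_{j_{l,i}}(t^{c_i}u_i)$. Iterated Leibniz yields $d_j(t^{c_i}u_i)=\sum_{a+b=j}d_a(t^{c_i})d_b(u_i)$, and the explicit expansion
\[
d_a(t^{c_i})=\sum_{(i_1,\ldots,i_{c_i})\in\mathbb{Z}_{\geq 0}^{c_i}:\,\sum i_s=a}\prod_{s=1}^{c_i}d_{i_s}t
\]
shows that every basis coefficient of $d_a(t^{c_i})$ has $t$-valuation at least $c_i-a$: each multi-index $(i_1,\ldots,i_{c_i})$ can have at most $a$ nonzero entries (each nonzero entry being $\geq 1$), so at least $c_i-a$ factors are $d_0t=t$, contributing $t^{c_i-a}$ to the corresponding basis coefficient (the bound $a\leq m<c_i$ ensures $c_i-a\geq 1$). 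Since $u_i$ is a unit, $d_b(u_i)$ is regular at $Q$, so each $v(z_{l,i})$ has every basis coefficient of valuation at least $c_i-j_{l,i}\geq c_i-m$. Multiplying the $k$ factors gives basis coefficients of valuation at least $\sum_{i=1}^k(c_i-m)=s$, while $v(A_l)$ remains regular at $Q$; summing over $l$ establishes the required bound for $v_{\phi,Q,r}(\omega)$. The main obstacle in this plan is the ideal identity $\bigcap_i I_i=I_1\cdots I_k$: the strict normal crossings hypothesis is precisely what furnishes a regular system of parameters aligned with the $D_i$, so that distinct $I_i$ are generated by disjoint data and the explicit ``one factor per ideal'' extraction succeeds.
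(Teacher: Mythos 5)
Your argument is correct and follows essentially the same route as the paper's proof: you expand $\omega$ in the monomial basis of $(HS^m_{\mathcal{O}_{X,P}/\mathbb{C}})_r$ attached to coordinates adapted to the normal crossings divisor, use $\omega$-integrality of each $D_i$ to force divisibility of the coefficients of the monomials lacking $d_*x_i$ factors (your ideal identity $\bigcap_i I_i=I_1\cdots I_k$ is exactly the paper's termwise statement that every term is a multiple of $d_{s_j}(x_{j,P})$ with $s_j\in\{0,\ldots,m\}$ for each $j$), and then pull back and estimate $t$-valuations via the Leibniz rule (the paper's Lemma \ref{dife}). The only cosmetic difference is the packaging as an intersection-equals-product statement for the ideals $I_i$, which you justify by the same monomial-by-monomial extraction the paper performs directly.
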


We will need some additional results. The following lemma is Corollary 3.84 in \cite{thesis}:

\begin{lemma}\label{ewi}
Let $D$ be a Cartier divisor on a smooth variety $X$ of the form $nD'$, with $D'$ prime and $n>0$. Let $Y$ be the associated subscheme of $D$, let $P\in Y$, let $\mathcal{F}$ be a locally free sheaf on $X$, and let $s\in H^0(X,\mathcal{F})$. If the image of $s_P\in\mathcal{F}_P$ by the map $\mathcal{F}_P\to (\mathcal{F}\otimes i_{Y*}\mathcal{O}_Y)_P$ is zero, then $s$ vanishes identically along $D$.
\end{lemma}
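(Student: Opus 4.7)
The plan is to show that $s$ lies stalkwise in the subsheaf $\mathcal{I}_Y\mathcal{F}=\mathcal{F}(-D)$ of $\mathcal{F}$, which is equivalent to the assertion that $s$ maps to zero in $H^0(X,\mathcal{F}\otimes i_{Y*}\mathcal{O}_Y)=H^0(X,\mathcal{F}/\mathcal{I}_Y\mathcal{F})$. Since $D=nD'$ with $D'$ prime, we have $\mathcal{I}_Y=\mathcal{O}_X(-D)=\mathcal{I}_{D'}^{n}$, so locally $\mathcal{I}_Y$ is generated by $f^{n}$ where $f$ is a local equation of $D'$. My approach is to climb the filtration
\[
\mathcal{F}\supseteq\mathcal{F}(-D')\supseteq\mathcal{F}(-2D')\supseteq\cdots\supseteq\mathcal{F}(-nD')=\mathcal{I}_Y\mathcal{F}
\]
one step at a time, proving by induction on $i$ that $s\in H^0(X,\mathcal{F}(-iD'))$ for each $i=0,1,\ldots,n$.

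The base case $i=0$ is trivial. For the inductive step, assume $s\in H^0(X,\mathcal{F}(-iD'))$ with $i<n$, and consider the short exact sequence
\[
0\to\mathcal{F}(-(i+1)D')\to\mathcal{F}(-iD')\xrightarrow{\pi_i}\mathcal{F}(-iD')|_{D'}\to 0
\]
obtained by tensoring the structure sequence of $D'\subset X$ with the locally free sheaf $\mathcal{F}(-iD')$. I need to show that $\pi_i(s)=0$ as a global section. At the distinguished point $P$, the hypothesis gives $s_P\in f_P^{n}\mathcal{F}_P\subseteq f_P^{i+1}\mathcal{F}_P=\mathcal{F}(-(i+1)D')_P$, so $\pi_i(s)_P=0$. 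Now $D'$ is smooth, hence integral, and $\mathcal{F}(-iD')|_{D'}$ is locally free on $D'$, hence torsion-free. The key observation is that for any torsion-free coherent sheaf $\mathcal{G}$ on an integral scheme $D'$, each stalk $\mathcal{G}_Q$ injects into the generic stalk $\mathcal{G}_\eta$ (since $\mathcal{G}_\eta=\mathcal{G}_Q\otimes_{\mathcal{O}_{D',Q}}K(D')$ and torsion-free modules over a domain inject into their fraction-field localization); therefore a global section whose image in one stalk vanishes must map to zero in $\mathcal{G}_\eta$ and hence vanish in every stalk. Applying this to $\pi_i(s)$ yields $\pi_i(s)=0$ globally, so by exactness $s\in H^0(X,\mathcal{F}(-(i+1)D'))$, completing the induction.

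Taking $i=n$ gives $s\in H^0(X,\mathcal{F}(-nD'))=H^0(X,\mathcal{I}_Y\mathcal{F})$, which says exactly that $s$ maps to zero in $H^0(X,\mathcal{F}\otimes i_{Y*}\mathcal{O}_Y)$, i.e., that $s$ vanishes identically along $D$. The only delicate input is the torsion-free / integral scheme lemma used above, which is standard. The hypothesis that $D'$ is \emph{prime} is used precisely to ensure that $Y^{\mathrm{red}}=D'$ is integral, so that the lemma applies at each step of the induction; if $D'$ were reducible, sections could fail to propagate along different components. I do not anticipate any other technical obstacle.
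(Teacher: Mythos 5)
The paper does not actually prove this lemma: it is quoted verbatim as Corollary 3.84 of the author's thesis \cite{thesis}, so there is no in-paper argument to compare yours against. Judged on its own, your proof is correct and complete. The filtration $\mathcal{F}\supseteq\mathcal{F}(-D')\supseteq\cdots\supseteq\mathcal{F}(-nD')=\mathcal{I}_Y\mathcal{F}$ with locally free graded pieces $\mathcal{F}(-iD')|_{D'}$ on the integral scheme $D'$, together with the observation that every stalk of a torsion-free coherent sheaf on an integral scheme injects into the generic stalk, is exactly the right mechanism for propagating the single-stalk vanishing hypothesis to global vanishing along the non-reduced scheme $Y$; the left-exactness of global sections then lets you descend one step of the filtration at a time. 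Two small points to tidy up: first, $D'$ is not assumed smooth, only prime --- but prime already means integral, so your appeal to integrality is justified, just not via smoothness; second, you should note explicitly that $P$ lies in $D'$ because $Y$ and $D'$ share the same support, so the stalk computation $s_P\in f_P^n\mathcal{F}_P$ does take place at a point of $D'$ as required for each inductive step. Neither affects the validity of the argument.
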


\begin{lemma}\label{dife}
Let $x,y\in B$, and let $n\geq i$ be nonnegative integers. We have that $d_i(x^ny)$ is a multiple of $x^{n-i}$.
\end{lemma}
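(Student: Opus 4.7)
The plan is to unwind the higher-Leibniz rule from Definition \ref{hs}(c) iteratively and count how many of the resulting factors are forced to be $d_0(x)=x$.

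First I would prove, by induction on $n$, the iterated higher-Leibniz identity
\begin{equation*}
d_i(z_1z_2\cdots z_{n+1}) = \sum_{\substack{j_1,\ldots,j_{n+1}\geq 0 \\ j_1+\cdots+j_{n+1}=i}} d_{j_1}(z_1)\,d_{j_2}(z_2)\cdots d_{j_{n+1}}(z_{n+1})
\end{equation*}
in $HS^m_{B/A}$ (or in any $B$-algebra receiving a higher derivation). The base case $n=1$ is exactly Definition \ref{hs}(c); for the inductive step one groups $z_1(z_2\cdots z_{n+1})$, applies the base Leibniz rule, and then expands each $d_b(z_2\cdots z_{n+1})$ using the inductive hypothesis. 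Note that, because in the higher-Leibniz rule the multi-index coefficients are all $1$, no multinomial coefficients appear.

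Next I would specialize $z_1=\cdots=z_n=x$ and $z_{n+1}=y$, obtaining
\begin{equation*}
d_i(x^n y) = \sum_{\substack{j_1,\ldots,j_{n+1}\geq 0 \\ j_1+\cdots+j_{n+1}=i}} d_{j_1}(x)\,d_{j_2}(x)\cdots d_{j_n}(x)\,d_{j_{n+1}}(y).
\end{equation*}
In each summand, since the $n+1$ nonnegative integers $j_1,\ldots,j_{n+1}$ have sum $i$, at most $i$ of the first $n$ indices $j_1,\ldots,j_n$ can be strictly positive; hence at least $n-i$ of them equal $0$. Recalling the convention $d_0(x)=x$, each summand therefore contains at least $n-i$ factors equal to $x$, so it is divisible by $x^{n-i}$. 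Since this holds summand by summand, $x^{n-i}$ divides $d_i(x^ny)$, as claimed.

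There is no real obstacle here — the whole content is the combinatorial bookkeeping on the indices in the iterated Leibniz expansion — so I would keep the writeup short: a one-line induction for the iterated Leibniz rule and a pigeonhole remark to conclude.
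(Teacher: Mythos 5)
Your proof is correct and rests on the same mechanism as the paper's: iterating the higher Leibniz rule $d_k(xy)=\sum_{i+j=k}d_i(x)d_j(y)$ and observing that enough factors $d_0(x)=x$ survive. The paper merely sketches this as an induction on $n$ (displaying one instance of the expansion and saying ``easily proved by induction''), whereas you make it fully explicit via the closed-form multinomial expansion and a pigeonhole count of zero indices; your writeup is the more complete of the two.
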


\begin{proof} 
By the Leibniz rule in Definition \ref{hs} we have $d_2(x^n)=(xd_2x+(n-1)(dx)^2)x^{n-2}+xd_2x^{n-1}$. The result is easily proved by induction.
\end{proof}

\begin{lemma}\label{transv} 
Let $k\leq n$ be positive integers, let $X$ be a smooth variety of dimension $n$, and let $P\in X$ be a point.
If codimension one subvarieties $D_1,\ldots,D_k$ of $X$ intersect transversally at $P$ and $u_1,\ldots,u_k$ are their respective local equations in a neighbourhood of $P$, then $u_{1,P},\ldots,u_{k,P}$ form part of a system of local parameters of $X$ at $P$.
\end{lemma}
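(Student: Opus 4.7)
The plan is to reduce this to a standard fact about regular local rings: a set of $n$ elements in the maximal ideal of a regular local ring of dimension $n$ is a system of parameters if and only if its images form a basis of $m/m^2$. Since $X$ is smooth of dimension $n$ at $P$, the local ring $\mathcal{O}_{X,P}$ is regular of dimension $n$, so $\dim_{k(P)}(m_{X,P}/m_{X,P}^2)=n$.

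First I would invoke the definition of strict normal crossings (equivalently, the definition of transverse intersection used in the paper): this says precisely that the classes $\bar{u}_{1,P},\ldots,\bar{u}_{k,P}\in m_{X,P}/m_{X,P}^2$ are linearly independent over $k(P)$. Since $k\leq n=\dim_{k(P)}(m_{X,P}/m_{X,P}^2)$, I can extend this independent set to a basis $\bar{u}_{1,P},\ldots,\bar{u}_{k,P},\bar{v}_{k+1},\ldots,\bar{v}_n$ of $m_{X,P}/m_{X,P}^2$ by standard linear algebra, and lift each $\bar{v}_j$ to an element $v_j\in m_{X,P}$.

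Finally, I would quote the basic characterization of regular systems of parameters (e.g.\ Matsumura, \emph{Commutative Ring Theory}, Thm.\ 14.2, or Atiyah–Macdonald §11): in a Noetherian regular local ring $(R,m)$ of dimension $n$, any $n$ elements of $m$ whose residues form a $k(P)$-basis of $m/m^2$ constitute a system of parameters (in particular, they generate $m$ by Nakayama). Applying this with $R=\mathcal{O}_{X,P}$ and the $n$-tuple $(u_{1,P},\ldots,u_{k,P},v_{k+1},\ldots,v_n)$ shows that this is a regular system of local parameters of $X$ at $P$, and so $u_{1,P},\ldots,u_{k,P}$ form part of one, as claimed.

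There is no real obstacle here: the statement is essentially a restatement of the definition of transverse intersection in the language of regular systems of parameters, and the only non-trivial input is the standard equivalence between systems of parameters and $m/m^2$-bases in a regular local ring, which is a classical result that can simply be cited.
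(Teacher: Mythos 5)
Your proposal is correct and follows essentially the same route as the paper: transversality gives linear independence of the classes $\bar{u}_{j,P}$ in $m_{X,P}/m_{X,P}^2$, and smoothness at $P$ (so that this space has dimension $n\geq k$) lets one complete them to a basis, hence to a regular system of parameters. You merely spell out the standard regular-local-ring characterization that the paper leaves implicit.
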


\begin{proof} 
Since the $D_j$ intersect transversally, the images of $u_{j,P}$ in $m_{X,P}/m^2_{X,P}$ are linearly independent. As the number of $u_{j,P}$ is less than or equal to $n$ and $X$ is smooth at $P$, the $\bar{u}_{j,P}\in m_{X,P}/m^2_{X,P}$ form part of a basis.
\end{proof}

\begin{lemma}\label{li}
Let $x_1,\ldots,x_n\in m_{X,P}$ be a system of local parameters of a smooth variety $X$ at a point $P$ and let $r>0$ be an integer. For each choice of $i_{jk}\in\mathbb{Z}_{>0}$ with $j\leq m$, $k\leq n$ satisfying $\sum_{\substack{j=1,\ldots,m\\ k=1,\ldots,n}} j\cdot i_{jk}=r$, consider $$\prod_{\substack{j=1,\ldots,m\\ k=1,\ldots,n}} d_{j}(x_{k})^{i_{jk}}\in (HS^m_{\mathcal{O}_{X,P}/\mathbb{C}})_r.$$
These elements form a basis of $(HS^m_{\mathcal{O}_{X,P}/\mathbb{C}})_r$ over $\mathcal{O}_{X,P}$. 
\end{lemma}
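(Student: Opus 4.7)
My plan is to prove the spanning and linear independence statements separately, both leveraging the smoothness of $X$ at $P$ to reduce to the polynomial-ring situation.

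For generation, I would start from the observation that $HS^m_{\mathcal{O}_{X,P}/\mathbb{C}}$ is generated as an $\mathcal{O}_{X,P}$-algebra by the symbols $d_i(f)$ with $f\in\mathcal{O}_{X,P}$ and $1\leq i\leq m$, so it is enough to rewrite each $d_i(f)$ as a polynomial in the $d_j(x_k)$ with coefficients in $\mathcal{O}_{X,P}$. Since the $x_k$ form a system of local parameters at the smooth point $P$, any $f\in\mathcal{O}_{X,P}$ can be written as $f=P_f(x_1,\ldots,x_n)+g$ with $P_f$ a polynomial of degree at most $m$ and $g\in m_{X,P}^{m+1}$. Iterated application of the Leibniz rule (Definition \ref{hs}) expresses $d_i(P_f)$ explicitly as a Fa\`a di Bruno-style polynomial in the $d_j(x_k)$ with $j\leq i$, while repeated use of Lemma \ref{dife} shows that $d_i(g)$ lies in the $\mathcal{O}_{X,P}$-span of products of the $d_j(x_k)$, since every term in the expansion carries enough surplus factors from $m_{X,P}$. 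Restricting to the weight-$r$ homogeneous piece then yields the spanning statement.

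For linear independence, I would use the universal property (Proposition \ref{difv}) to construct enough higher derivations to separate the candidate monomials. For any tuple $(a_{jk})\in\mathbb{C}^{mn}$, the assignment $x_k\mapsto\sum_{j=1}^m a_{jk}t^j$ extends, via the isomorphism $\hat{\mathcal{O}}_{X,P}\cong\mathbb{C}[[x_1,\ldots,x_n]]$ and the nilpotence of $t$ in $\mathbb{C}[t]/(t^{m+1})$, to a $\mathbb{C}$-algebra homomorphism $\mathcal{O}_{X,P}\to\mathbb{C}[t]/(t^{m+1})$. Extracting coefficients of $t^i$ gives a higher derivation $(D_0,\ldots,D_m):\mathcal{O}_{X,P}\to\mathbb{C}$ whose induced algebra homomorphism from Proposition \ref{difv} sends $\prod_{j,k} d_j(x_k)^{i_{jk}}\mapsto\prod_{j,k} a_{jk}^{i_{jk}}$ and a coefficient $c_\alpha\in\mathcal{O}_{X,P}$ to its value $c_\alpha(P)$. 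A putative relation $\sum_\alpha c_\alpha M_\alpha=0$ then yields the polynomial identity $\sum_\alpha c_\alpha(P)\prod a_{jk}^{i_{jk,\alpha}}=0$ in $\mathbb{C}[a_{jk}]$, and since distinct exponent tuples produce distinct monomials in the $a_{jk}$, we force $c_\alpha\in m_{X,P}$ for every $\alpha$. To upgrade this to $c_\alpha=0$, I would repeat the construction with target $\hat{\mathcal{O}}_{X,P}[t]/(t^{m+1})$ via $x_k\mapsto x_k+\sum_j a_{jk}t^j$, obtaining derivations valued in $\hat{\mathcal{O}}_{X,P}$ that show the $c_\alpha$ vanish in $\hat{\mathcal{O}}_{X,P}$, hence in $\mathcal{O}_{X,P}$ by the injectivity of $\mathcal{O}_{X,P}\to\hat{\mathcal{O}}_{X,P}$.

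The main obstacle is the bookkeeping in the linear independence step, specifically the passage from "vanishing at $P$" to "vanishing in $\mathcal{O}_{X,P}$", which requires carefully tracking the universal property when the target of the higher derivation is no longer a field. Once that is handled, both halves become essentially formal: the generation step is a controlled Leibniz expansion, and the independence step reduces to the observation that the correspondence in Proposition \ref{difv} realises the $a_{jk}$ as genuinely free parameters.
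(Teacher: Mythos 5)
Your linear-independence argument is correct and essentially self-contained: extracting $t$-coefficients of the substitution $x_k\mapsto x_k+\sum_{j}a_{jk}t^j$ into $\hat{\mathcal{O}}_{X,P}[t]/(t^{m+1})$ does produce a higher derivation over $\mathbb{C}$ whose associated homomorphism from Proposition \ref{difv} sends $\prod_{j,k} d_j(x_k)^{i_{jk}}$ to $\prod_{j,k} a_{jk}^{i_{jk}}$ and a coefficient $c_\alpha$ to its image in $\hat{\mathcal{O}}_{X,P}$; since distinct exponent matrices give distinct monomials in the $a_{jk}$ and $\mathbb{C}$ is infinite, all $c_\alpha$ vanish in $\hat{\mathcal{O}}_{X,P}$ and hence in $\mathcal{O}_{X,P}$. (The second construction already subsumes the first, so the intermediate step ``vanishing at $P$'' is not needed.) This is a genuinely different route from the paper, which disposes of the whole lemma in one line by comparing completions and quoting the polynomial case from Proposition 5.1 of \cite{V2}; your version has the merit of being explicit about where smoothness enters.

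The spanning half, however, has a gap. Writing $f=P_f(x_1,\ldots,x_n)+g$ with $g\in m_{X,P}^{m+1}$ and expanding $d_i(g)$ by the Leibniz rule, each term does retain an undifferentiated factor lying in $m_{X,P}$, but it also retains factors of the form $d_{i_0}(c_\alpha)$ with $i_0\geq 1$, where $c_\alpha\in\mathcal{O}_{X,P}$ is a coefficient in an expression $g=\sum_\alpha c_\alpha x^\alpha$, $|\alpha|=m+1$. Such a factor is a Hasse--Schmidt differential of a \emph{general} element of $\mathcal{O}_{X,P}$, which is exactly what you are trying to rewrite in terms of the $d_j(x_k)$, and iterating the decomposition does not terminate. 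What the Leibniz expansion together with Lemma \ref{dife} actually yields is $d_i(f)\in N+m_{X,P}\cdot(HS^m_{\mathcal{O}_{X,P}/\mathbb{C}})_i$, where $N$ denotes the $\mathcal{O}_{X,P}$-subalgebra generated by the $d_j(x_k)$ --- not $d_i(f)\in N$. To close the argument you need an additional input: either Nakayama's lemma, using that $(HS^m_{\mathcal{O}_{X,P}/\mathbb{C}})_r$ is a finitely generated module over the Noetherian local ring $\mathcal{O}_{X,P}$ (which in turn uses that $HS^m$ commutes with localization and is a finitely generated algebra), or a Krull-intersection argument pushing the error term into $\bigcap_{s}m_{X,P}^{s}(HS^m_{\mathcal{O}_{X,P}/\mathbb{C}})_r=0$. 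Either fix is routine, but neither is a ``controlled Leibniz expansion'', and without one the spanning claim is not proved.
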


\begin{proof} 
Upon comparing completions at $P$, this reduces to the polynomial case covered in Proposition 5.1 of \cite{V2}. 
\end{proof}

\begin{proof}[Proof of Theorem \ref{dosvar}] 
Let $Z$ be the subscheme of $C$ associated to $(c_1+\cdots+c_k-km)Q$. Since $C$ is smooth, we have that $\phi^*\mathcal{L}\otimes (HS^m_{C})_r$ is locally free, hence by Lemma \ref{ewi} it is enough to prove that the image of $(\phi^\bullet_{r,\mathcal{L}}\omega)_{Q}$ under the map $$(\phi^*\mathcal{L}\otimes (HS^m_{C})_r)_{Q}\to (\phi^*\mathcal{L}\otimes (HS^m_{C})_r \otimes i_{Z*}\mathcal{O}_Z)_{Q}$$ is zero. 

Let $\tilde{P}_i\in \tilde{D}_i$ be the unique preimage of $P$ by the normalization map $\nu_{D_i}\colon \tilde{D}_i\to X$. Let $t_Q\in m_{C,Q}$ be a local parameter of $C$ at $Q$. Since $D$ is a strict normal crossings divisor, by Lemma \ref{transv} there is a system of local parameters $x_{1,P},x_{2,P},\cdots,x_{n,P}$ of $X$ at $P$ coming from rational functions $x_1,\ldots,x_n$ such that for each $i\leq k$, $x_i$ is a local equation for $D_i$ at $P$. Furthermore, $\ker(\nu^\#_{D_i,\tilde{P}_i})=(x_{i,P})$, and $\nu^\#_{D_i,\tilde{P}_i}(x_{j,P})$ with $j\neq i$ gives us a system of local parameters of $\tilde{D}_i$ at $\tilde{P}_i$. In particular $\nu^\#_{D_i,\tilde{P}_i}(x_{j,P})\neq 0$ when $j\neq i$.

Define the set of $m\times n$ matrices 
$$W=\left\{\textbf{h}: \textbf{h}_{pq}\in\mathbb{Z}_{\geq0}, \sum_{\substack{p=1,\ldots,m\\ q=1,\ldots,n}}\ell_1\textbf{h}_{pq}=r \right\}.$$
The entries of $\textbf{h}\in W$ are denoted by $\textbf{h}_{pq}$ for $1\leq p\leq m$, $1\leq q\leq n$. For $j=1,\ldots,n$ define the set $$A_j=\left\{\textbf{h}\in W: \textbf{h}_{1j}=\textbf{h}_{2j}=\cdots=\textbf{h}_{mj}=0\right\}.$$

Fix an isomorphism $\mathcal{L}_P\cong \mathcal{O}_{X,P}$. Since $P$ is a smooth point of $X$ and $x_{1,P},\ldots, x_{n,P}$ are local parameters at $P$, the image of $\omega$ by $\mu_{X,P}$ (defined before Lemma \ref{32}) in $(HS^m_{\mathcal{O}_{X,P}/\mathbb{C}})_r$ is of the form 
$$\omega_P=\sum_{\textbf{h}\in W} a_{\textbf{h}} \prod_{\substack{p=1,\ldots,m\\ q=1,\ldots,n}} d_{p}(x_{q,P})^{\textbf{h}_{pq}}$$

Now let $1\leq j\leq k$. Since $D_j$ is $\omega$-integral, by Lemma \ref{32} we know that $v_{\nu_{D_j},\tilde{P}_j,r}(\omega_P)=0$, thus 
\begin{eqnarray*}
0=v_{\nu_{D_j},\tilde{P}_j,r}(\omega_P)&=& \sum_{\textbf{h}\in {W}} \nu^\#_{D_j,\tilde{P}_j}(a_{\textbf{h}}) \prod_{\substack{p=1,\ldots,m\\ q=1,\ldots,n}} d_{p}\left(\nu^\#_{D_j,\tilde{P}_j}(x_{q,P})\right)^{\textbf{h}_{pq}}\\
&=& \sum_{\substack{\textbf{h}\in A_j}} \nu^\#_{D_j,\tilde{P}_j}(a_{\textbf{h}}) \prod_{\substack{p=1,\ldots,m\\ q=1,\ldots,n}} d_{p}\left(\nu^\#_{D_j,\tilde{P}_j}(x_{q,P})\right)^{\textbf{h}_{pq}},
\end{eqnarray*}
where the last equality follows as $\ker(\nu^\#_{D_j,\tilde{P}_j})=(x_{j,P})$. Since $\nu^\#_{D_j,\tilde{P}_j}(x_{i,P})$ with $j\neq i$ are a system of local parameters for $\tilde{D}_j$ at $\tilde{P}_j$, we have from this computation and Lemma \ref{li} applied on $\tilde{D}_j$ at $\tilde{P}_j$, that $\nu_{D_j,\tilde{P}_j}^\#(a_{\textbf{h}})=0$ for each $\textbf{h}\in A_j$. Therefore, for any $\textbf{h}\in A_j$ we obtain $a_{\textbf{h}}=g_{\textbf{h}}x_{j,P}$ for some $g_{\textbf{h}}\in\mathcal{O}_{X,P}$.

For $\textbf{h}\notin A_j$, at least one of the terms $\textbf{h}_{1j},\ldots ,\textbf{h}_{mj}$ is not zero, say $\textbf{h}_{sj}$. Therefore the term $a_{\textbf{h}} \prod_{\substack{p=1,\ldots,m\\ q=2,\ldots,n}} d_{p}(x_{q,P})^{\textbf{h}_{pq}}$ is a multiple of $d_s(x_{j,P})$. 

We conclude that for every $j\in\{1,\ldots,k\}$, each term  $a_{\textbf{h}} \prod_{\substack{p=1,\ldots,m\\ q=1,\ldots,n}} d_{p}(x_{q,P})^{\textbf{h}_{pq}}$ is a multiple of $d_{s_j}(x_{j,P})$ for some $s_j\in\{0,1,\ldots,m\}$, noting that we can take $s_j=0$ if $\textbf{h}\in A_j$. 

Since $\mathrm{ord}_Q(\phi^*E_{D_j,P})=c_j$, we have $\phi^\#_Q(x_{j,P})=u_jt_Q^{c_j}$ with $u_j\in\mathcal{O}_{C,Q}$ a unit. Therefore, in the ring $HS^m_{\mathcal{O}_{C,Q}/\mathbb{C}}$ each element $\phi_Q^\# (a_{\textbf{h}}) \prod_{\substack{p=1,\ldots,m\\ q=1,\ldots,n}} d_{p}(\phi_Q^\#(x_{q,P}))^{\textbf{h}_{pq}}$ is divisible by $\prod_{j=1}^k d_{s_j}(u_it^{c_j}_Q)$, which by Lemma \ref{dife} is divisible by $t_Q^{(c_1-m)+\cdots+(c_k-m)}$.
From Lemma \ref{32} we have
$$\mu_{C,Q}(\phi^\bullet_{r,\mathcal{L}}(\omega)) =v_{\phi,Q,r}(\omega_P)
= \sum_{\textbf{h}\in \overline{W}} \phi^\#_Q(a_{\textbf{h}}) \prod_{\substack{p=1,\ldots,m\\ q=1,\ldots,n}} d_{p}(\phi_P^\#(x_{q,P}))^{\textbf{h}_{pq}}
= t_Q^{c_1+\cdots+c_k-km}g,$$
for certain $g\in(HS^m_{\mathcal{O}_{C,Q}/\mathbb{C}})_r$.
Thus $(\phi^\bullet_{r,\mathcal{L}}\omega)_Q$ is a multiple of $t_Q^{c_1+\cdots +c_n-km}$. Therefore the image of $(\phi^\bullet_{r,\mathcal{L}}\omega)_{Q}$ in $(\phi^*\mathcal{L}\otimes (HS^m_C)_r\otimes i_{Z*}\mathcal{O}_Z)_Q$ is zero, as we wanted.
\end{proof}


\section{Proof of Theorem \ref{main} and a general height estimate}\label{5}

From now on, all varieties will be projective. In this section we prove Theorem \ref{main} and a version of Noguchi-Wang's theorem with variable truncation able to consider other families of hypersurfaces. We now briefly recall some definitions analogous to the ones in Nevanlinna theory:

\begin{definition}\label{hf}
Given a variety $X$, an invertible sheaf $\mathcal{L}$ on $X$, and a smooth curve $C$, the \emph{height function} is the map
\begin{eqnarray*}
h_{X,\mathcal{L}}\colon\mathrm{Mor}_{\mathrm{Var}}(C,X)&\to&\mathbb{Z}\\
\phi\quad&\mapsto&\deg_C(\phi^*\mathcal{L}).
\end{eqnarray*}
\end{definition}

We denote the height of $\phi$ by $h(\phi)$ if $X$ a projective space $\mathbb{P}^n$ and $\mathcal{L}\cong\mathcal{O}_X(1)$.

Given a smooth variety $X$, a nonconstant morphism $\phi\colon C\to X$ from a smooth curve $C$, and a divisor $D$ on $X$ such that $\phi(C)$ is not contained in $\mathrm{supp}(D)$, from the definition of the degree of a Cartier divisor on a curve it follows that $$h_{X,\mathcal{O}(D)}(\phi)=\sum_{Q\in C}\mathrm{ord}_Q(\phi^*E_{D,\phi(Q)}).$$

\begin{proposition}\label{eio}
Let $C$ be a smooth curve, let $X$ be a smooth variety, and $\phi\colon C\to X$ a nonconstant morphism. Let $\tilde{\phi}\colon C\to\widetilde{\phi(C)}$ be the lift of $\phi$ by the normalization map $\nu_{\phi(C)}\colon\widetilde{\phi(C)}\to X$. Let $Q\in C$, $P=\phi(Q)$, and $\tilde{P}=\tilde{\phi}(Q)$. If $D$ is a divisor on $X$ whose support does not contain $\phi(C)$,
then  
\begin{eqnarray}\label{ints}
\mathrm{ord}_Q(\phi^*E_{D,P})=e_{Q/\tilde{P}}(\tilde{\phi})\cdot \mathrm{ord}_{\tilde{P}}(\nu_{\phi(C)}^*E_{D,P}),
\end{eqnarray}
where $e_{Q/\tilde{P}}(\tilde{\phi})$ is the ramification index of $\tilde{\phi}$ at $Q$.
\end{proposition}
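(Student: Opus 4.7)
The plan is to use the universal property of normalization to factor $\phi$ through $\widetilde{\phi(C)}$ and then apply the standard valuation-theoretic behavior of pullback along a morphism of smooth curves.

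First, since $\phi(C)$ is a curve in $X$, its normalization $\widetilde{\phi(C)}$ is smooth, and the universal property of normalization produces a morphism $\tilde\phi:C\to\widetilde{\phi(C)}$ of smooth curves with $\phi=\nu_{\phi(C)}\circ\tilde\phi$. This factorization immediately gives $\phi^*E_{D,\phi(Q)}=\tilde\phi^*\bigl(\nu_{\phi(C)}^*E_{D,P}\bigr)$ as elements of $\mathcal{O}_{C,Q}$, after possibly shrinking to a neighborhood of $P$ on which $E_{D,P}$ is regular.

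Next, I would work locally at the DVRs $\mathcal{O}_{\widetilde{\phi(C)},\tilde P}$ and $\mathcal{O}_{C,Q}$. Because $\phi(C)\not\subseteq\mathrm{supp}(D)$ and $\nu_{\phi(C)}$ is birational onto $\phi(C)$, the element $\nu_{\phi(C)}^*E_{D,P}$ is a nonzero element of $\mathcal{O}_{\widetilde{\phi(C)},\tilde P}$. Set $n=\mathrm{ord}_{\tilde P}(\nu_{\phi(C)}^*E_{D,P})$, pick a uniformizer $u$ at $\tilde P$, and write $\nu_{\phi(C)}^*E_{D,P}=u^n w$ with $w\in\mathcal{O}_{\widetilde{\phi(C)},\tilde P}^{\times}$. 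By definition of ramification index, if $t$ is a uniformizer at $Q$, then $\tilde\phi^\#(u)=v\cdot t^{e_{Q/\tilde P}(\tilde\phi)}$ for some unit $v\in\mathcal{O}_{C,Q}^\times$. Substituting,
$$\phi^*E_{D,P}=\tilde\phi^\#(u)^n\cdot\tilde\phi^\#(w)=v^n\,t^{n\cdot e_{Q/\tilde P}(\tilde\phi)}\,\tilde\phi^\#(w),$$
and $\tilde\phi^\#(w)$ is a unit at $Q$. Taking $\mathrm{ord}_Q$ on both sides yields \eqref{ints}.

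There is essentially no hard step: the argument is a formal unwinding of definitions, reducing to the standard multiplicativity of orders along a morphism of DVRs. The only point requiring minor care is verifying that $\nu_{\phi(C)}^*E_{D,P}$ is nonzero as a germ at $\tilde P$, which follows from the hypothesis $\phi(C)\not\subseteq\mathrm{supp}(D)$ together with the fact that $\nu_{\phi(C)}$ is a dominant map onto $\phi(C)$; without this, the formula would be vacuous since the right-hand side would not be a well-defined integer.
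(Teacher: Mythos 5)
Your proof is correct and follows essentially the same route as the paper's: factor $\phi=\nu_{\phi(C)}\circ\tilde\phi$, so that $\phi^*E_{D,P}=\tilde\phi^*\nu_{\phi(C)}^*E_{D,P}$, and then read off the order via local parameters and the definition of the ramification index. You simply spell out the DVR computation that the paper leaves implicit, and your remark on the nonvanishing of $\nu_{\phi(C)}^*E_{D,P}$ is a sensible (if minor) addition.
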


\begin{proof} 
Let $t$ be a local parameter of $\widetilde{\phi(C)}$ at $\tilde{P}$, and $s$ a local parameter of $C$ at $Q$. Recall that $e_{Q/\tilde{P}}(\tilde{\phi})$ is the exponent of $s$ in $\tilde{\phi}^*t$. Since $\phi^*E_{D,P}=\tilde{\phi}^*\nu_{\phi(C)}^*E_{D,P}$, we obtain Equation \eqref{ints}.
\end{proof}

With notation as in Proposition \ref{eio}, the usual intersection multiplicity of $\phi(C)$ and $D$ at a point $P\in \phi(C)$ can be written in terms of vanishing orders $$i_P(\phi(C),D)=\sum_{\tilde{P}|P}\mathrm{ord}_{\tilde{P}}(\nu_{\phi(C)}^*E_{D,P}).$$

\begin{definition}\label{tcn}
Given a smooth variety $X$, a smooth curve $C$, and a nonconstant morphism $\phi\colon C\to X$, for a prime divisor $D$ on $X$ such that $\phi(C)\not\subseteq \mathrm{supp}(D)$ the \emph{$n$-truncated counting function relative to $D$} is 
$$N^{(n)}(D,\phi)=\sum_{Q\in C}\min\{n,\mathrm{ord}_Q(\phi^*E_{D,\phi(Q)})\},$$ with $E_{D,\phi(Q)}$ a local equation of $D$ at $\phi(Q)$.
\end{definition}

\begin{proof}[Proof of Theorem \ref{main}:] 
Let $\phi(C)$ not contained in $\mathrm{supp}(D)$. By Theorem \ref{dosvar}, we know that $\phi^\bullet_{r,\mathcal{L}}\omega$ vanishes identically along $$S=\sum_{Q\in C}\sum_{i=1}^q\max\{\mathrm{ord}_Q(\phi^*E_{D_i,\phi(Q)})-m,0\}Q.$$

 For the sake of a contradiction, assume $\phi^\bullet_{r,\mathcal{L}}\omega\neq 0$.
With $Y_S$ the associated subscheme of $\mathcal{O}_C(-S)$ and $\mathcal{F}=\phi^*\mathcal{L}\otimes (HS^m_{C})_r$, from the exact sequence
$$0\to H^0(C,\mathcal{O}(-S)\otimes \mathcal{F})\to H^0(C,\mathcal{F})\to H^0(C,\mathcal{F}\otimes i_{S*}\mathcal{O}_{Y_S})$$
we obtain that there is a nonzero section $\omega_0\in H^0(C,\mathcal{O}(-S)\otimes \phi^*\mathcal{L}\otimes (HS^m_{C})_r)$ mapping to $\phi^\bullet_{r,\mathcal{L}}\omega$.
Adding $h_{X,\mathcal{O}(D)}(\phi)=\sum_{Q\in C}\sum_{i=1}^q\mathrm{ord}_Q(\phi^*E_{D_i,\phi(Q)})$ to the negative of Inequality \eqref{lades} gives us
\begin{eqnarray*}\label{smtaj}
\deg_C(\phi^*\mathcal{L})<\sum_{Q\in C}\sum_{i=1}^q\max\left\{\mathrm{ord}_{Q}(\phi^*E_{D_i,\phi(Q)})-m,0\right\}+2r\min\{0,1-g(C)\},
\end{eqnarray*}
thus by Proposition \ref{corogen} we obtain a contradiction. Therefore $\phi^\bullet_{r,\mathcal{L}}\omega$ must be zero. 

Write $C':=\widetilde{\phi(C)}$. Let $\nu_{\phi(C)}\colon C'\to X$ be the normalization of $\phi(C)$ and let $\tilde{\phi}\colon C\to C'$ be the lift of $\phi$. By Lemma \ref{comp}, since $\phi^\bullet_{r,\mathcal{L}}\omega=0$ we have that $\tilde{\phi}^\bullet_{r,\nu_{\phi(C)}^*\mathcal{L}} (\nu_{\phi(C)})^\bullet_{r,\mathcal{L}}\omega=0$. We now prove that $\tilde{\phi}^\bullet_{r,\nu^*_{\phi(C)}}$ is injective.
Since $C$ and $C'$ are smooth curves, the sheaves $(HS^m_{C'})_r$ and $(HS^m_C)_r$ are locally free. Hence $\mathrm{ker}(\rho^{\tilde{\phi}}_{(HS^m_{C'})_r})$ and $\mathrm{ker}(\tilde{\phi}^{C/C'})$ are locally free.

Since $\tilde{\phi}$ is a nonconstant morphism of curves, by checking at the generic point of $C$ we have that its rank is zero. Thus $$H^0(C',\nu_{\phi(C)}^*\mathcal{L}\otimes  (HS^m_{C'})_r)\to H^0(C,\tilde{\phi}^*(\nu_{\phi(C)}^*\mathcal{L}\otimes (HS^m_{C'})_r))\to H^0(C,\tilde{\phi}^*\nu_{\phi(C)}^*\mathcal{L}\otimes \tilde{\phi}^*(HS^m_{C'})_r)$$ is injective.

The sheaf $\mathrm{ker}(\tilde{\phi}^{C/C'})$ has rank zero by applying Proposition 5.9 of \cite{V2} to an open set where $\tilde{\phi}:C\to C'$ is \'etale. Since $\tilde{\phi}^*\nu_{\phi(C)}^*\mathcal{L}$ is invertible, we obtain that
$$H^0(C,\tilde{\phi}^*\nu_{\phi(C)}^*\mathcal{L}\otimes \tilde{\phi}^*(HS^m_{C'})_r)\to H^0(C,\tilde{\phi}^*\nu_{\phi(C)}^*\mathcal{L}\otimes (HS^m_{C})_r)$$
is also injective. Therefore $\tilde{\phi}^\bullet_{r,\nu_{\phi(C)}^*\mathcal{L}}$ is injective, thus $\phi(C)$ is an $\omega$-integral curve.
\end{proof}

The following result shows us that we can recover a version of Noguchi-Wang's theorem from Theorem \ref{main} in the case of surfaces. A similar approach using Wronskians proves the higher dimensional case. 

\begin{corollary}\label{NWcor} 
Let $C$ be a smooth curve. Let $\phi\colon C\to \mathbb{P}^2$ be a nonconstant morphism. Consider $L_1,\ldots,L_q$ to be lines in $\mathbb{P}^2$, no three meeting at a point. If $\phi(C)$ is not a line, then
\begin{equation}\label{ineq2}
(q-3)h(\phi)\leq \sum_{i=1}^q N^{(2)}(D_i,\phi)+6\max\{0,g(C)-1\}.
\end{equation}
\end{corollary}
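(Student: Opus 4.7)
The plan is to deduce this from Theorem \ref{main} by taking $\omega$ to be a Wronskian on $\mathbb{P}^2$, with parameters $m=2$, $r=3$ and $\mathcal{L}=\mathcal{O}_{\mathbb{P}^2}(3)$. Fix three linearly independent linear forms $f_1,f_2,f_3\in H^0(\mathbb{P}^2,\mathcal{O}(1))$ (e.g., the homogeneous coordinates) and form
\[
\omega = \det\begin{pmatrix} f_1 & f_2 & f_3 \\ d_1 f_1 & d_1 f_2 & d_1 f_3 \\ d_2 f_1 & d_2 f_2 & d_2 f_3 \end{pmatrix} \in H^0\bigl(\mathbb{P}^2,\mathcal{O}(3)\otimes (HS^2_{\mathbb{P}^2})_3\bigr),
\]
where the twist by $\mathcal{O}(3)$ accounts for the three rows living in $\mathcal{O}(1)$ and the graded degrees $0+1+2=3$ give the $(HS^2)_3$ factor. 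By expanding in an affine chart one checks that $\omega$ is a nonzero global section.

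First I would verify the hypotheses of Theorem \ref{main}. Every line $L_i$ is $\omega$-integral: on $L_i$ the restrictions $f_j|_{L_i}$ lie in the two-dimensional space $H^0(L_i,\mathcal{O}_{L_i}(1))$ and hence satisfy a nontrivial $\mathbb{C}$-linear relation; applying $d_0,d_1,d_2$ to this relation makes the three columns of $\omega|_{L_i}$ linearly dependent, so the determinant vanishes. The divisor $D=\sum_i L_i$ is strict normal crossings because the $L_i$ are smooth, pairwise transverse and no three are concurrent, and $\phi(C)\not\subseteq\mathrm{supp}(D)$ because $\phi(C)$ is not a line.

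The argument now proceeds by contrapositive. Suppose \eqref{ineq2} failed, i.e.\
\[
(q-3)h(\phi) > \sum_{i=1}^q N^{(2)}(L_i,\phi) + 6\max\{0,g(C)-1\}.
\]
Since $h_{\mathbb{P}^2,\mathcal{O}(D)}(\phi)=q\,h(\phi)$, $h_{\mathbb{P}^2,\mathcal{L}}(\phi)=3\,h(\phi)$ and $2r=6$, this is precisely the strict inequality \eqref{lades} of Theorem \ref{main}; so $\phi(C)$ must be $\omega$-integral. To conclude, I would show this forces $\phi(C)$ to be a line, contradicting the hypothesis. The vanishing of the pullback of $\omega$ to the normalization $\nu\colon\widetilde{\phi(C)}\to\mathbb{P}^2$ translates, after locally trivializing $\nu^*\mathcal{O}(1)$, into the vanishing of the classical Wronskian of $\nu^*f_1,\nu^*f_2,\nu^*f_3$ on $\widetilde{\phi(C)}$; in characteristic zero (using Proposition \ref{difv} to identify $d_1,d_2$ with an ordinary derivative and half its square in local coordinates) this yields a nontrivial $\mathbb{C}$-linear relation $\sum c_j f_j=0$ on $\phi(C)$, whence the irreducible curve $\phi(C)$ coincides with the line $\{\sum c_j f_j=0\}$. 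The only real technical point is this last translation between the Hasse--Schmidt and classical Wronskians, but it is a standard consequence of the characteristic-zero theory.
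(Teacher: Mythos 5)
Your proposal is correct and follows the same global strategy as the paper: take $\omega$ to be the order-$2$, degree-$3$ Wronskian section in $H^0(\mathbb{P}^2,\mathcal{O}(3)\otimes(HS^2_{\mathbb{P}^2})_3)$ (your determinant, expanded in the chart $\{f_3\neq 0\}$, is exactly the paper's local form $dx_1d_2x_0-dx_0d_2x_1$ up to sign and coordinates), check that the lines are $\omega$-integral and form a strict normal crossings divisor, and then read \eqref{lades} as the negation of \eqref{ineq2} so that Theorem \ref{main} forces $\phi(C)$ to be $\omega$-integral, hence a line. The one place where you genuinely diverge is the identification of the $\omega$-integral curves as lines: the paper parametrizes a branch of such a curve as $x_0=t$, $x_1=f(t)$ in the completed local ring and computes that the pullback of $\omega$ is $\tfrac12 f''(t)(dt)^3$, forcing $f''=0$; you instead invoke the classical characteristic-zero Wronskian criterion to extract a linear relation $\sum c_j f_j=0$ on $\phi(C)$. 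Both are valid, and the technical point you flag is indeed routine: on a smooth curve with local parameter $t$ one has $d_1g=g'\,d_1t$ and $d_2g=g'\,d_2t+\tfrac12 g''(d_1t)^2$, so the $d_2t$-components of the third row are proportional to the second row and the Hasse--Schmidt Wronskian collapses to $\tfrac12\det(g_i,g_i',g_i'')\,(d_1t)^3$; thus its vanishing is equivalent to the vanishing of the classical Wronskian, which in characteristic zero gives linear dependence over $\mathbb{C}$. Your route is arguably cleaner in that it also explains \emph{why} lines are $\omega$-integral (column dependence in the determinant) rather than asserting it, while the paper's explicit branch computation has the advantage of being self-contained and of illustrating the local machinery of Section \ref{4} that is used elsewhere.
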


\begin{proof} 
Consider $\omega\in H^0(\mathbb{P}^2,\mathcal{O}(3)\otimes (HS^2_{\mathbb{P}^2})_3)$ which locally looks like $dx_1d_2x_0-dx_0d_2x_1$ in the affine chart $\{x_2\neq 0\}$. Let $C'\subseteq\mathbb{P}^2$ be an $\omega$-integral curve intersecting the affine chart $\{x_2\neq 0\}$, and consider $V:=C'\cap\{x_2\neq 0\}\subseteq\mathbb{A}^2$. We will prove that $V$ is a line.

Assume that $V$ is not a vertical line. Let $P\in V$ be a smooth point such that the tangent of $V$ at $P$ is not vertical, that is, is not $x_0=0$. Let $\nu_V:\tilde{V}\to \mathbb{A}^2$ be its normalization. Since $V$ is smooth at $P$, the map $\nu$ is an isomorphism near $P$, and $\nu_{V,P}^\#:\mathcal{O}_{\mathbb{A}^2,P}\to\mathcal{O}_{\tilde{V},\tilde{P}}$ is also an isomorphism.

Let $t$ be a local parameter of $\tilde{V}$ at $\tilde{P}$. Since $V\subseteq\mathbb{A}^2$ is smooth at $P$ and its tangent is non-vertical, there exists $f(t)\in t\cdot k[[t]]$ such that $\nu_V$ induces 
\begin{eqnarray*}
\widehat{\nu_V}:k[[x_0,x_1]]\cong\hat{\mathcal{O}}_{\mathbb{A}^2,P}&\to& k[[t]]\cong \hat{\mathcal{O}}_{\tilde{V},\tilde{P}}\\
x_0&\mapsto&t\\
x_1&\mapsto&f(t).
\end{eqnarray*}
Write $f(t)=a_1t+a_2t^2+\cdots$. Consider the $t$-adic completion $(HS^2_{\tilde{V},\tilde{P}})_3\to (HS^2_{\tilde{V},\tilde{P}})_3\otimes\hat{\mathcal{O}}_{\tilde{V},\tilde{P}}\cong k[[t]](dt)^3\otimes k[[t]]dtd_2t=:M$. The image of $\nu_V^\bullet\omega$ in $M$ is $$\hat{\omega}:=\widehat{\nu_V^\bullet\omega_P}=dtd_2f-d_2tdf=\sum_{j=1}^\infty a_j(dtd_2(t^j)-d_2tdt^j).$$
By induction one proves that $dtd_2(t^j)-d_2tdt^j=\frac{j(j-1)}{2}t^{j-2}(dt)^3=\frac{1}{2}D_2(t^n)(dt)^3$, where $D_2$ is the usual second derivative. Therefore
$$\hat{\omega}=\sum_{j=1}^\infty a_j\frac{1}{2}D_2(t^j)(dt)^3=(a_2+a_33t+a_46t^2+\cdots)(dt)^3\in M.$$
Since $C'$ is $\omega$-integral, we have $\nu_V^\bullet\omega=0$, and we obtain $a_2=0, a_3=0,\ldots$. Therefore $f(t)=a_1t$ and $V$ is a line. By checking that all lines in $\mathbb{P}^2$ are $\omega$-integral, we conclude that the $\omega$-integral curves are the lines in $\mathbb{P}^2$.
Inequality \eqref{lades} in this setting becomes
$$(q-3)h(\phi)>\sum_{i=1}^qN^{(2)}(D_i,\phi)+6\max\{0,g(C)-1\}.$$
 
If Inequality \eqref{ineq2} holds, then by Theorem \ref{main} the curve $\phi(C)$ is $\omega$-integral and hence a line.
\end{proof}


\section{Intersection between $\omega$-integral curves and the proof of Theorems \ref{vojta} and \ref{Camp}}\label{6}

To give an explicit description of the exceptional set in Theorems \ref{vojta} and \ref{Camp}, we need to understand intersections between $\omega$-integral curves in a surface. 

\subsection{The discriminant locus} In this subsection $X$ will be a smooth projective surface, $\mathcal{L}$ an invertible sheaf, $r$ a positive integer, and $\omega\in H^0(X,\mathcal{L}\otimes S^r\Omega^1_X)$ a nonzero symmetric differential. 

Given a closed point $P$ in $X$ with inclusion map $i\colon\{P\}\to X$, we can choose a generator $\gamma$ of $i^*\mathcal{L}$ and a $\mathbb{C}$-basis $\{u_1,u_2\}$ of $i^*\Omega^1_X$. With these choices, the image of $\omega(P)$ via the canonical isomorphism $$i^*(\mathcal{L}\otimes_{\mathcal{O}_X}S^r\Omega^1_X)\to i^*\mathcal{L}\otimes S^r i^*\Omega^1_X$$ is of the form $\gamma \otimes H_{\omega,P,\gamma,u_1,u_2}(u_1,u_2)$, with $H_{\omega,P,\gamma,u_1,u_2}(\textbf{x,y})\in\mathbb{C}[\textbf{x,y}]$.

\begin{definition}\label{defdisc} 
The \emph{discriminant locus} of $\omega$ is
$$\Delta(\omega)=\{P\in X: H_{\omega,P,\gamma,u_1,u_2}(\textbf{x,y})\mbox{ has repeated factors or is the zero polynomial}\}.$$
\end{definition}

\begin{lemma}
The set $\Delta(\omega)$ is well defined, that is, it depends only on $\omega$.
\end{lemma}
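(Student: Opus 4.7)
The plan is to show that under any change of the data $(\gamma, u_1, u_2)$, the polynomial $H_{\omega,P,\gamma,u_1,u_2}(\mathbf{x},\mathbf{y})$ transforms in a way that preserves both the property of having a repeated factor and the property of being identically zero. I would handle the two independent changes of data separately.

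First, I would analyze the change of trivialization of $i^*\mathcal{L}$. Since this stalk is one-dimensional over $\mathbb{C}$, any other generator has the form $\gamma' = c\gamma$ with $c \in \mathbb{C}^\times$. Reading off $\omega(P) = \gamma \otimes H(u_1,u_2) = (c\gamma) \otimes H'(u_1,u_2)$ gives $H' = c^{-1}H$, so the factorization structure of the binary form is unaffected.

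Second, I would analyze the change of basis $(u_1,u_2) \mapsto (u_1',u_2')$ of the two-dimensional $\mathbb{C}$-vector space $i^*\Omega^1_X$, given by some $A \in \mathrm{GL}_2(\mathbb{C})$. Via the canonical isomorphism $i^*(\mathcal{L} \otimes_{\mathcal{O}_X} S^r\Omega^1_X) \xrightarrow{\sim} i^*\mathcal{L} \otimes S^r i^*\Omega^1_X$, the symmetric tensor corresponding to $H$ is intrinsic (once $\gamma$ is fixed), and expressing it in the new basis amounts to the nonsingular linear substitution induced by $A^{-1}$. I would then invoke the classical transformation law for the discriminant of a binary form of degree $r$: under such a substitution the discriminant is multiplied by a nonzero power of $\det A$. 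Since ``having a repeated factor or being the zero polynomial'' is equivalent to vanishing of the discriminant, invariance follows.

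I do not expect any real obstacle here: the argument is essentially bookkeeping plus the standard transformation behavior of the discriminant. If one prefers a coordinate-free presentation, one can instead observe that $H$, modulo nonzero scalars, defines an effective divisor of degree $r$ on $\mathbb{P}((i^*\Omega^1_X)^\vee) \cong \mathbb{P}^1$ (with the convention that $H = 0$ gives no divisor), and the condition in Definition \ref{defdisc} is precisely that this divisor is non-reduced or absent, which is manifestly independent of a basis.
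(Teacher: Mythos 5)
Your proposal is correct and follows essentially the same route as the paper: it treats the change of generator of $i^*\mathcal{L}$ (a nonzero scalar multiple of $H$) and the change of basis of $i^*\Omega^1_X$ (a nonsingular linear substitution) separately, and observes that each preserves the property of having a repeated factor or being zero. Your extra justification via the transformation law of the discriminant, and the coordinate-free reformulation as a degree-$r$ divisor on $\mathbb{P}((i^*\Omega^1_X)^\vee)$, are harmless refinements of the same argument.
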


\begin{proof}
Given a closed point $P\in X$, let $H:=H_{\omega,P,\gamma,u_1,u_2}(\textbf{x,y})$ be the polynomial associated to a choice of generator $\gamma$ of $i^*\mathcal{L}$ and a basis $\{u_1,u_2\}$ of $i^*\Omega^1_X$. Choosing a different basis $\{v_1,v_2\}$ of $i^*\Omega^1_X$ only modifies the polynomial $H$ by a linear change of variables, thus $H_{\omega,P,\gamma,v_1,v_2}$ has repeated factors if and only if $H$ has repeated factors. Let $\gamma'$ be another generator of $i^*\mathcal{L}$. We have that $H_{\omega,P,\gamma',v_1,v_2}$ is a multiple by a constant of $H_{\omega,P,\gamma,v_1,v_2}$. Thus $H_{\omega,P,\gamma',v_1,v_2}$ has repeated factors if and only if $H$ has repeated factors. Therefore $\Delta(\omega)$ is well defined.
\end{proof}

We will show that intersections between $\omega$-integral curves are of the same type, unless they intersect at the $\Delta(\omega)$. First we find conditions under which $\Delta(\omega)$ is a proper Zariski-closed set.

\begin{definition} 
An \emph{enhanced trivialization for $\mathcal{L}$} is a triple $\mathscr{G}_{\mathcal{L}}=(U,\alpha,\{u,v\})$, where
\begin{enumerate}
\item $U\subseteq X$ is a nonempty affine open set and $\alpha\colon\mathcal{O}_X|_U\xrightarrow{\sim}\mathcal{L}|_U$ is an isomorphism, 
\item $\{u,v\}\subseteq\mathrm{O}_X(U)$ satisfies that for each $Q\in U$, the pair $u-u(Q),v-v(Q)$ is a set of local parameters of $X$ at $Q$.
\end{enumerate}
\end{definition}

Note that since $U$ is affine, condition (2) is equivalent to the existence of $u,v\in\mathcal{O}_X(U)$ such that $du,dv$ are a basis of $\Omega^1_X|_U=\Omega^1_X(U)$ as an $\mathcal{O}_X|_U$-module. Hence from an enhanced trivialization $\mathscr{G}=(U,\alpha,\{u,v\})$, the image of $\omega|_U$ via the isomorphism $H^0(U,\mathcal{L}\otimes S^r\Omega^1_X)\cong H^0(U,S^r\Omega^1_X)$ induced by $\alpha$ has the form 
$$\sum_{j=0}^rA_j(du)^{r-j}(dv)^j,$$
with $A_j\in \mathcal{O}_U(U)$ for each $j$. Since $\{(du)^{r-j}(dv)^j\}_{j=0}^r$ form a basis of $S^r(\Omega^1_X|_U)$, we have uniqueness for the coefficients $A_j$. 
We denote by $K_{\omega,\mathscr{G}}$ the polynomial $\sum_{j=0}^rA_j\textbf{x}^{r-j}\textbf{y}^j\in \mathcal{O}_X(U)[\textbf{x,y}]$ and we denote $\delta_{\omega,\mathscr{G}}=\mathrm{disc}(K_{\omega,\mathscr{G}})\in \mathcal{O}_X(U)$. 

\begin{lemma}\label{delvu}
Let $\mathscr{G}=(U,\alpha,\{u,v\})$ be an enhanced trivialization for $\mathcal{L}$. With $\mathbb{V}_U(\delta_{\omega,\mathscr{G}})$ the zero locus of $\delta_{\omega,\mathscr{G}}$ in $U$, we have $\Delta(\omega)\cap U=\mathbb{V}_U(\delta_{\omega,\mathscr{G}})$. 
\end{lemma}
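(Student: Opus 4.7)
The plan is to exhibit, for each closed point $P \in U$, a \emph{canonical} choice of generator $\gamma_P$ of $i^*\mathcal{L}$ and basis $\{u_1, u_2\}$ of $i^*\Omega^1_X$ coming from the enhanced trivialization, and then to verify that under these canonical choices the polynomial $H_{\omega,P,\gamma_P,u_1,u_2}$ is precisely the evaluation at $P$ of the coefficients of $K_{\omega,\mathscr{G}}$. Once this compatibility is established, the vanishing of $\delta_{\omega,\mathscr{G}}$ at $P$ (i.e.\ the vanishing of the discriminant of the binary form $K_{\omega,\mathscr{G}}(P)$) translates exactly to $H_{\omega,P,\gamma_P,u_1,u_2}$ having a repeated factor or being zero, which by definition means $P \in \Delta(\omega)$. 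The independence statement of the previous lemma guarantees that this characterization does not depend on the auxiliary choices.

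Concretely, I would proceed as follows. First, I fix $P \in U$ and let $i \colon \{P\}\to X$ be the inclusion. Setting $\gamma_P := i^*\alpha(1)$, we obtain a generator of $i^*\mathcal{L}$; setting $u_1 := i^*(du)$ and $u_2 := i^*(dv)$, we obtain a basis of $i^*\Omega^1_X$ (since by hypothesis $du, dv$ form an $\mathcal{O}_X|_U$-basis of $\Omega^1_X|_U$, the classes of $u-u(P)$ and $v-v(P)$ are a basis of $m_{X,P}/m_{X,P}^2 \cong i^*\Omega^1_X$). Second, I would check that the canonical isomorphism
\[
i^*(\mathcal{L}\otimes_{\mathcal{O}_X} S^r\Omega^1_X) \xrightarrow{\sim} i^*\mathcal{L}\otimes S^r i^*\Omega^1_X
\]
is compatible with the trivializations coming from $\alpha$ and from $\{du,dv\}$, so that if $\omega|_U$ corresponds to $\sum_{j=0}^r A_j(du)^{r-j}(dv)^j$ under the identification $H^0(U,\mathcal{L}\otimes S^r\Omega^1_X) \cong H^0(U,S^r\Omega^1_X)$ given by $\alpha$, then
\[
\omega(P) = \gamma_P \otimes \sum_{j=0}^r A_j(P)\, u_1^{r-j} u_2^{j}.
\]
This is a routine naturality check on affine opens.

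Third, invoking the definition of $H_{\omega,P,\gamma,u_1,u_2}$ (the polynomial such that $\omega(P) = \gamma \otimes H_{\omega,P,\gamma,u_1,u_2}(u_1,u_2)$), the previous identification immediately yields
\[
H_{\omega,P,\gamma_P,u_1,u_2}(\mathbf{x},\mathbf{y}) \;=\; \sum_{j=0}^r A_j(P)\,\mathbf{x}^{r-j}\mathbf{y}^j \;=\; K_{\omega,\mathscr{G}}(\mathbf{x},\mathbf{y})\big|_{P}.
\]
Fourth, I apply the standard fact that the discriminant of a binary form of degree $r$ with complex coefficients vanishes if and only if the form has a repeated linear factor or is identically zero. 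Hence $\delta_{\omega,\mathscr{G}}(P) = \mathrm{disc}(K_{\omega,\mathscr{G}})(P) = 0$ if and only if $K_{\omega,\mathscr{G}}(P)$ has a repeated factor or is zero, if and only if $H_{\omega,P,\gamma_P,u_1,u_2}$ does, which by the well-definedness lemma preceding the statement is equivalent to $P \in \Delta(\omega)$. This gives both inclusions $\Delta(\omega)\cap U \subseteq \mathbb{V}_U(\delta_{\omega,\mathscr{G}})$ and the reverse, finishing the proof.

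The only delicate step is the compatibility check in the second paragraph: one must ensure that the coefficients of the binary form obtained by pulling back $\omega|_U$ pointwise to $\{P\}$ are literally the values $A_j(P)$, as opposed to some twist by transition data. Since $\mathscr{G}$ simultaneously trivializes $\mathcal{L}$ and provides a global basis of $\Omega^1_X|_U$, no transition data intervenes, so this reduces to unwinding the universal property of pullback of tensor products of coherent sheaves; I do not anticipate any obstruction beyond care with notation.
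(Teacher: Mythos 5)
Your proposal is correct and follows essentially the same route as the paper: both fix a closed point $P\in U$, use $\alpha$ and $\{du,dv\}$ to produce the canonical generator $\gamma_P$ and basis $\{i^*du,i^*dv\}$, identify $H_{\omega,P,\gamma_P,u_1,u_2}$ with $K_{\omega,\mathscr{G}}$ evaluated at $P$, and conclude via the fact that forming the discriminant commutes with evaluating the coefficients. The compatibility check you flag as the delicate step is exactly the computation the paper carries out, so there is nothing further to add.
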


\begin{proof}
Let $P\in U$, and let $i\colon\{P\}\to U$ be the inclusion map. Let $s=\alpha^{-1}(1)\in\mathcal{L}(U)$, which is a generator of $\mathcal{L}|_{U}$, since $1$ generates $\mathcal{O}_X|_U$. Let $\gamma=i^*s$, this is a generator of $i^*\mathcal{L}$. Let $u_1=i^*du$, $u_2=i^*dv$, this is a $\mathbb{C}$-basis of $i^*\Omega^1_X$. Here $$H_{\omega,P,\gamma, u_1,u_2}=\sum_{j=0}^r i^*(A_j)\textbf{x}^{r-j}\textbf{y}^j=\sum_{j=0}^rA_j(P)\textbf{x}^{r-j}\textbf{y}^j\in\mathbb{C}[\textbf{x,y}]_{(r)},$$
with $A_i\in \mathcal{O}_X(U)$ as in the definition of $K_{\omega,\mathscr{G}}\in\mathcal{O}_X(U)[\textbf{x,y}]$.

Taking discriminants of polynomials of fixed degree respects ring morphisms on the coefficients, so $\mathrm{disc}(H_{\omega,P,\gamma,u_1,u_2})=i^*(\delta_{\omega,\mathscr{G}})=\delta_{\omega,\mathscr{G}}(P)\in\mathbb{C}$. Therefore, $\delta_{\omega,\mathscr{G}}(P)=0$ if and only if $H_{\omega,P,\gamma,u_1,u_2}$ has a repeated factor, that is, $P\in\Delta(\omega)$.
\end{proof}

\begin{definition}\label{reduced}
Let $\eta$ be the generic point of $X$. Fix an isomorphism $\epsilon\colon\mathcal{L}_\eta\xrightarrow{\sim}k(X)$ and $k(X)$-generators $df_1,df_2$ for $\Omega^1_{X,\eta}$. This induces an injective map $$H^0(X,\mathcal{L}\otimes S^r\Omega^1_X)\to\mathcal{L}_\eta\otimes S^r\Omega^1_{X,\eta}\cong S^r\Omega^1_{X,\eta}\cong k(X)[\textbf{x,y}]_{(r)}\subseteq k(X)[\textbf{x,y}].$$ We say that $\omega\in H^0(X,\mathcal{L}\otimes S^r\Omega^1_X)$ is \emph{reduced} if its image in $k(X)[\textbf{x,y}]$ is a reduced polynomial.
\end{definition}

The choice of generators $df_1,df_2$ corresponds to a $k(X)$-linear change of the variables $\textbf{x}$ and $\textbf{y}$. The choice of $\epsilon$ corresponds to multiplication by a scalar from $k(X)$. Hence the notion of $\omega$ being reduced is well-defined.

\begin{corollary}\label{66}
The set $\Delta(\omega)$ is Zariski-closed in $X$. Furthermore, $\omega$ is reduced if and only if $\Delta(\omega)$ is a proper Zariski-closed set.
\end{corollary}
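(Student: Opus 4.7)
The plan has two parts, both feeding off Lemma \ref{delvu}. First, to verify that $\Delta(\omega)$ is Zariski-closed, I would show that every point $P\in X$ admits an enhanced trivialization $\mathscr{G}=(U,\alpha,\{u,v\})$ of $\mathcal{L}$ with $P\in U$. Local triviality of $\mathcal{L}$ gives an affine open $V\ni P$ on which $\mathcal{L}$ is trivial, and smoothness of $X$ at $P$ yields regular functions $u,v$ on some affine $U\subseteq V$ whose differentials $du,dv$ freely generate $\Omega^1_X|_U$; this is precisely the condition that $u-u(Q),v-v(Q)$ form a system of local parameters at every $Q\in U$. Lemma \ref{delvu} then identifies $\Delta(\omega)\cap U$ with the zero locus of the regular function $\delta_{\omega,\mathscr{G}}\in\mathcal{O}_X(U)$, so it is closed in $U$. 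Covering $X$ by such enhanced trivializations shows that $\Delta(\omega)$ is Zariski-closed in $X$.

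For the second part, I would fix one enhanced trivialization $\mathscr{G}=(U,\alpha,\{u,v\})$ on a non-empty affine open $U\subseteq X$. The isomorphism $\alpha$ induces a trivialization $\epsilon\colon\mathcal{L}_\eta\xrightarrow{\sim}k(X)$ at the generic point $\eta$, and $du,dv$ induce a $k(X)$-basis of $\Omega^1_{X,\eta}$. With these choices for Definition \ref{reduced}, the image of $\omega$ in $k(X)[\textbf{x},\textbf{y}]$ is precisely $K_{\omega,\mathscr{G}}$, viewed there via the inclusion $\mathcal{O}_X(U)\hookrightarrow k(X)$ coming from irreducibility of $X$. Since $K_{\omega,\mathscr{G}}$ is a binary form of degree $r$, it is reduced (squarefree and nonzero) in $k(X)[\textbf{x},\textbf{y}]$ if and only if $\delta_{\omega,\mathscr{G}}=\mathrm{disc}(K_{\omega,\mathscr{G}})$ is nonzero in $k(X)$, which in turn is equivalent to $\delta_{\omega,\mathscr{G}}$ being a nonzero element of $\mathcal{O}_X(U)$.

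Putting the pieces together, $\omega$ is reduced if and only if $\mathbb{V}_U(\delta_{\omega,\mathscr{G}})\subsetneq U$, and by Lemma \ref{delvu} this is the same as $\Delta(\omega)\cap U\subsetneq U$. Because $X$ is irreducible and $\Delta(\omega)$ is closed, $\Delta(\omega)\cap U=U$ would force $\Delta(\omega)=X$; conversely $\Delta(\omega)=X$ forces $\Delta(\omega)\cap U=U$. Hence $\omega$ is reduced if and only if $\Delta(\omega)$ is a proper Zariski-closed subset of $X$.

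The routine calculations are already encapsulated in Lemma \ref{delvu}, so the only mild obstacle is the classical statement that a binary form of degree $r$ over a field has nonzero discriminant exactly when it has no repeated linear factor (and is not the zero polynomial); this is standard and follows from the identification of the discriminant with the resultant of $\partial_{\textbf{x}}K_{\omega,\mathscr{G}}$ and $\partial_{\textbf{y}}K_{\omega,\mathscr{G}}$, so no new ideas are needed here.
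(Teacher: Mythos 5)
Your proof is correct and follows essentially the same route as the paper: cover $X$ by enhanced trivializations and invoke Lemma \ref{delvu} for closedness, then use a single trivialization (which also computes the image of $\omega$ at the generic point) to identify reducedness with the nonvanishing of $\delta_{\omega,\mathscr{G}}$ in $\mathcal{O}_X(U)$. The extra details you supply — existence of enhanced trivializations near every point and the discriminant criterion for squarefreeness of a binary form — are exactly the steps the paper leaves implicit.
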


\begin{proof}
We can cover $X$ with open sets $U$ coming from enhanced trivializations. From Lemma \ref{delvu}, we obtain that $\Delta(\omega)$ is Zariski-closed. 

Let $\mathscr{G}=(U,\alpha,\{u,v\})$ be any enhanced trivialization for $\mathcal{L}$. Since $U$ is a neighbourhood of $\eta$, we can use $\mathscr{G}$ to compute whether $\omega$ is reduced or not. We get that $\omega$ is reduced if and only if $\delta_{\omega,\mathscr{G}}\in\mathcal{O}_X(U)$ is not the zero function. 
\end{proof}


\subsection{Intersections between $\omega$-integral curves} 

Let $X$ be a smooth surface and let $P\in X$. Let $m_{X,P}$ be the maximal ideal of the local ring $\mathcal{O}_{X,P}$. Fixing a system of local parameters at $P$ we have an isomorphism $\hat{\mathcal{O}}_{X,P}\cong \mathbb{C}[[\bf{x,y}]]$, where $\hat{\mathcal{O}}_{X,P}$ is the $m_{X,P}$-adic completion of $\mathcal{O}_{X,P}$. 

For a curve $C\subseteq X$ passing through $P$, consider its associated principal prime ideal $(f)\in\mathcal{O}_{X,P}$. A \emph{branch of $C$ at $P$} is an ideal $\mathfrak{f}$ in $\hat{\mathcal{O}}_{X,P}$ satisfying $\mathfrak{f}\cap \mathcal{O}_{X,P}=(f)$.
With $\hat{m}_{X,P}$ the completion of $m_{X,P}$, we define the \emph{multiplicity of $C$ at $P$} $$\mathrm{mult}_P(C)=\sum_{\mathfrak{f}|(f)}\max\{i:\mathfrak{f}\subseteq \hat{m}^i_{X,P}\}=\max\{i:f\in m^i_{X,P}\}.$$

Since an element in $\hat{\mathcal{O}}_{X,P}$ can be a branch for at most one curve, it is useful to bound the number of branches at $P$ related to certain type of curves in order to bound the number of these curves passing through $P$. Let $\mathcal{L}$ be an invertible sheaf on $X$, let $r$ be a nonnegative integer, and let $\omega\in H^0(X,\mathcal{L}\otimes S^r\Omega^1_X)$ be a nonzero symmetric differential. By Corollary 3.76 in \cite{thesis} we have 

\begin{lemma}\label{lemth}
If $\omega$ is reduced, then for any given point $P\in X\setminus\Delta(\omega)$ there are at most $r$ $\omega$-integral curves passing through $P$. More precisely, the sum of the multiplicities $\mathrm{mult}_P(C)$ for all $\omega$-integral curves $C$ passing through $P$ is at most $r$, and they meet transversally at $P$.
\end{lemma}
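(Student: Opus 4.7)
The plan is to work locally at $P$ via an enhanced trivialization and reduce the statement to an analysis of formal branches inside $\hat{\mathcal{O}}_{X,P}$. Choose an enhanced trivialization $\mathscr{G}=(U,\alpha,\{u,v\})$ on a neighbourhood $U$ of $P$, after translating so that $u(P)=v(P)=0$; then $u,v$ are local parameters at $P$ and $\hat{\mathcal{O}}_{X,P}\cong\mathbb{C}[[u,v]]$. The image of $\omega|_U$ under $\alpha$ has the form $\sum_{j=0}^r A_j(du)^{r-j}(dv)^j$, so that $K_{\omega,\mathscr{G}}=\sum A_j\mathbf{x}^{r-j}\mathbf{y}^j$ specializes at $P$ to the binary form $H_{\omega,P}(\mathbf{x},\mathbf{y})=\sum A_j(P)\mathbf{x}^{r-j}\mathbf{y}^j$. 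By Lemma \ref{delvu}, the hypothesis $P\notin\Delta(\omega)$ translates into $\delta_{\omega,\mathscr{G}}(P)\neq 0$, i.e.\ $H_{\omega,P}$ is a nonzero degree $r$ form with $r$ pairwise distinct linear factors over $\mathbb{C}$.

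Next I would convert $\omega$-integrality into a concrete leading-order condition on branches. Any branch $\mathfrak{f}$ of an $\omega$-integral curve at $P$ admits a formal parametrization $t\mapsto(u(t),v(t))$ with $t$ a uniformizer on the corresponding component of the normalization, and $\omega$-integrality means that the pullback section vanishes, i.e.\
\[
\sum_{j=0}^r A_j(u(t),v(t))\,u'(t)^{r-j}v'(t)^j=0\quad\text{in }\mathbb{C}[[t]].
\]
Writing $u(t)=at^e+\cdots$, $v(t)=bt^e+\cdots$ with $e=\mathrm{mult}_P(\mathfrak{f})$ and $(a:b)\in\mathbb{P}^1$ the tangent direction, the leading term on the left is $e^r\,H_{\omega,P}(a,b)\,t^{r(e-1)}$, so $(a:b)$ must be a zero of $H_{\omega,P}$. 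In particular, there are at most $r$ possible tangent directions for $\omega$-integral branches at $P$.

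The crux of the argument is to show that each simple root $(a:b)$ of $H_{\omega,P}$ supports exactly one formal $\omega$-integral branch, and that this branch is smooth. After a linear change of $u,v$ I may assume the root in question is $(1:0)$, so $A_0(P)=0$ and $A_1(P)\neq 0$. Looking for a smooth branch as the graph $v=\phi(u)\in u^2\mathbb{C}[[u]]$ of a formal power series, the $\omega$-integrality equation becomes the ODE
\[
F(u,\phi(u),\phi'(u))=0,\qquad F(u,v,w):=\sum_{j=0}^r A_j(u,v)\,w^j,
\]
with $\phi(0)=0$. Since $\partial_w F(0,0,0)=A_1(P)\neq 0$, the formal implicit function theorem solves $w=G(u,v)\in\mathbb{C}[[u,v]]$ with $G(0,0)=0$ uniquely, and a Picard-type iteration in $\mathbb{C}[[u]]$ then produces a unique $\phi$ with $\phi'=G(u,\phi)$. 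To exclude non-smooth branches with tangent $(1:0)$, one parametrizes such a hypothetical branch by $u=t^e$ with $e\geq 2$ and $v(t)$ of $t$-order $>e$, substitutes into the same identity, and uses $A_1(P)\neq 0$ together with an order-by-order comparison to force a contradiction. This unique-solvability step, where the simple-root condition coming from $P\notin\Delta(\omega)$ is used essentially, is the main technical obstacle.

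Assembling the pieces: the set of $\omega$-integral branches at $P$ injects into the $r$ roots of $H_{\omega,P}$; each such branch is smooth of multiplicity one; and distinct branches have distinct tangent lines, so they meet transversally at $P$. Since $\mathrm{mult}_P(C)=\sum_{\mathfrak{f}\mid(f_C)}\mathrm{mult}_P(\mathfrak{f})$ and branches of distinct irreducible curves at $P$ cannot coincide (they would force the curves to agree), summing over all $\omega$-integral curves $C$ through $P$ gives $\sum_C\mathrm{mult}_P(C)\leq r$, as claimed.
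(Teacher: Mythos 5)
Your argument is sound and reaches the paper's conclusion, but by a genuinely different route. The paper (following Corollary 3.76 of \cite{thesis}) passes to $S^r\tilde{\Omega}^1_{\hat{\mathcal{O}}_{X,P}/\mathbb{C}}$, uses $P\notin\Delta(\omega)$ to see that the linear factors of $\tilde{\omega}_P(0,0)$ are pairwise non-proportional, applies Hensel's Lemma to factor $\tilde{\omega}_P$ into $r$ linear forms $B_j\,d\textbf{x}+C_j\,d\textbf{y}$ with $(B_j(0,0),C_j(0,0))\neq(0,0)$, and then invokes Seidenberg's theorem \cite{S} on the first-order equation $A\,dy=B\,dx$: each factor has exactly one solution branch, that branch is smooth, and the tangents are pairwise distinct. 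Since $\mathbb{C}[[t]]$ is a domain, an integral branch of the product is an integral branch of some factor, so non-smooth branches are excluded for free. You instead keep the degree-$r$ form intact, extract the tangent constraint $H_{\omega,P}(a,b)=0$ from the leading coefficient, and at each simple root use the formal implicit function theorem (via $A_1(P)\neq 0$ after normalizing the root to $(1{:}0)$) to reduce to the explicit first-order ODE $\phi'=G(u,\phi)$, solved uniquely by formal Picard iteration; this is essentially a dehomogenized version of the Hensel step and has the merit of avoiding the citation of Seidenberg. The one imprecise point is your exclusion of non-smooth branches: substituting $u=t^e$, $e\geq 2$, into $v'=u'\,G(u,v)$ does \emph{not} produce an order-by-order contradiction, because the equation does have a solution, namely $v=\psi(t^e)$ with $\psi$ your smooth solution, and the order comparison only shows this solution is unique. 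The contradiction must instead come from primitivity: the resulting parametrization $(t^e,\psi(t^e))$ factors through $t\mapsto t^e$ and defines the same branch ideal $(v-\psi(u))$, which is smooth, contradicting $\mathrm{mult}_P(\mathfrak{f})=e\geq 2$. With that repair the assembly is correct: each of the $r$ distinct roots of $H_{\omega,P}$ carries at most one $\omega$-integral branch, each branch is smooth, branches of distinct irreducible curves cannot coincide, and distinct tangents give transversality, whence $\sum_C\mathrm{mult}_P(C)\leq r$.
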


This result is also implicit in the proof of Lemma 2.7 in \cite{V}, and we give a sketch of proof here: Given $P\in X\setminus\Delta(\omega)$, we choose an enhanced trivialization $(U,\alpha,\{u,v\})$ for $\mathcal{L}$ with $P\in U$. The image of $\omega|_U$ by $\alpha$ can be written as $\sum_{j=0}^rA_j(du)^{r-j}(dv)^j\in H^0(U,S^r\Omega^1_X)$, where $A_j\in \mathcal{O}_{X}(U)$.
It is mapped to $S^r\hat{\Omega}^1_{\mathcal{O}_{X,P}/\mathbb{C}}$ by localization and completion, and then (as in p.\ 138 of \cite{thesis}) to the universally finite differential algebra $S^r\tilde{\Omega}^1_{\hat{\mathcal{O}}_{X,P}/\mathbb{C}}\cong S^r\tilde{\Omega}^1_{\mathbb{C}[[\textbf{x},\textbf{y}]]/\mathbb{C}}\cong S^r(\mathbb{C}[[\textbf{x},\textbf{y}]]d\textbf{x}\oplus\mathbb{C}[[\textbf{x},\textbf{y}]]d\textbf{y})$ (see \cite{Kun86} for details), thus obtaining an expression of the form $$\tilde{\omega}_P=\sum_{j=0}^r \tilde{A}_j(d\textbf{x})^{r-j}(d\textbf{y})^j,$$ where $\tilde{A}_j\in \mathbb{C}[[\textbf{x},\textbf{y}]]$.

From Theorem 3.66 in \cite{thesis} we know that a curve $C\subseteq X$ passing through $P$ is $\omega$-integral if and only if its branches at $P$ are solutions of the differential equation $\tilde\omega_P=0$. 
Since $P$ is not in $\mathbb{V}_U(\delta_{\omega,\mathcal{G}})=\Delta(\omega)\cap U$ (cf.\ Lemma \ref{delvu}), we obtain that the linear factors of $$\tilde{\omega}_P(0,0)=\sum_{j=0}^r\tilde{A}_j(0,0)(d\textbf{x})^{r-j}(d\textbf{y})^j\in\mathbb{C}[d\textbf{x},d\textbf{y}]_{(r)}$$ are non-proportional over $\mathbb{C}$.

By applying Hensel's Lemma we obtain that $\tilde{\omega}_P$ factors into linear terms $B_jd\textbf{x}+C_jd\textbf{y}$ with $B_j,C_j\in \mathbb{C}[[\textbf{x},\textbf{y}]]$ such that for each $j$ at least one of $B_j(0,0)$ or $C_j(0,0)$ is not zero.
By Theorem 2 in \cite{S}, we obtain that each linear factor $B_jd\textbf{x}+C_jd\textbf{y}$ gives us exactly one branch solution $\mathfrak{f}_j$, and the solution branches $\mathfrak{f}_j$ have different tangents from the fact that linear factors of $\tilde{\omega}_P(0,0)$ are non-proportional. Thus for each point $P\in X$ outside of $\Delta(\omega)$, we know that there are at most $r$ $\omega$-integral curves passing through $P$, and they intersect transversally.


We now continue our study with the case of points in $\Delta(\omega)$, where the situation is quite different. 

\begin{definition}\label{defdeg}
Let $X$ be a smooth surface and $0\neq\omega\in H^0(X,\mathcal{L}\otimes S^r\Omega^1_X)$. A point $P\in X$ is \emph{degenerate for $\omega$} if there are infinitely many $\omega$-integral curves passing through $P$.
\end{definition}

Given an open set $U\subseteq X$, we denote the set of degenerate points of $\omega$ in $U$ by $\mathrm{DP}_U(\omega)$. By our discussion above, we know that $\mathrm{DP}_X(\omega)\subseteq \Delta(\omega)$.

\begin{lemma}\label{lemdp}
Let $0\neq \omega\in H^0(X,\mathcal{L}\otimes S^r\Omega^1_X)$. The set $\mathrm{DP}_X(\omega)$ is finite. Furthermore, there is a generically finite morphism of surfaces $\pi\colon Y\to X$ such that $\mathrm{DP}_Y(\pi^\bullet\omega)$ is empty.
\end{lemma}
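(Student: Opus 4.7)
The plan is, for the first assertion, to reduce to the case in which $\omega$ has no divisorial vanishing and then observe that any degenerate point must belong to the remaining zero locus of $\omega$; and for the second, to blow up iteratively and terminate by a Seidenberg-type invariant.

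\textbf{Finiteness of} $\mathrm{DP}_X(\omega)$. I would first factor $\omega$ according to the divisorial part of its zero locus, writing $\omega=s\cdot\omega'$ with $s\in H^0(X,\mathcal{O}(D_s))$ cutting out the divisorial part of the vanishing locus of $\omega$, and $\omega'\in H^0(X,\mathcal{L}(-D_s)\otimes S^r\Omega^1_X)$. By construction, the vanishing locus $V(\omega')$ then has codimension at least two, hence is a finite set of closed points. The $\omega$-integral curves through any $P\in X$ are the $\omega'$-integral curves through $P$ together with the (finitely many) irreducible components of $D_s$ containing $P$, so $\mathrm{DP}_X(\omega)=\mathrm{DP}_X(\omega')$.

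Next, I claim that $\mathrm{DP}_X(\omega')\subseteq V(\omega')$. Indeed, for any $P$ with $\omega'_P\neq 0$, the leading form $\tilde\omega'_P(0,0)\in\mathbb{C}[d\mathbf{x},d\mathbf{y}]_{(r)}$ is a non-zero homogeneous polynomial of degree $r$. Writing it as $L_1^{e_1}\cdots L_k^{e_k}$ with pairwise coprime powers of distinct linear forms $L_i$, Hensel's lemma yields a decomposition $\tilde\omega'_P=\Phi_1\cdots\Phi_k$ in the formal power series, where each $\Phi_i$ lifts $L_i^{e_i}$. By the Puiseux-expansion analysis of \cite{S}, Theorem 2 (as invoked after Lemma \ref{lemth}), each $\Phi_i=0$ has at most $e_i$ branch solutions through $P$, giving at most $r$ $\omega'$-integral curves through $P$ in total. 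Hence $P\notin\mathrm{DP}_X(\omega')$, so $\mathrm{DP}_X(\omega')\subseteq V(\omega')$ is finite, and therefore so is $\mathrm{DP}_X(\omega)$.

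\textbf{Construction of} $\pi$. For the second assertion, let $\pi_1\colon Y_1\to X$ be the blow-up at the finite set $\mathrm{DP}_X(\omega)$. The first part, applied to $(Y_1,\pi_1^\bullet\omega)$, gives that $\mathrm{DP}_{Y_1}(\pi_1^\bullet\omega)$ is again finite; iterating produces a sequence $Y_n\to\cdots\to Y_1\to X$. Each $\pi_i$ is birational, so the composition $\pi\colon Y:=Y_n\to X$ is generically finite. Termination in finitely many steps is supplied by an appropriate Seidenberg-type local invariant (for instance, the order of vanishing of the strict transform of $\omega$ at its worst degenerate point) which strictly decreases under each blow-up at a degenerate center and cannot decrease indefinitely.

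\textbf{Main obstacle.} The delicate step is the termination of the blow-up procedure, which requires a numerical invariant of $(X,\omega)$ at each degenerate point that strictly drops under the blow-up at that point. For ordinary foliations ($r=1$) this is Seidenberg's classical reduction theorem; for $r\geq 2$ one is dealing with a multi-valued foliation (singular on $\Delta(\omega)$), and the invariant must be crafted to be compatible with the Hasse–Schmidt pullback operation $\pi^\bullet$ used throughout the paper.
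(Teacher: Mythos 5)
There is a genuine gap, and it occurs at the same place in both halves of your argument: you never deal with the points of $\Delta(\omega)$ at which $\omega$ does not vanish. In the finiteness step you factor the leading form as $L_1^{e_1}\cdots L_k^{e_k}$ and invoke Hensel to split $\tilde\omega'_P=\Phi_1\cdots\Phi_k$; that much is fine, but the assertion that each $\Phi_i=0$ has at most $e_i$ integral branches through $P$ ``by Seidenberg's Theorem 2'' is unjustified. Theorem 2 of \cite{S} concerns the equation $A\,dy=B\,dx$, i.e.\ degree-one factors; it is invoked in the paper (after Lemma \ref{lemth}) only for linear factors $B_j\,d\mathbf{x}+C_j\,d\mathbf{y}$ with $(B_j(0,0),C_j(0,0))\neq(0,0)$, where it produces exactly one branch. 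A factor $\Phi_i$ with $e_i\geq 2$ reduces at the origin to the repeated linear form $L_i^{e_i}$, which is precisely the case where Hensel's lemma fails to split it further into linear pieces over $\mathbb{C}[[\mathbf{x},\mathbf{y}]]$ (compare $(d\mathbf{y})^2-\mathbf{x}(d\mathbf{x})^2$), so you cannot reduce to the $r=1$ statement at such points. Consequently the inclusion $\mathrm{DP}_X(\omega')\subseteq V(\omega')$, hence the finiteness of $\mathrm{DP}_X(\omega)$, is not established: a priori degenerate points could occur anywhere on $\Delta(\omega)\setminus V(\omega')$.

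The same issue is what makes your ``main obstacle'' unresolvable as stated: for $r\geq 2$ there is no Seidenberg invariant for $\omega$ on $X$ itself, and you leave termination of the blow-up sequence unproved. The paper's device supplies exactly the missing ingredient: pass first to a generically finite cover $\tau\colon X'\to X$ whose function field splits the binary form $H(\mathbf{X},\mathbf{Y})\in k(X)[\mathbf{X},\mathbf{Y}]$ into linear factors, so that $\tau^\bullet\omega$ is generically a product $\omega_1\cdots\omega_r$ of rational $1$-forms; locally one normalizes each $\omega_j$ to have no divisorial zeros or poles, identifies $\mathrm{DP}_U(\tau^\bullet\omega)$ with $\bigcup_j\mathrm{DP}_U(h_j\omega_j)$ up to finitely many curves, and only then applies Seidenberg's Theorem 2 --- both to locate the degenerate points inside the finite zero loci of the $h_j\omega_j$ and to resolve them by blow-ups. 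Finiteness of $\mathrm{DP}_X(\omega)$ is then deduced by pushing forward along $\tau$. Note that this forces $\pi\colon Y\to X'\to X$ to be merely generically finite rather than birational, which is why the lemma is stated that way; your proposed birational $\pi$ is a sign that the cover, and with it the reduction to genuine $1$-forms, is missing.
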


\begin{proof}
Let $\eta$ be the generic point of $X$. Fix an isomorphism $\psi:\mathcal{L}_\eta\stackrel{\sim}{\to} k(X)$ and generators $df_1,df_2$ of $\Omega^1_{X,\eta}$ over $k(X)$. From these choices we obtain an injective map $$H^0(X,\mathcal{L}\otimes S^r\Omega^1_X)\to \mathcal{L}_\eta\otimes S^r\Omega^1_{X,\eta}\cong S^r\Omega^1_{X,\eta}\cong k(X)[\textbf{X,Y}]_{(r)}\subseteq k(X)[\textbf{X,Y}].$$
Denote by $H(\textbf{X,Y})\in k(X)[\textbf{X,Y}]$ the image of $\omega$ by this map. 
Let $F/k(X)$ be a finite extension such that the homogeneous degree $r$ polynomial $H(\textbf{X,Y})$ factors into linear terms over $F$. 
Let $X'$ be a smooth projective surface with generic point $\eta'$, and $\tau\colon X'\to X$ a generically finite morphism of surfaces with $k(X')=F$ and $\tau$ inducing the inclusion $k(X)\hookrightarrow F$. Then the image of $\tau^\bullet\omega\in H^0(X',\tau^*\mathcal{L}\otimes S^r\Omega^1_{X'})$ in $S^r\Omega^1_{X',\eta'}$ (via the map induced by $\psi$) factors as $\omega_1\cdots \omega_r$, with $\omega_j\in \Omega^1_{X',\eta'}$.

Given a point $Q\in X'$, we can choose an open neighborhood $U$ of $Q$ and rational functions $h_1,\ldots,h_r\in F^\times$ such that $h_1\omega_1,\ldots,h_r\omega_r$ define regular sections of $\Omega^1_{X'}$ on $U$, with the property that they have no zeroes or poles along curves in $U$, thus for each $j$ we have $h_j\omega_j\in H^0(U,\Omega^1_{X'})$ and its zero locus in $U$ will consist of at most finitely many points.

Each $h_j$ has finitely many curves that are zeroes or poles of it, so given any other point $Q'\in U$, the set of $\tau^\bullet\omega$-integral curves through $Q'$ is the same, up to finitely many curves, as the union of the sets of $h_j\omega_j$-integral curves through $Q'$, for $j=1,\ldots,r$. Therefore $\mathrm{DP}_U(\tau^\bullet\omega)=\cup_{j=1}^r\mathrm{DP}_U(h_j\omega_j)$. 

From Theorem 2 in \cite{S}, we know that each $\mathrm{DP}_U(h_j\omega_j)$ is contained in the zero locus of $h_j\omega_j$ on $U$, and that these degenerate points can be algorithmically resolved after a sequence of blow-ups (cf.\ \cite{S}).

Covering $X'$ with finitely many open sets of this type, this argument shows that $DP_{X'}(\omega)$ consists of finitely many points.
Finally, by a local analysis as in Theorem 3.35 in \cite{thesis}, we have that $$\mathrm{DP}_X(\omega)\subseteq \tau (DP_{X'}(\tau^\bullet\omega))\cup\tau(\mbox{contracted fibres of }\tau:X'\to X),$$ thus $DP_X(\omega)$ is finite. The map $\pi\colon Y\to X$ is obtained upon a sequence of blow-ups on $X'$.
\end{proof}

\begin{proof}[Proof of Theorem \ref{vojta}] 
From Lemmas \ref{lemth} and \ref{lemdp}, we know that the set $Z_{\omega,D}$ is the union of finitely many $\omega$-integral curves, hence $Z_{\omega,D}$ is a proper Zariski-closed set.
Let $\phi\colon C\to X$ be a nonconstant morphism with $\phi(C)\not\subseteq Z_{\omega,D}$ for which Inequality \eqref{1k} fails. Let $a,b\in\mathbb{Z}_{>0}$ be such that $$q\epsilon\geq \frac{a}{b}\geq\max_{1\leq i\leq q}\sigma(\mathcal{O}(D_i),\mathcal{L}).$$
Then $\mathcal{O}(D)^{\otimes a}\otimes(\mathcal{L}^\vee)^{\otimes bq}\cong \bigotimes_{i=1}^q (\mathcal{O}(D_i)^{\otimes a}\otimes (\mathcal{L}^\vee)^{\otimes b})$ is ample.
From this we have $$0 <\frac{1}{bq}(ah_{\mathcal{O}(D)}(\phi)-bqh_\mathcal{L}(\phi))\leq \epsilon h_{\mathcal{O}(D)}(\phi)-h_\mathcal{L}(\phi).$$ Since Inequality \eqref{1k} fails for $\phi$, we obtain
$$h_{X,\mathcal{O}(D)}(\phi)-h_{X,\mathcal{L}}(\phi)\geq (1-\epsilon)h_{X,\mathcal{O}(D)}(\phi)>\sum_{j=1}^qN^{(1)}(D_j,\phi)+2r\max\{0,g(\phi(C))-1\},$$
hence $\phi(C)$ is an $\omega$-integral curve by Theorem \ref{main}.

Recall that $\phi(C)\not\subseteq Z_{\omega,D}$. By Lemma \ref{lemth}, we have that for each $i$ the intersections between $\phi(C)$ and $D_i$ can only be transversal and, moreover, for each point there is at most one $i$ for which $i_{\phi(Q)}(\phi(C),D_j)$ is different from $0$. By this and Proposition \ref{eio}, we obtain 
\begin{eqnarray*}
h_{X,\mathcal{O}(D)}(\phi)-\sum_{j=1}^qN^{(1)}(D_j,\phi) &=& \sum_{Q\in C}\sum_{j=1}^q \mathrm{ord}_Q(\phi^*E_{D_j,\phi(Q)})-\sum_{Q\in C}\sum_{j=1}^q \min\{1,\mathrm{ord}_Q(\phi^*E_{D_j,\phi(Q)})\}\cr
&=&\sum_{Q\in C}\sum_{j=1}^q \max\{\mathrm{ord}_Q(\phi^*E_{D_j,\phi(Q)})-1,0\}\cr
&=& \sum_{Q\in C}\sum_{j=1}^q \max\{i_{\phi(Q)}(\phi(C),D_j)\cdot e_{Q/\phi(Q)}(\phi)-1,0\}\cr
&\leq& \sum_{Q\in C} e_{Q/\phi(Q)}-1.
\end{eqnarray*}
Since Inequality \eqref{1k} fails for $\phi$, we can apply the Riemann-Hurwitz formula to $\tilde{\phi}:C\to \widetilde{\phi(C)}$ to get
$$\epsilon h_{X,\mathcal{O}(D)}(\phi)+2r\max\{0,g(C)-1\}\leq \sum_{Q\in C}(e_{Q/\tilde{\phi}(Q)}(\tilde{\phi})-1)=2g(C)-2-(2g(\widetilde{\phi(C)})-2)\deg(\tilde{\phi}).$$
As each $D_i$ is ample, we have 
$$h_{\mathcal{O}(D)}(\phi)=\deg(\tilde{\phi})h_{\mathcal{O}(D)}(\nu)=\deg(\tilde{\phi})\sum_{j=1}^q(\phi(C).D_j)_X\geq q\deg(\tilde{\phi}),$$ 
hence
$$(\epsilon q+2g(\tilde{\phi(C)})-2)\deg(\tilde{\phi})\leq 2(g(C)-1)-2r\max\{0,g(C)-1\}.$$
Since $\epsilon q>2$ and $g(\widetilde{\phi(C)})\geq 0$, we get $g(C)-1> r\max\{0,g(C)-1\}\geq g(C)-1$ which is not possible. Therefore Inequality \eqref{1k} must hold for $\phi$.
\end{proof}

\begin{proof}[Proof of Theorem \ref{Camp}] Since $-D_\mathcal{L}+(D-D_{m\overline{\epsilon}})$ is big, there is $M\geq1$, an ample line sheaf $\mathcal{A}$ and an effective divisor $E\geq 0$ such that
$$\mathcal{O}(M(-D_\mathcal{L}+D-D_{m\overline{\epsilon}})) \simeq \mathcal{A}\otimes \mathcal{O}(E).$$
For any curve $C\subseteq X$ with image not contained in $Z:=\Delta(\omega)\cup\mathrm{supp}(E)\cup\mathrm{supp}(D)$ we have
$$h_{\mathcal{A}}(\nu_C)\leq M\cdot \left(-h_{\mathcal{L}}(\nu_C) + h_{\mathcal{O}(D)}(\nu_C) - \sum_{j=1}^q m\epsilon_j h_{\mathcal{O}(D_j)}(\nu_C) \right)$$ because $E$ is effective.
Let $C\subseteq X$ be an $(X,D_{\overline{\epsilon}})$-Campana curve with image not contained in $Z$. The $(X,D_{\overline{\epsilon}})$-Campana condition gives us $m\epsilon_j h_{\mathcal{O}(D_j)}(\nu_C)\geq mN^{(1)}(D_j,\nu_C)\geq N^{(m)}(D_j,\nu_C) $ for each $j$. If $C$ is not $\omega$-integral, from Theorem \ref{main} we have
$$h_{\mathcal{O}(D)}(\nu_C)-h_{\mathcal{L}}(\nu_C) \leq \sum_{j=1}^q N^{(m)}(D_j, \nu_C) + 2r\max\{0,g(C)-1\}.$$
This leads to a contradiction with the previous estimates when $B=2Mr$. Therefore $C$ must be $\omega$-integral.
Since $C$ is an $(X,D_{\overline{\epsilon}})$-Campana curve, by Lemma \ref{lemth} it can only intersect $D$ at points in $\Delta(\omega)$, because $\frac{1}{m}>\epsilon_j$ for each $j$. Since $C$ intersects $D$, we obtain that $C\in Z_{\bar{\epsilon},\omega,D}$.

The set $Z_{\bar{\epsilon},D,\omega}$ is a proper Zariski closed subset of $X$ by Corollary \ref{66} and from the fact that $D\cap\mathrm{DP}_X(\omega)=\emptyset$.
\end{proof}


\section{Vojta's conjecture for quadratic families of lines in $\mathbb{P}^2$}\label{7}

The aim of this section is to work out in detail the case where the target divisor is formed by a quadratic family of lines in $\mathbb{P}^2$, giving a concrete description of the exceptional set. 

\begin{definition}
A \emph{quadratic family of lines in }$\mathbb{P}^2$ is a smooth conic in $(\mathbb{P}^2)^\vee$. Equivalently, by parametrization of that conic, it is a collection of lines $H_{[s:t]}: f(s,t)x+g(s,t)y+h(s,t)z=0$ in $\mathbb{P}^2$, for $[s:t]\in\mathbb{P}^1$, where $f,g,h$ are homogeneous of degree $2$, and not all the $H_{[s:t]}$ pass through the same point (this means that the corresponding degree $2$ curves in $(\mathbb{P}^2)^\vee$ is smooth).
\end{definition}

\begin{proof}[Proof of Corollary \ref{quad2}] Let $C_{[s:t]}:f(s,t)x+g(s,t)y+h(s,t)z=0$ be a quadratic family of lines, with $L_i=C_{[s_i:t_i]}$. We know $\max\{\deg(f),\deg(g),\deg(h)\}=2$, so we can write $$C_{[s:t]}:s^2\cdot a(x,y,z)+st\cdot b(x,y,z)+t^2\cdot c(x,y,z)=0,$$ with $a,b,c$ linear on $x,y,z$.
Applying an automorphism of $\mathbb{P}^2$ does not change the result, so we can assume that $C_{[s:t]}:s^2x+st y+t^2z=0$.
Here we are applying the automorphism $\alpha\colon \mathbb{P}^2\to\mathbb{P}^2$ given by $x\mapsto a(x,y,z)$, $y\mapsto b(x,y,z)$, $z\mapsto c(x,y,z)$.

Let $\omega\in H^0(\mathbb{P}^2,\mathcal{O}(4)\otimes S^2\Omega^1_{\mathbb{P}^2})$ be a symmetric differential which in $U:=\{x\neq 0\}$ looks like $$dxdx-ydxdy+xdydy.$$
It can be verified that the lines $C_{[s:t]}$ are $\omega$-integral.
We have $\Delta(\omega)\cap U=\mathbb{V}_U(y^2-4z)$, and with $V:=\{z\neq 0\}$ we have $\Delta(\omega)\cap V=\mathbb{V}_V(y^2-4x)$. Therefore $\Delta(\omega)=\{y^2-4xz\}$, which is an $\omega$-integral curve.
By Lemma \ref{lemth}, we know that at each point $P\notin \Delta(\omega)$ at most two $\omega$-integral curves intersect. Therefore the $\omega$-integral curves are $\Delta(\omega)$ and the lines $C_{[s:t]}$.

We will now compute the special set $Z_{\omega,D}$. The lines $L_i$ are trivially contained in $Z_{\omega,D}$, since every pair of lines in the quadratic family intersect. Similarly, the curve $\Delta(\omega)$ is also in $Z_{\omega,D}$, as it is the envelope of the quadratic family.
A line $C_{[s:t]}$ different from the components of $D$ is not in $Z_{\omega,D}$, by Lemma \ref{lemth} and because two lines in the quadratic family intersect outside $\Delta(\omega)$.
Therefore $Z_{\omega,D}=Z$.

Note that $\sigma(\mathcal{O}(L_i),\mathcal{O}(4))=\sigma(\mathcal{O}(1),\mathcal{O}(4))=4$, thus the condition in Theorem \ref{vojta} is $q\epsilon>4$. Applying Theorem \ref{vojta} we obtain the result.
\end{proof}



\begin{thebibliography}{9}

\bibitem[Abr09]{Abr09} D. Abramovich, \emph{Birational geometry for number theorists.} in Arithmetic geometry, Clay Mathematics Proceedings, vol. 8, American Mathematical Society, 2009, p. 335--373.

\bibitem[AV18]{AV18} D. Abramovich, A. V\'arilly-Alvarado, \emph{Campana points, Vojta's conjecture, and level structures on semistable abelian varieties.} J. Th\'eor. Nombres Bordeaux 30 (2018), no. 2, 525--532. 

\bibitem[BD19]{BD19} D. Brotbek, Y. Deng, \emph{Kobayashi hyperbolicity of the complements of general hypersurfaces of high degree.} Geom. Funct. Anal. 29 (2019), no. 3, 690--750.

\bibitem[BTV19]{BTV19} N. Bruin, J. Thomas, A. V\'arilly-Alvarado, \emph{Explicit computation of symmetric differentials and its application to quasi-hyperbolicity.} To appear in Algebra \& Number Theory. arXiv:1912.08908 

\bibitem[Cam05]{Cam05} F. Campana, \emph{Fibres multiples sur les surfaces: aspects geom\'etriques, hyperboliques et arithm\'etiques.} Manuscr. Math. 117 (2005), no. 4, p. 429--461.

\bibitem[CP21]{CP21} J. Caro, H. Pasten, \emph{A Chabauty-Coleman bound for surfaces.} arXiv:2102.01055

\bibitem[CZ04]{CZ04} P. Corvaja, U. Zannier, \emph{On integral points on surfaces.} Ann. of Math. (2), 160(2):705--726, 2004.

\bibitem[Dem97]{Dem97} J.-P. Demailly, \emph{Algebraic criteria for Kobayashi hyperbolic projective varieties and jet differentials.} Algebraic geometry--Santa Cruz 1995, 285--360, Proc. Sympos. Pure Math., 62, Part 2, Amer. Math. Soc., Providence, RI, 1997. 

\bibitem[Gar15]{thesis} N. Garcia-Fritz, \emph{Curves of low genus on surfaces and applications to Diophantine problems}. Thesis (Ph.D.)--Queen's University (Canada). 2015. \texttt{http://hdl.handle.net/1974/13545}

\bibitem[Gar18a]{Gar18} N. Garcia-Fritz, \emph{Sequences of powers with second differences equal to two and hyperbolicity.} Trans. Amer. Math. Soc. 370 (2018), no. 5, 3441--3466.

\bibitem[Gas10]{Gas10} C. Gasbarri, \emph{The strong abc conjecture over function fields (after McQuillan and Yamanoi).} S\'eminaire Bourbaki. Vol. 2007/2008. Ast\'erisque No. 326 (2009), Exp. No. 989, viii, 219-256 (2010).

\bibitem[GG80]{GG} M. Green, P. Griffiths, \emph{Two applications of algebraic geometry to entire holomorphic mappings}. The Chern Symposium 1979 (Proc. Internat. Sympos., Berkeley, Calif., 1979), pp. 41--74, Springer, New York-Berlin, 1980. 

\bibitem[EGA]{EGA} A. Grothendieck, J. Dieudonn\'e, \emph{\'El\'ements de g\'eom\'etrie alg\'ebrique}, I Grundlehren der Mathematik 166 (1971) Springer, II-IV Publ. Math. IHES 8 (1961) 11 (1961) 17 (1963) 20 (1964) 24 (1965) 28 (1966) 32 (1967). 

\bibitem[Har77]{H} R. Hartshorne, \emph{Algebraic geometry}. Graduate Texts in Mathematics, No. 52. Springer-Verlag, New York-Heidelberg, 1977.

\bibitem[Kun86]{Kun86} E. Kunz, \emph{K\"ahler differentials.} Advanced Lectures in Mathematics. Friedr. Vieweg \& Sohn, Braunschweig, 1986.

\bibitem[Nog97]{Nog97} J. Noguchi, \emph{Nevanlinna-Cartan theory over function fields and a Diophantine equation.} J. Reine Angew. Math. 487 (1997), 61--83.

\bibitem[RTW21]{RTW21} E. Rousseau, A. Turchet, J. T.-Y. Wang, \emph{Nonspecial varieties and generalised Lang-Vojta conjectures.} Forum Math. Sigma 9 (2021), Paper No. e11, 29 pp. 

\bibitem[Sei68]{S} Seidenberg, A. \emph{Reduction of singularities of the differential equation $Ady=Bdx$}. Amer. J. Math. 90 1968 248--269.

\bibitem[Voj87]{Voj87} P. Vojta, \emph{Diophantine approximations and value distribution theory.} Lecture Notes in Mathematics, 1239. Springer-Verlag, Berlin, 1987.

\bibitem[Voj91]{Voj91} P. Vojta, \emph{Diophantine approximation and Nevanlinna theory}. In J.-L. Colliot-Th\'el\`ene, K. Kato, P. Vojta, \emph{Arithmetic algebraic geometry}. Lectures from the Second C.I.M.E. Session held in Trento, June 24-July 2, 1991. Edited by E. Ballico. Lecture Notes in Mathematics, 1553. Springer-Verlag, Berlin, 1993.

\bibitem[Voj98]{Voj98} P. Vojta, \emph{A more general abc conjecture.} Internat. Math. Res. Notices 1998, no. 21, 1103-1116.

\bibitem[Voj00]{V} P. Vojta, \emph{Diagonal quadratic forms and Hilbert's tenth problem}. Hilbert's tenth problem: relations with arithmetic and algebraic geometry (Ghent, 1999), 261--274, Contemp. Math., 270, Amer. Math. Soc., Providence, RI, 2000.

\bibitem[Voj07]{V2} P. Vojta, \emph{Jets via Hasse-Schmidt derivations.} Diophantine geometry, 335--361, CRM Series, 4, Ed. Norm., Pisa, 2007.

\bibitem[Wan96]{Wan96} J. T.-Y. Wang, \emph{The truncated second main theorem of function fields.} J. Number Theory 58 (1996), no. 1, 139--157.

\bibitem[Wan04]{Wan04} J. T.-Y. Wang, \emph{An effective Schmidt subspace theorem over function fields,} Math. Z. 204 (2004), 811--844.

\bibitem[Yam04]{Yam04} K. Yamanoi, \emph{The second main theorem for small functions and related problems.} Acta Math. 192 (2004), 225--294.

\end{thebibliography}
\end{document}